\newcommand{\curl}{\mathop{\mathbf{curl}}\nolimits}
\newcommand{\di}{\mathop{\mathrm{div}}\nolimits}
\newcommand{\R}{\mathbb{R}}
\newcommand{\N}{\mathbb{N}}
\newcommand{\nd}{n_p}
\newcommand{\dd}{d}
\newcommand{\Dt}{\tau}
\newcommand{\Ds}{\kappa}
\renewcommand{\norm}[2]{\|#1\|_{#2}}
\renewcommand{\O}{\Omega}
\newcommand{\rL}{\mathrm{L}}
\newcommand{\rH}{\mathrm{H}}
\newcommand{\partialt}{d_t}
\newcommand{\partials}{d_s}
\newcommand{\bv}[1]{{\boldsymbol #1}}
\newcommand{\balpha}{\bv{\alpha}}
\newcommand{\btheta}{\bar{\theta}}
\newcommand{\wtheta}{\widetilde{\theta}}
\newcommand{\bbalpha}{\bar{\bv{\alpha}}}
\newcommand{\balphaI}{\balpha_{int}}
\newcommand{\hD}{\widehat{D}}
\newcommand{\bX}{\mathbf{X}}
\newcommand{\bx}{\boldsymbol{x}}
\newcommand{\bvarphi}{\boldsymbol{\varphi}}
\newcommand{\brho}{\boldsymbol{\rho}}
\newcommand{\hbx}{\widehat{\bx}}
\newcommand{\bvel}{\mathbf{f}}
\newcommand{\rmvel}{\mathrm{f}}
\newcommand{\hbvel}{\widehat{\rmvel}}
\newcommand{\eps}{\varepsilon}
\newcommand{\tnu}{\tilde{\nu}}
\newcommand{\calH}{\mathcal{H}}
\newcommand{\calU}{\mathcal{U}}
\newcommand{\calV}{\mathcal{V}}
\newcommand{\calJ}{\mathcal{J}}
\newcommand{\calJs}{\mathcal{F}}
\newcommand{\calC}{\mathcal{C}}
\newcommand{\Ccone}{\mathcal{A}}
\newcommand{\wbP}{\widetilde{\bv{P}}}
\newcommand{\hbP}{\widehat{\bv{P}}}
\newcommand{\bB}{\bv{B}}
\newcommand{\bF}{\bv{F}}
\newcommand{\bmag}{\bv{m}}
\newcommand{\bH}{\bv{H}}
\newcommand{\bM}{\bv{M}}
\newcommand{\mH}{\mathbb{H}}
\newcommand{\weakto}{\rightharpoonup}
\DeclareMathOperator*{\argmin}{arg\,min}
\newcommand{\pair}[1]{\left\langle #1 \right\rangle}
\newcommand{\abssec}[1]{\noindent\normalsize {\bfseries #1\quad }\ignorespaces}
\renewenvironment{abstract}{\abssec{Abstract}}{\par\vspace{.1in}}
\newenvironment{keywords}{\abssec{Key Words}}{\par\vspace{.1in}}
\newenvironment{AMSMOS}{\abssec{AMS subject
  classification}}{\par\vspace{.1in}}
\theoremstyle{plain}
\newtheorem{theorem}{Theorem}[section]
\newtheorem{lemma}[theorem]{Lemma}
\theoremstyle{definition}
\newtheorem{remark}[theorem]{Remark}
\numberwithin{equation}{section}
\begin{document}

\title{Optimizing the Kelvin force in a moving target subdomain
\thanks{The work of  H. Antil has been partially supported by NSF grants DMS-1109325 and DMS-1521590.
 R.H. Nochetto has been partially supported by NSF grants DMS-1109325 and DMS-1411808 and
  P. Venegas  has been  supported by NSF grant DMS-1411808 and CONICYT  scholarship (Chile).}
}

\author{Harbir Antil \thanks{Department of Mathematical Sciences. George Mason University, Fairfax, VA 22030, USA. Email: {\tt hantil@gmu.edu}. }
\and
Ricardo H. Nochetto \thanks{Department of Mathematics and Institute for Physical
Science and Technology, University of Maryland College Park, MD 20742, USA. Email: {\tt rhn@math.umd.edu}. }
\and
Pablo Venegas  \thanks{GIMNAP, Departamento de Matem\'atica, Universidad del B{\'\i}o B{\'\i}o, Chile. Email: {\tt pvenegas@ubiobio.cl}.}
}

\maketitle

\begin{abstract}
In order to generate a desired Kelvin (magnetic) force in a target subdomain moving along a prescribed trajectory, we propose a minimization problem with a tracking type cost functional. We use the so-called dipole approximation to
realize the magnetic field, where the location and the direction of the magnetic sources are assumed to be  fixed. The magnetic field intensity acts as the control and exhibits limiting pointwise constraints. We address two specific problems: the first one corresponds to a fixed final time whereas the second
one deals with an unknown force to minimize the final time. We prove existence of solutions and deduce local uniqueness provided that a second order sufficient condition is valid. We use the classical backward Euler scheme for time discretization. For both problems we prove the $\rH^1$-weak convergence of this semi-discrete numerical scheme. This result is motivated by $\Gamma$-convergence and does not require second order sufficient condition. If the latter holds then we prove $\rH^1$-strong local convergence. We report computational results to assess the performance of the numerical methods. As an application, we study the control of magnetic nanoparticles as those used in magnetic drug delivery, where the optimized Kelvin force is used to transport the drug to a desired location.
\end{abstract}

\begin{keywords}
Magnetic field design, Kelvin force, minimization problem, non-convex problem, $\Gamma$-convergence.
\end{keywords}

\begin{AMSMOS}
65D05,    
49J20,    
49M25,    
65M12,    
65M60    
\end{AMSMOS}


\section{Introduction}
\label{sec:magnetic}

It is well-known that the magnetic field exerts a force on magnetic materials such as magnetic
nanoparticles (MNPs).  This principle has been widely exploited.
For instance, MNPs under the action of external magnetic field are used in medical sciences:
as contrast agents  to enhance the contrast in MRI \cite{CRIP2006,SLZ2008}, as carriers for
targeted drug delivery \cite{LAB2001,MCN1963}, to treat cancer and tumor cells in magnetic
hyperthermia \cite{JWFJHF1993,MUNNWTJ2011}, in gene therapy \cite{D2006} and in magnetized
stem-cells \cite{SKL2008}, among others. The application of magnetic force is not
restricted to the medical sciences; these forces are relevant in 
magnetic tweezers \cite{DKDK2003,HJBTF2003}, lab-on-a-chip systems that include
magnetic particles or fluids \cite{GLL2010,LHPVRG2006},  magnetofection \cite{FX2012,SBRHGKHBP2005} or
separation of particles \cite{ZDVTX2013,LYL2014}, just to name a few.

To understand how a magnetic field can manipulate MNPs, we need to recall that a magnetic
field gradient is required to exert a force at a distance, such a magnetic force is given by \cite{R1997}:
\begin{equation}\label{eq:force_1}
\bF=(\bmag \cdot\nabla)\bH,
\end{equation}
where $\bmag$ is the magnetic dipole moment and $\bH$ is the magnetic field.
To address the computation of \eqref{eq:force_1} we  analyze  the
magnetic force acting on a point-like magnetic dipole.
In the case of a magnetic nanoparticle suspended in a weakly
diamagnetic  medium such as water, the total moment on the particle
can be written as $\bmag  = V_m\bM$, where $V_m$ is the volume of the
particle and $\bM$ is its volumetric magnetization.
In diamagnetic and paramagnetic materials, the relation between $\bM$ and
$\bH$ is linear, which in turn
is given by $\bM = \Delta \chi \bH$, where $\Delta \chi =  \chi_p -  \chi_m$
is the effective susceptibility, namely, the difference in magnetic susceptibility
between the magnetic particle, $\chi_p$, and its surrounding buffer or medium, $\chi_m$.
Furthermore, provided there are no time-varying electric fields
or currents in the medium, we can apply the Maxwell equation
$\curl \bH = 0$  so that equation \eqref{eq:force_1} becomes:
\begin{equation}\label{eq:F}
\bF=\dfrac{V_m\Delta\chi}{2}\nabla|\bH|^2
\end{equation}
where we have used the identity:
$\nabla(\bH \cdot \bH) = 2\bH \times (\curl \bH) + 2(\bH \cdot \nabla)\bH = 2(\bH\cdot \nabla)\bH$.
A similar expression has been considered, for instance, in a simplified version of
ferrohydrodynamics equations \cite{R1997,R2002}.

The immediate and fundamental difficulty in correlating the magnetic field with the
physically observable forces exerted on the elementary magnetic entities (for instance,
nanoparticle or ferrofluid) is that the magnetic field intensity $\bH$ is not parallel
to the magnetic force $\bF$. Instead, it  may take any direction relative to $\bF$
depending on the spatial distribution of the magnetic field sources (or field gradients).

Due to the physics of magnetic fields and forces,
the majority of magnetic application
systems have been designed to pull in or attract therapeutic particles
to target regions (see Figure~\ref{fig:push} (left)). It is, however, also possible
to use two or more magnets to  ``magnetically inject" nanoparticles
(see Figure~\ref{fig:push} (center and right)) particles \cite{SDR2010} or
to fully manipulate microrobots in wireless micromanipulation \cite{KAKBSN2010}.
\begin{figure}[!h]
\centering
\includegraphics[width=0.3\textwidth]{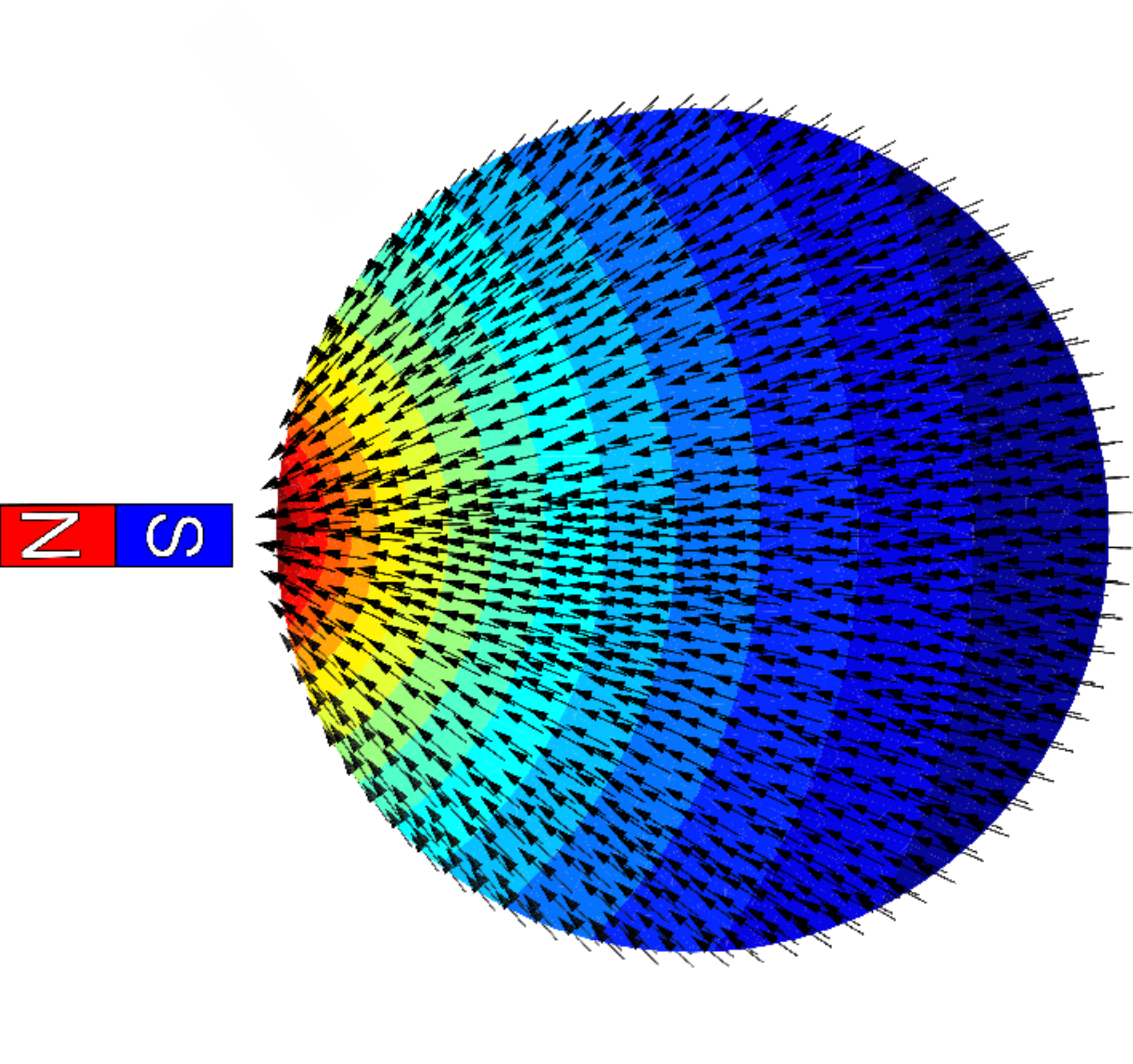}\qquad
\includegraphics[width=0.25\textwidth]{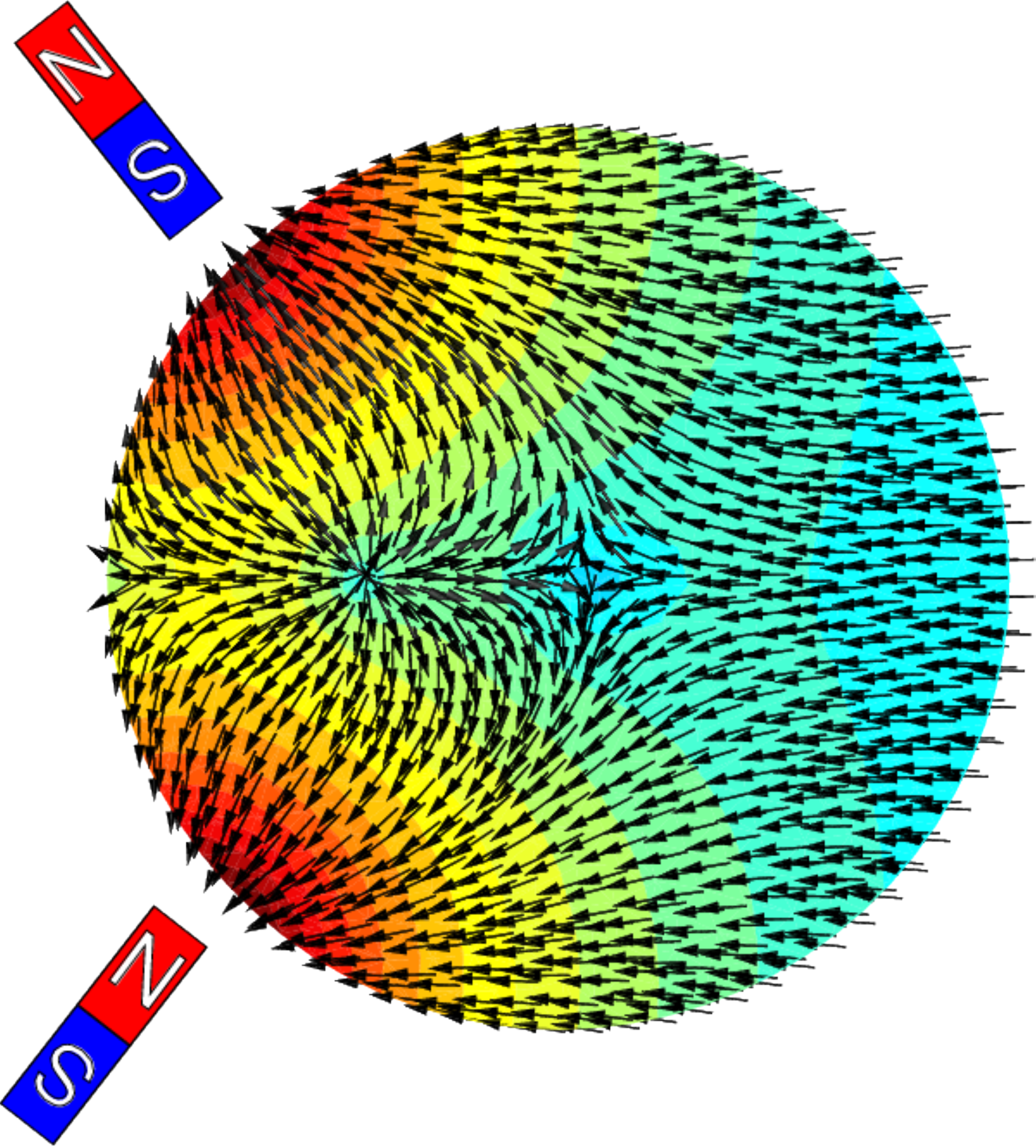}\qquad
\includegraphics[width=0.25\textwidth]{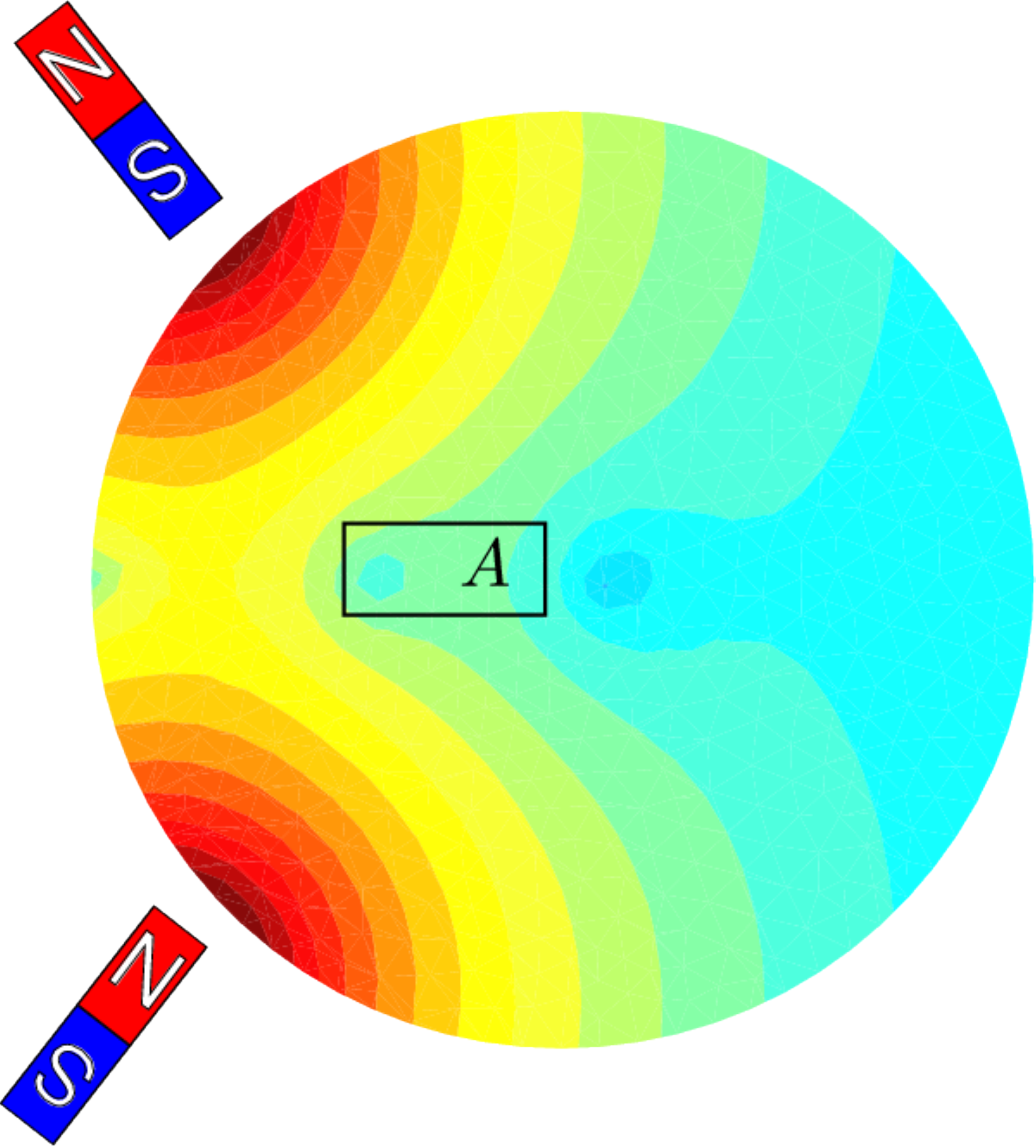}
\caption{The panels show the magnetic force $\bF$, with arrows indicating the normalized
 force direction generated using one and two permanent magnets.
 The background color indicates the magnetic force magnitude  $|\nabla|\bH|^2|$ on a log scale.
In particular, the light and dark colors correspond to low and high values of $|\nabla|\bH|^2|$.
The left panel shows that using a permanent magnet one can generate $\bF$
which can be used to pull in or attract particles to a region of interest.
In contrast, using two permanent magnets we can generate $\bF$ that enables us to
``magnetically inject" particles. On the right  panel (same as in the center without arrows),
region $A$ contains a zone where the arrows point to the right and thus enable pushing
away from the magnets.}
\label{fig:push}
\end{figure}
As we notice, the interaction of these dipoles generates a zone
where the magnetic force is pointing outward (see region A, Figure~\ref{fig:push} (right)).
This field can push particles away from the dipole positions.
Indeed, the success of the aforementioned  applications highly depend
on the accurate control of the magnetic force.
Such a control will enable us to better understand the existing and explore new applications of MNPs,
ferrofluids and magnetic force-based models in general.
In this paper we focus on a key question: how to approximate a desired magnetic force $\bvel$
by actuating a configuration of magnetic field sources whose location and direction are fixed.
We aim to achieve this goal by studying a minimization problem
\[
\min_{\bF} \int_0^T \| \bF - \bvel \|_{\rL^2(D)}^2 \ dt\quad
\mbox{for } T>0 \mbox{  and } D\subset \R^d, d=2,3.
\]
A good approximation of  $\bvel$ computed by the minimization problem
enables us to manipulate, for instance, MNPs with the diversity of applications that this entails.

We begin this paper in Section 2 by discussing the mathematical formulation of two minimization problems.
In Section~\ref{s:fft}, we assume that the final time $T$ and the vector field $\bvel$
are given. In Section~\ref{s:mft} we study the problem with an unknown vector field so as to minimize the final time $T_F$. To tackle this, we
replace time with arc length. For both problems, we prove global existence and local uniqueness
of minimizers, the latter provided a second order sufficient condition holds. Section~\ref{s:disc} is devoted to the numerical analysis of the time-discrete
problems. We obtain first a $\rH^1$-weak convergence, which is motivated by $\Gamma$-convergence. In addition,
by assuming a second-order sufficient condition we prove $\rH^1$-strong local convergence for the first problem. In Section~\ref{s:numerics} we report  numerical tests that assess the ability of both approaches to approximate spatially uniform vector fields in a moving subdomain and the performance of the corresponding numerical schemes. We conclude with an  example which illustrates
how  the optimal magnetic force, generated by the minimization problem, can be used in magnetic drug targeting.



\section{Minimization and control}

Let $\O\subset \R^\dd $, $\dd=2,3$
be an open bounded domain and $T>0$ be the final time.
With $D_t$ we denote a time dependent domain
that deforms smoothly and is strictly contained in $\Omega$ for every $t\in[0,T]$.
Our goal is then to approximate a vector field $\bvel \in [\rL^{2}(0,T;\rL^2(\Omega))]^\dd$ by the
 so--called Kelvin
 force (cf.~\eqref{eq:F}). We consider magnetic sources outside $\Omega$, then, from
 the  Maxwell equations it follows that the magnetic field $\bH$ satisfies:
\begin{align}\label{eq:control_h}
\curl \bH&=\bv{0}, \quad   \di \bH=0 \quad \mbox{in } \Omega,
\end{align}
where the last equation follows by assuming a linear relation between the magnetic induction
$\bB$ and the magnetic field $\bH$. The magnetic field generated by a current distribution
and a permanent magnet  can be modeled by the Biot-Savart law, which is a magnetostatic approximation.
However, for simplicity, in our case we consider a dipole approximation to the magnet
source (see Figure~\ref{fig:3dipoles}), which provides a concise and easily tractable representation
 of the magnetic field (see \cite{PA2013} for a quantification  of the error associated
with the dipole approximation). This approximation is commonly used
 for localization of objects in applications ranging from medical imaging to
military. It is also extensively used in real-time control of magnetic devices 
in medical sciences \cite{FKA2010,MA2011,NKA2010}.
\begin{figure}
\begin{center}
\includegraphics[width=0.22\linewidth]{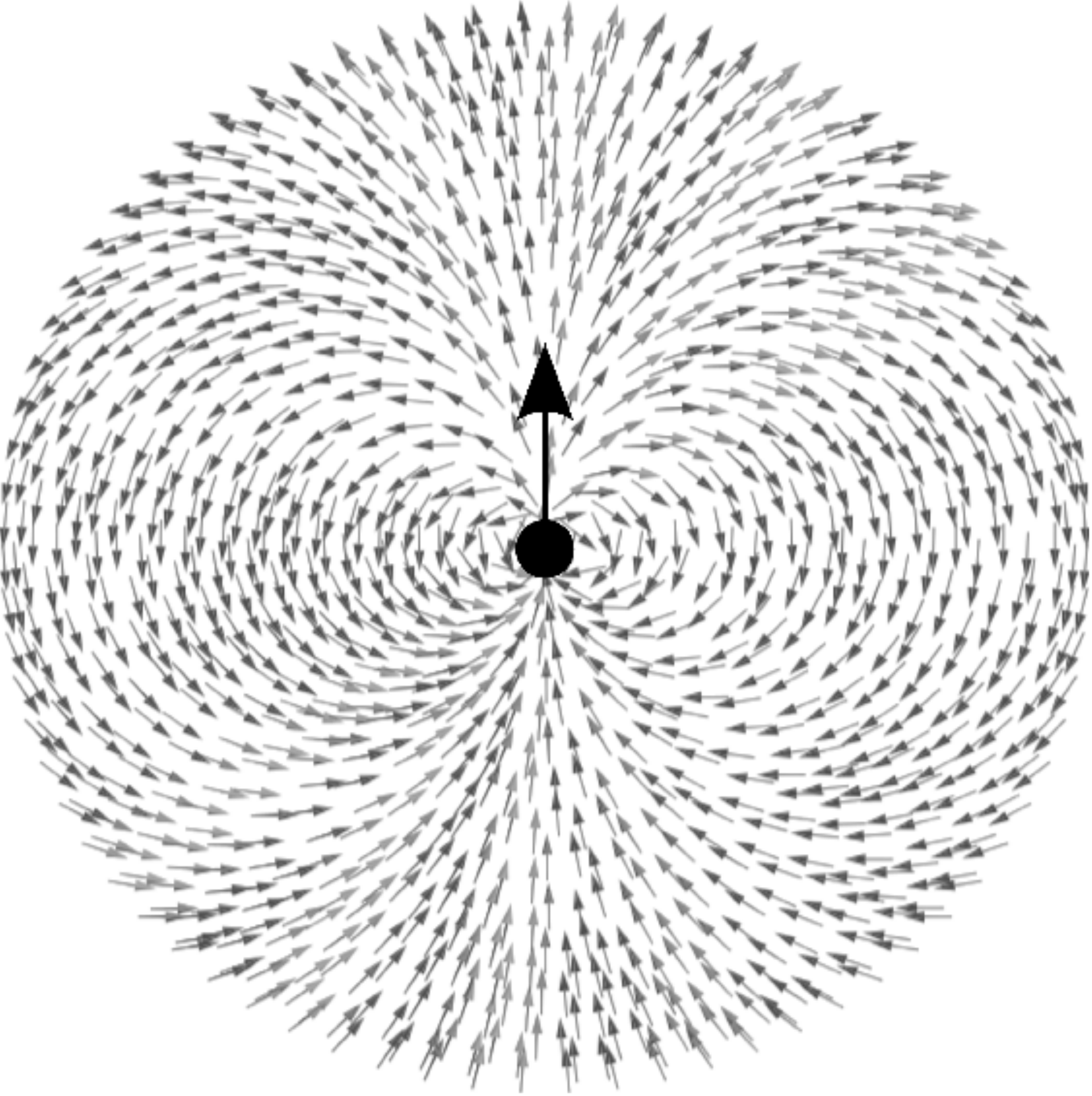}\qquad
\includegraphics[width=0.22\linewidth]{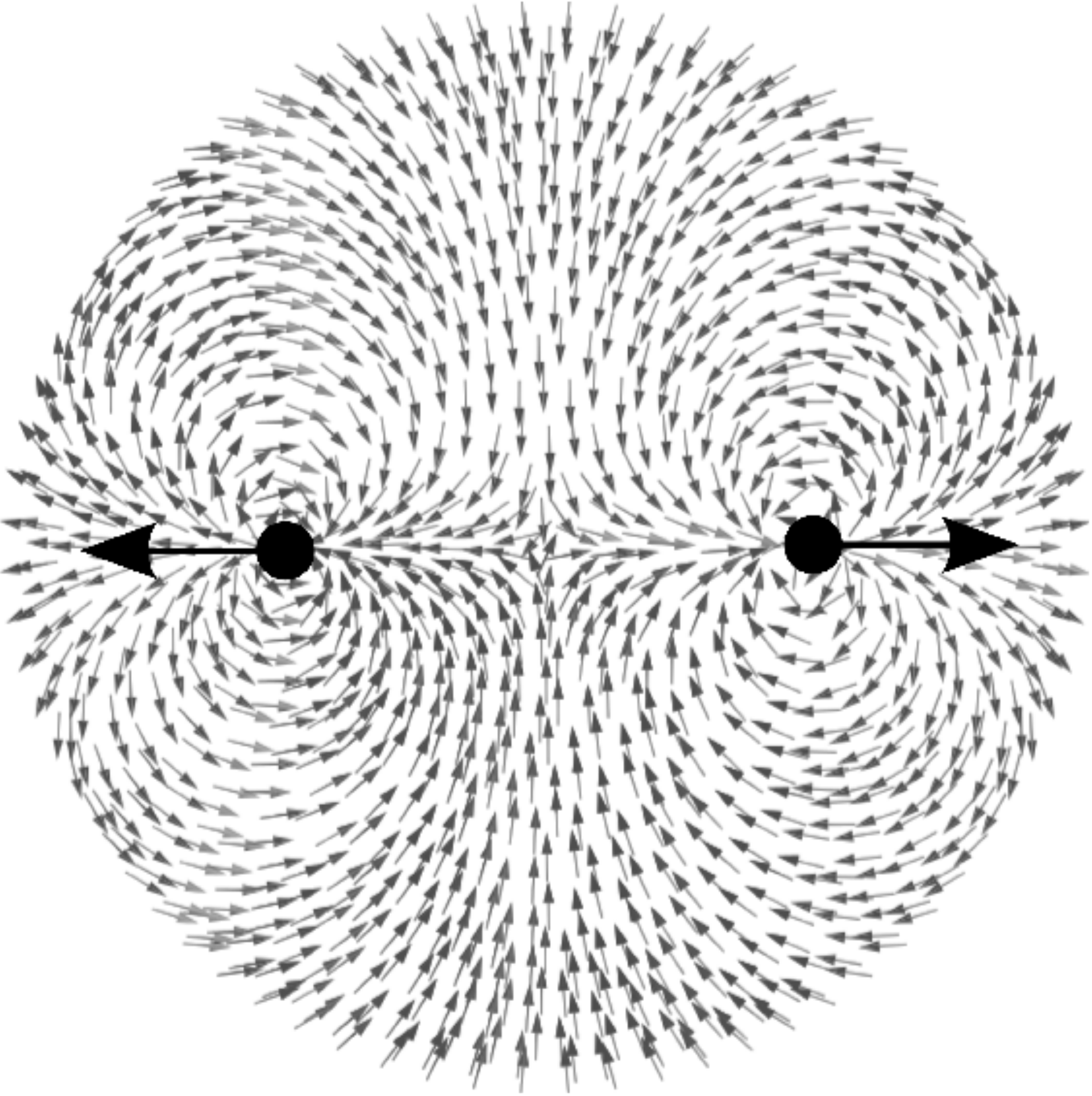}\qquad
\includegraphics[width=0.22\linewidth]{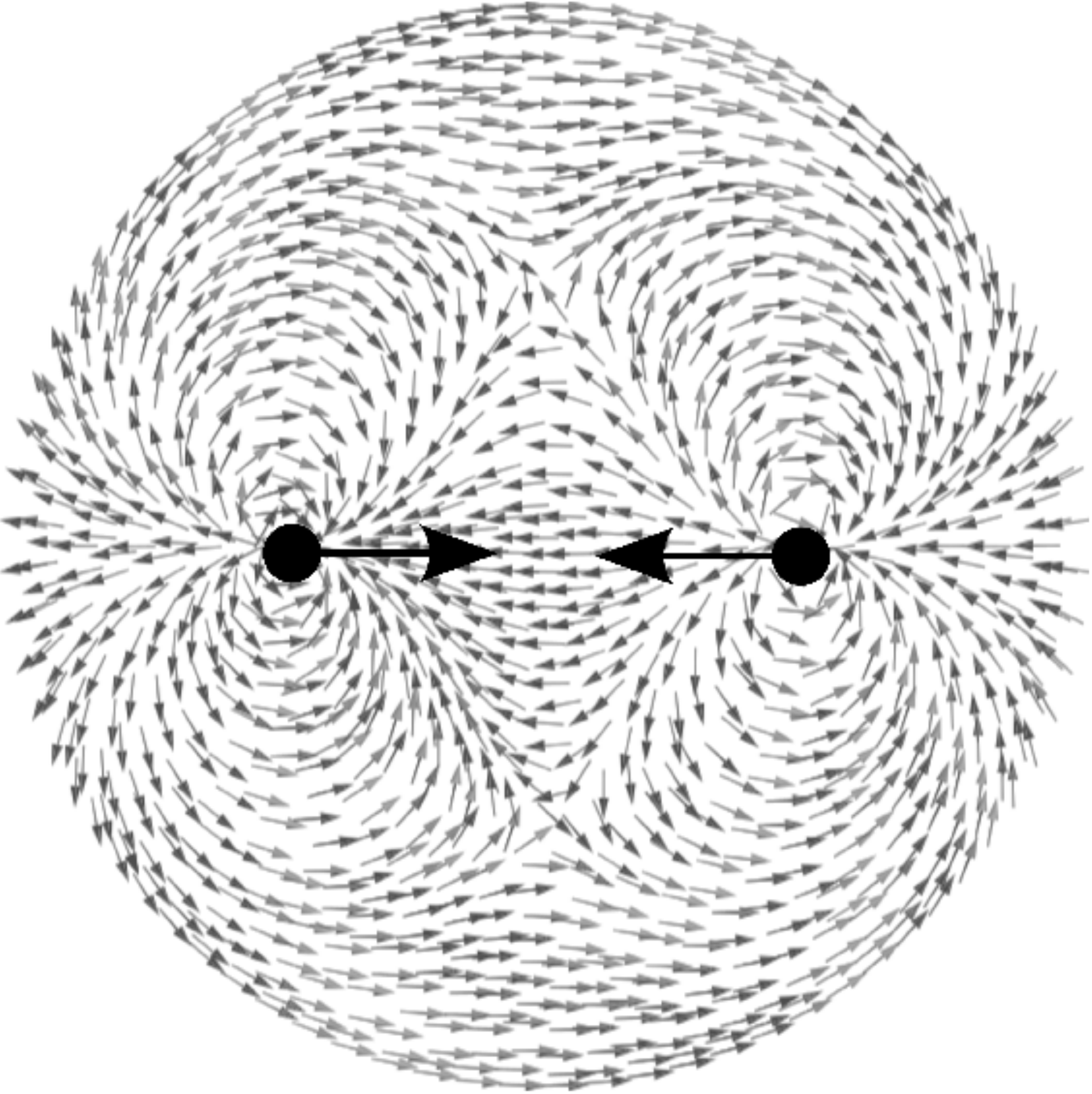}
\caption{These panels depicts the magnetic field $\bH$ (represented by small arrows)
generated using the dipole approximation \eqref{eq:h_mag}
for unit intensities ($\alpha_i=1$, $i=1,\ldots, \nd$).
The $i^{th}$ dipole is located at $\bx_i \in \Omega$  
with direction $\widehat{\bv{d}}_i$. Left panel: single dipole with $\widehat{\bv{d}}_1 = (0,1)$ and $\bx_1 = (0.0)$.
Center panel: two dipoles with $\widehat{\bv{d}}_1 = (-1,0)$,  $\widehat{\bv{d}}_2 = (1,0)$ and
$\bx_1 = (-0.5,0)$, $\bx_2 = (0.5,0)$. Right panel: two dipoles with $\widehat{\bv{d}}_1 = (1,0)$,
 $\widehat{\bv{d}}_2 = (-1,0)$ and $\bx_1 = (-0.5,0)$, $\bx_2 = (0.5,0)$.  For presentation purposes,
 we consider the normalized magnetic field on each example.
}
\label{fig:3dipoles}
\end{center}
\end{figure}
In addition,  fixed location and direction of magnetic dipoles but variable intensity field,
 is considered in several applications such as microrobots micromanipulation \cite{KAKBSN2010}
 or  magnetic drug targeting \cite{SLDS2013}.
 With such a configuration it is possible  to reduce the numbers of free variables
 (fixed location and direction of dipoles) and the hardware involved.

Since we are interested in the feasibility of the field approximation  rather than the optimality of the
prescribed location, from now on we will  assume that the magnetic field is modeled by the superposition
of a fixed number $\nd$ of dipoles, namely
\begin{equation}\label{eq:h_mag}
\bH(\bx,t)=\sum_{i=1}^{\nd}\alpha_i(t)\del{\dd\dfrac{(\bx-\bx_i)(\bx-\bx_i)^\top}{|\bx-\bx_i|^2}-\mathbb{I}}\dfrac{\widehat{\bv{d}_i}}{|\bx-\bx_i|^\dd}=
\sum_{i=1}^{\nd}\alpha_i(t)\bH_i(\bx),
\end{equation}
where $\mathbb{I}\in \R^{\dd\times \dd}$ is the identity matrix. In addition,
$\widehat{\bv{d}_i}\in\R^\dd$ and $\bx_i\in\R^\dd\setminus\overline{\O}$, $i=1,\ldots,\nd$, denote fixed
unit vectors and dipole positions, respectively (see Figure~\ref{fig:domain}).
It is straightforward to show that the magnetic field given by \eqref{eq:h_mag}
satisfies \eqref{eq:control_h}.

\subsection{Problem 1: Fixed final time}\label{s:fft}
 With this configuration in mind, we introduce the  minimization problem
\begin{subequations}\label{eq:PJ1}
\begin{align}\label{eq:control_problem_t}
\min_{\balpha\in \calH_{ad} } \calJ(\balpha)
\end{align}
with
\begin{equation}
\calJ(\balpha):=\dfrac{1}{2}\int_0^T\|\nabla|\bH(\balpha)|^2-\bvel\|^2_{\rL^2(D_t)}dt
+\dfrac{\lambda}{2}\int_0^T|\partialt\balpha|^2dt.
\end{equation}
\end{subequations}
Here $\balpha(t):=(\alpha_1(t),\ldots,\alpha_{\nd}(t))^\top\in\R^{\nd}$ denotes the
vector of magnetic field intensities, and  $\lambda >0$ is the cost of control.
A nonzero $\lambda$
in \eqref{eq:control_problem_t} will enforce a smooth evolution of the intensities.
Moreover, the larger the value of $\lambda$, the smoother is this evolution.
For given  constant vectors $\balpha_0,\balpha_*, \balpha^*\in\R^{\nd}$, we
seek $\balpha$  in the following admissible convex set:
\begin{equation}\label{eq:ad_set}
\calH_{ad} := \left\{\balpha \in [\rH^1(0,T)]^{\nd} :  \balpha(0)=\balpha_0  \quad \mbox{and}
\quad  \balpha_* \leq \balpha(t)\leq \balpha^*,\,\, \forall t\in [0,T]\right\}.
\end{equation}
Notice that the feasibility of the dipole approximation can be studied with the first term of
$\calJ$. Applications of \eqref{eq:PJ1} include control of drug concentration, introducing
particles containing nucleic acids into target cells (magnetofection), and to separate magnetic
materials from a nonmagnetic liquid medium.

The resulting magnetic force arising as a solution to \eqref{eq:PJ1} can be used to
 control drug concentration, cell, or to separate
magnetic materials from a nonmagnetic liquid medium, among other applications.
\begin{remark}\rm
Changing the number of dipoles, their positions or magnetic field directions leads to different configurations for
the minimization problem \eqref{eq:PJ1}. Although the computed force may vary depending on the configuration, the
following mathematical analysis remains the same.
\end{remark}

\begin{figure}[ht]
\begin{center}
\includegraphics[height=0.25\linewidth]{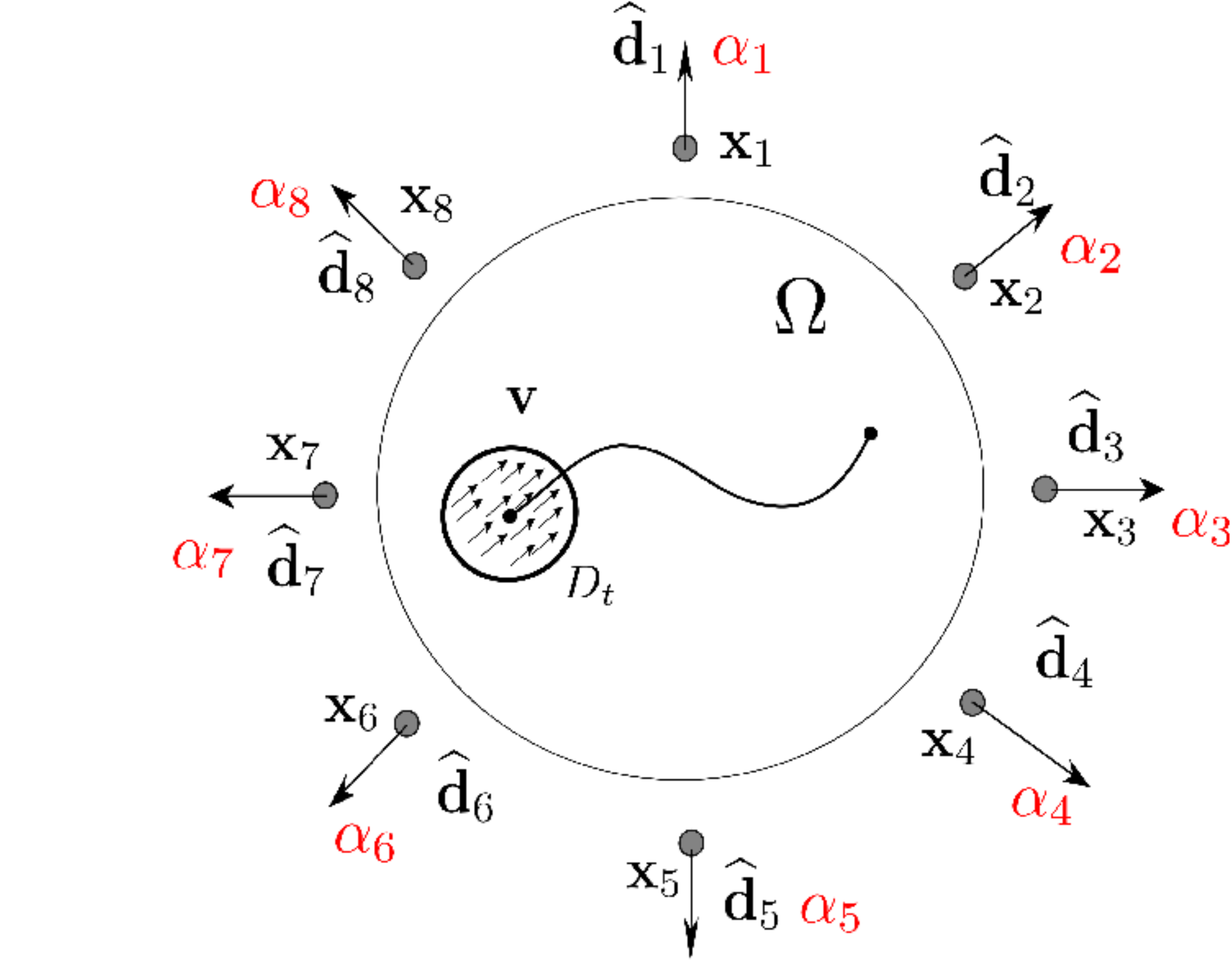}
\includegraphics[height=0.25\linewidth]{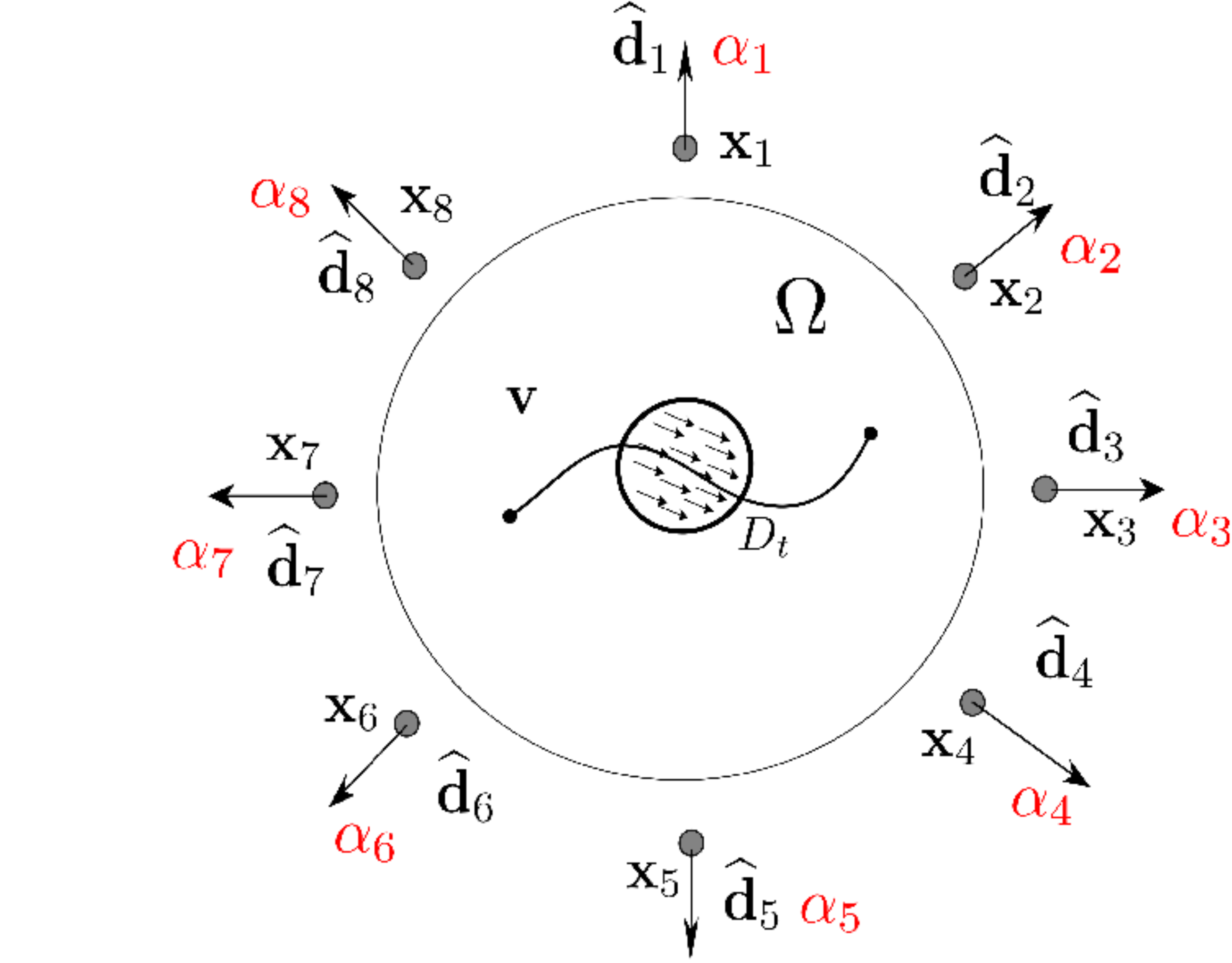}
\includegraphics[height=0.25\linewidth]{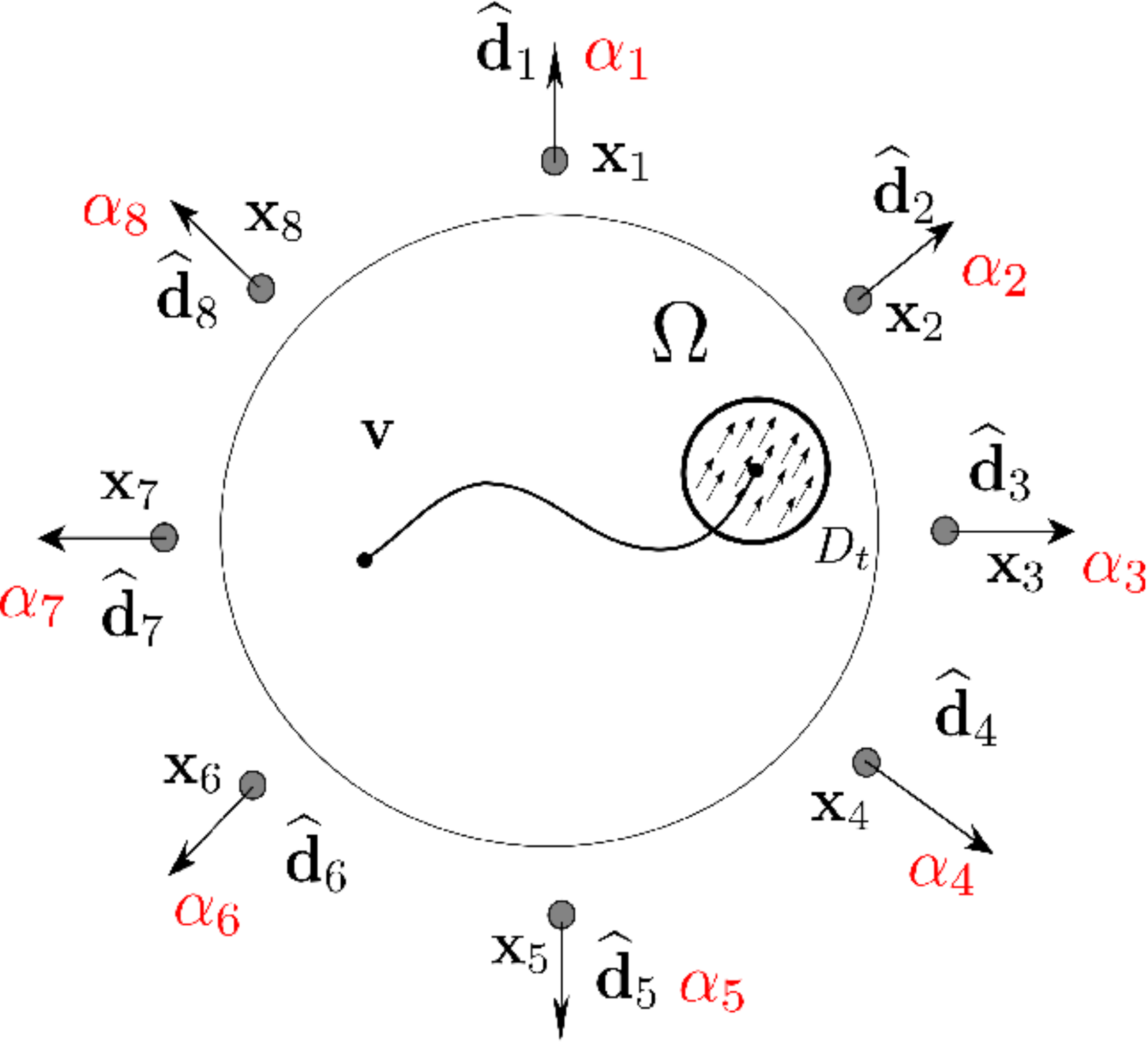}
\caption{
Configuration of $\nd=8$ dipoles surrounding a computational domain $\Omega\subset \R^2$ and moving domain $D_t$.
 The target vector field $\bvel$, represented with arrows, is shown only in $D_t$. Each dipole is characterized by
 its position $\bx_i$ (represented by a dot outside $\O$), direction $\bv{d}_i$
(represented by an arrow)
and the magnitude $\balpha_i$, for $i=1,\ldots,\nd$. The evolution of a circular moving domain $D_t$
 is shown for different times: $t = 0$ (left), $t\in (0,T)$ (center) and $t = T$ (right).
 The center of $D_t$ moves along a curve represented by the solid line inside $\O$.
}
\label{fig:domain}
\end{center}
\end{figure}
In view of \eqref{eq:h_mag}, we can rewrite $\bH$ as
\[
\bH(\bx,t)=\sum_{i=1}^{\nd}\alpha_i(t)\bH_i(\bx)=\mH(\bx)\balpha(t)
\]
where $\mH=\left(\bH_1\, \bH_2\,\ldots\,\bH_{\nd}\right)\in [C^{\infty}(\overline{\Omega})]^{\dd\times {\nd}}$. Thus
\begin{align*}
\nabla|\bH(\bx,t)|^2&=\nabla\left(\balpha(t)^\top\mH(\bx)^\top\mH(\bx)\balpha(t)\right) \\
&=\begin{pmatrix}\balpha(t)^\top\partial_{x_1}(\mH(\bx)^\top\mH(\bx))\balpha(t)\\
\vdots
\\
\balpha(t)^\top\partial_{x_\dd}(\mH(\bx)^\top\mH(\bx))\balpha(t)\end{pmatrix}
=\begin{pmatrix}\balpha^\top\bv{P}_1\balpha\\
\vdots
\\
\balpha^\top\bv{P}_\dd\balpha\end{pmatrix}
\end{align*}
with $\bv{P}_i=\partial_{x_i}(\mH^\top\mH)\in [C^{\infty}(\overline{\Omega})]^{{\nd}\times {\nd}}, i=1,\ldots,\dd.$
If $\bvel:=(\rmvel_{1},\ldots, \rmvel_{\dd})^\top$, then from the above equation we notice that
\eqref{eq:control_problem_t} reduces to
\begin{align}\label{eq:control_problem_2}
\min_{\balpha\in \calH_{ad} } \calJ(\balpha), \quad   \calJ(\balpha)=\dfrac{1}{2}\int_0^T
\left(\sum_{i=1}^\dd \|\balpha^\top\bv{P}_i\balpha-\rmvel_{i}\|^2_{\rL^2(D_t)}
+\lambda |\partialt\balpha|^2 \right) dt.
\end{align}
Next, we focus on the mathematical analysis of the minimization problem \eqref{eq:control_problem_2},
which is nonconvex.
We first embark on a journey to show the existence and local uniqueness of a solution to \eqref{eq:control_problem_2}.
For notational simplicity, from now on we denote by $\mathrm{V}$ both a Banach space $\mathrm{V}$ and
the Banach tensor product  $\mathrm{V}^{\nd}$.
\begin{theorem}[existence of minimizers]\label{thm:existence_optimal_control}
There exist at least one solution $\bbalpha$ to the minimization problem \eqref{eq:control_problem_2}.
\end{theorem}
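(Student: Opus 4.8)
The plan is to use the direct method of the calculus of variations. First I would check that the admissible set $\calH_{ad}$ is nonempty: the constant function $\balpha(t) \equiv \balpha_0$ belongs to $[\rH^1(0,T)]^{\nd}$, satisfies the initial condition, and lies in the box $[\balpha_*,\balpha^*]$ provided $\balpha_* \le \balpha_0 \le \balpha^*$ (which we may assume, otherwise the problem is vacuous). Since $\calJ \ge 0$, the infimum $m := \inf_{\balpha \in \calH_{ad}} \calJ(\balpha)$ is a finite nonnegative number, and I would pick a minimizing sequence $(\balpha^k)_k \subset \calH_{ad}$ with $\calJ(\balpha^k) \to m$.

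Next I would extract a weakly convergent subsequence. The coercive term $\frac{\lambda}{2}\int_0^T |\partialt\balpha^k|^2\,dt$ is bounded along the minimizing sequence, and the pointwise box constraint gives a uniform $\rL^\infty(0,T)$ (hence $\rL^2(0,T)$) bound on $\balpha^k$ itself; together these yield a uniform bound in $[\rH^1(0,T)]^{\nd}$. By reflexivity of $\rH^1$ we obtain a subsequence (not relabeled) with $\balpha^k \weakto \bbalpha$ in $[\rH^1(0,T)]^{\nd}$. By the compact embedding $\rH^1(0,T) \hookrightarrow\hookrightarrow C([0,T])$ (Rellich in one dimension, since $[0,T]\subset\R$), we may also assume $\balpha^k \to \bbalpha$ uniformly on $[0,T]$. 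The uniform convergence immediately passes the constraints to the limit: $\bbalpha(0) = \lim \balpha^k(0) = \balpha_0$ and $\balpha_* \le \bbalpha(t) \le \balpha^*$ for all $t$, so $\bbalpha \in \calH_{ad}$.

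It remains to show $\calJ(\bbalpha) \le \liminf_k \calJ(\balpha^k) = m$, which forces $\calJ(\bbalpha) = m$. For the regularization term, $\balpha \mapsto \int_0^T |\partialt\balpha|^2\,dt$ is convex and strongly continuous on $[\rH^1(0,T)]^{\nd}$, hence weakly lower semicontinuous, so $\int_0^T|\partialt\bbalpha|^2 \le \liminf_k \int_0^T|\partialt\balpha^k|^2$. For the tracking term, here is where the uniform convergence does the work and the nonconvexity is harmless: since each $\bv{P}_i \in [C^\infty(\overline{\O})]^{\nd\times\nd}$ is bounded on $\overline{\O}$ and the map $\balpha \mapsto \balpha^\top \bv{P}_i(\bx)\,\balpha$ is a fixed quadratic polynomial, the uniform convergence $\balpha^k \to \bbalpha$ on $[0,T]$ gives $\balpha^{k\top}\bv{P}_i\balpha^k \to \bbalpha^\top\bv{P}_i\bbalpha$ uniformly on $\overline{\O}\times[0,T]$; combined with $|D_t|\le|\O|<\infty$ and dominated convergence in $t$, we get $\int_0^T \sum_i \|\balpha^{k\top}\bv{P}_i\balpha^k - \rmvel_i\|^2_{\rL^2(D_t)}\,dt \to \int_0^T \sum_i \|\bbalpha^\top\bv{P}_i\bbalpha - \rmvel_i\|^2_{\rL^2(D_t)}\,dt$, i.e. full continuity (not merely lower semicontinuity) of the tracking term along the sequence. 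Adding the two estimates gives $\calJ(\bbalpha) \le \liminf_k \calJ(\balpha^k) = m$, so $\bbalpha$ is a minimizer.

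The only genuinely delicate point is the handling of the moving domain $D_t$: one must make sure the integrand $\mathbf{1}_{D_t}(\bx)\,|\balpha^\top\bv{P}_i\balpha - \rmvel_i|^2$ is jointly measurable in $(\bx,t)$ and that the smooth deformation hypothesis on $D_t$ is enough to apply Fubini and dominated convergence; this is routine given the stated regularity of the deformation, so I would treat it briefly. Everything else is standard: the nonconvexity of the cost never obstructs \emph{existence} because in one time dimension $\rH^1$ compactly embeds into $C([0,T])$, turning weak convergence of the controls into strong (uniform) convergence, under which the quadratic-in-$\balpha$ tracking term is continuous.
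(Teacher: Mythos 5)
Your proof is correct and follows essentially the same route as the paper: the direct method, weak $\rH^1$ compactness, the compact embedding $\rH^1(0,T)\hookrightarrow C([0,T])$ to get strong convergence of the tracking term, and weak lower semicontinuity of the $\lambda$-regularization term. You are in fact slightly more careful than the paper in justifying the uniform $\rH^1$ bound (box constraint plus the coercive $\lambda$-term along the minimizing sequence, rather than asserting boundedness of $\calH_{ad}$ itself) and in passing the constraints to the limit, but these are refinements of the same argument.
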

\begin{proof}
We apply  the direct method of the calculus of variations. Given that $\calJ$
is bounded below by zero, we deduce that $j = \inf_{\balpha \in \calH_{ad}} \calJ(\balpha)$ is finite.
We can thus construct a minimizing sequence $\{ \balpha_n \}_{n\in \mathbb{N}}$ such that
\[
    j = \lim_{n\rightarrow \infty} \calJ(\balpha_n).
\]
As the sequence $\{ \balpha_n \}_{n\in \mathbb{N}}$ is uniformly bounded in $\calH_{ad}\subset \rH^1(0,T)$,
we can extract a (not relabeled) weakly
convergent subsequence $\{ \balpha_{n} \}_{n\in \N}$ such that
\begin{align}\label{eq:ex_1}
    \balpha_{n} \weakto \bar{\balpha} \quad \mbox{in } \rH^1(0,T),
                \qquad \bar{\balpha} \in \calH_{ad}.
\end{align}
Moreover, according to \cite[Theorem~9.16]{B11} we have
\begin{align}\label{eq:ex_2}
    \balpha_{n} \to \bar{\balpha} \quad \mbox{in }
C([0,T]).
\end{align}
%
To show the optimality of $\bar{\balpha}$, we first consider \eqref{eq:ex_2} to get that
\begin{align*}
\int_0^T\|\balpha_n^\top\bv{P}_i\balpha_n-\rmvel_{i}\|^2_{\rL^2(D_t)}
\to \int_0^T\|\bar{\balpha}^\top\bv{P}_i\bar{\balpha}-\rmvel_{i}\|^2_{\rL^2(D_t)}  \qquad i=1,\ldots,\dd.
\end{align*}
This and the fact that the last term in $\calJ$
is weakly lower semicontinuous (see~\cite[Theorem 2.12]{T10}) yields
\begin{align*}
\min_{\balpha\in \calH_{ad}} \calJ(\balpha) = \liminf_{n\to\infty} \calJ(\balpha_n)
  \geq\calJ(\bar{\balpha}),
\end{align*}
which concludes the proof.
\end{proof}

We now state the first order optimality condition. This follows by standard arguments
(see \cite[Lemma~2.21]{T10}) in view of the fact that
$\calJ : \rH^1(0,T) \rightarrow \mathbb{R}$ is Fr\'echet differentiable.
\begin{lemma}[first order optimality condition]\label{var_ineq}
If $\bbalpha\in \calH_{ad}$  denotes an optimal control, given
by Theorem~\ref{thm:existence_optimal_control}, then the first order necessary optimality condition satisfied by
$\bbalpha$ is
\[
\calJ'(\bbalpha)\pair{\balpha - \bbalpha}\geq 0\qquad \forall \balpha\in \calH_{ad}
\]
where, for $\delta\balpha=(\balpha - \bbalpha)$ we have
\[
\calJ'(\bbalpha)\pair{\delta\balpha}=\int_0^T \left(\sum_{i=1}^\dd \int_{D_t}\left(\bbalpha^\top\bv{P}_i\bbalpha-\rmvel_{i}\right)
\left(2\bbalpha^\top\bv{P}_i\delta\balpha\right) d\bx +\lambda \partialt\bbalpha^\top \partialt\delta\balpha \right) dt.
\]
\end{lemma}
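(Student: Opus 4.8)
The plan is to verify the two standard ingredients behind a first-order necessary condition for minimization over a convex set: (i) that $\calH_{ad}$ is convex (already noted in the paper), so that admissible directions $\balpha - \bbalpha$ are available, and (ii) that $\calJ$ is Fréchet differentiable on $\rH^1(0,T)$ with the derivative displayed in the statement. The necessary condition $\calJ'(\bbalpha)\pair{\balpha-\bbalpha}\ge 0$ then follows from the elementary observation that, for any $\balpha\in\calH_{ad}$, the function $s\mapsto \calJ\big(\bbalpha + s(\balpha-\bbalpha)\big)$ on $[0,1]$ is well defined (by convexity of $\calH_{ad}$), attains its minimum at $s=0$ (by optimality of $\bbalpha$), and is differentiable at $s=0$ with derivative $\calJ'(\bbalpha)\pair{\balpha-\bbalpha}$; hence this one-sided derivative is nonnegative. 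This is exactly \cite[Lemma~2.21]{T10}, so the substantive work is the differentiability computation and the identification of the formula.

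First I would show Fréchet differentiability. Write $\calJ = \calJ_1 + \calJ_2$ with $\calJ_1(\balpha) = \tfrac12\int_0^T\sum_{i=1}^\dd \|\balpha^\top\bv{P}_i\balpha - \rmvel_i\|_{\rL^2(D_t)}^2\dt$ and $\calJ_2(\balpha) = \tfrac\lambda2\int_0^T|\partialt\balpha|^2\dt$. The term $\calJ_2$ is a bounded quadratic form on $\rH^1(0,T)$, hence smooth, with derivative $\calJ_2'(\bbalpha)\pair{\delta\balpha} = \lambda\int_0^T \partialt\bbalpha^\top\partialt\delta\balpha\,\dt$. For $\calJ_1$, the key point is that each map $\balpha\mapsto \balpha^\top\bv{P}_i\balpha$ is a continuous quadratic (hence $C^\infty$) map from $\R^{\nd}$ into $\R$, with $\bv{P}_i\in[C^\infty(\overline\Omega)]^{\nd\times\nd}$ uniformly bounded on $\overline\Omega$; combined with the continuous embedding $\rH^1(0,T)\hookrightarrow C([0,T])$ (equation \eqref{eq:ex_2}'s underlying fact), the Nemytskii-type map $\balpha\mapsto \big(\bx,t)\mapsto \balpha(t)^\top\bv{P}_i(\bx)\balpha(t) - \rmvel_i(\bx,t)\big)$ is differentiable from $\rH^1(0,T)$ into $\rL^2(0,T;\rL^2(D_t))$, with directional derivative $2\bbalpha^\top\bv{P}_i\delta\balpha$. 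Composing with the smooth functional $v\mapsto\tfrac12\int_0^T\|v\|_{\rL^2(D_t)}^2\dt$ and applying the chain rule gives
\[
\calJ_1'(\bbalpha)\pair{\delta\balpha} = \int_0^T\sum_{i=1}^\dd \int_{D_t}\big(\bbalpha^\top\bv{P}_i\bbalpha - \rmvel_i\big)\big(2\bbalpha^\top\bv{P}_i\delta\balpha\big)\dx\,\dt,
\]
which together with $\calJ_2'$ yields the asserted formula. One should check that the remainder is $o(\|\delta\balpha\|_{\rH^1})$: since $\|\delta\balpha\|_{C([0,T])}\lesssim\|\delta\balpha\|_{\rH^1(0,T)}$, the quadratic-in-$\delta\balpha$ remainder terms $\int_0^T\int_{D_t}|\bbalpha^\top\bv{P}_i\bbalpha-\rmvel_i|\,|\delta\balpha^\top\bv{P}_i\delta\balpha|$ and $\int_0^T\int_{D_t}(2\bbalpha^\top\bv{P}_i\delta\balpha + \delta\balpha^\top\bv{P}_i\delta\balpha)^2$ are all $O(\|\delta\balpha\|_{C([0,T])}^3 + \|\delta\balpha\|_{C([0,T])}^4) = o(\|\delta\balpha\|_{\rH^1})$, uniformly in $t$ because $D_t\subset\subset\Omega$ and $\bv{P}_i$ is smooth on $\overline\Omega$.

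The main obstacle is a mild technical one: carefully handling the moving domain $D_t$ inside the time integral when differentiating $\calJ_1$. Because the $\bv{P}_i$ are fixed smooth fields on $\overline\Omega$ and the integration domain $D_t$ varies but stays strictly inside $\Omega$, the $t$-dependence of $D_t$ does not interfere with differentiation in $\balpha$ — we differentiate the integrand pointwise in $(\bx,t)$ and the domain is simply along for the ride — so no Reynolds transport / shape-derivative argument is needed here. One only needs the measurability and integrability of $t\mapsto\|\cdot\|_{\rL^2(D_t)}^2$, which follows from the assumed smooth deformation of $D_t$, and then dominated convergence to pass the difference quotient inside the integral. Once differentiability and the formula are established, the variational inequality is immediate from the convexity of $\calH_{ad}$ and the optimality of $\bbalpha$ as described above.
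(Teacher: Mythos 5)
Your proposal is correct and follows essentially the same route as the paper, which simply invokes the standard first-order condition for minimization over a convex set (citing \cite[Lemma~2.21]{T10}) once Fr\'echet differentiability of $\calJ$ on $\rH^1(0,T)$ is noted; you supply exactly that verification via the embedding $\rH^1(0,T)\hookrightarrow C([0,T])$ and the smoothness of the $\bv{P}_i$ on $\overline{\Omega}$. One cosmetic slip: the dominant remainder terms (e.g.\ $\int_0^T\int_{D_t}\big(2\bbalpha^\top\bv{P}_i\delta\balpha\big)^2$ and $\int_0^T\int_{D_t}|\bbalpha^\top\bv{P}_i\bbalpha-\rmvel_i|\,|\delta\balpha^\top\bv{P}_i\delta\balpha|$) are $O(\norm{\delta\balpha}{C([0,T])}^2)$ rather than $O(\norm{\delta\balpha}{C([0,T])}^3+\norm{\delta\balpha}{C([0,T])}^4)$, which is still $o(\norm{\delta\balpha}{\rH^1(0,T)})$, so the conclusion is unaffected.
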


Since $\calJ$ is nonconvex, it is customary (cf. \cite[Section~4.10]{T10}) to assume that $\bbalpha$ is a nondegenerate local minimizer, namely that
there exists $\omega >0$ such that
\begin{equation}\label{eq:calJ_quad}
\calJ''(\bbalpha)\pair{\delta\balpha,\delta\balpha} \geq\omega |\delta\balpha|_{\rH^1(0,T)}^2 \qquad \forall \delta\balpha\in \Ccone(\bbalpha)
\end{equation}
where
\begin{multline*}
\calJ''(\bbalpha)\pair{\delta\balpha,\delta\balpha} \\
=2\int_0^T\int_{D_t}\sum_{i=1}^\dd\left( \left(\bbalpha^\top\bv{P}_i\bbalpha-\rmvel_{i}\right)(\delta\balpha)^\top\bv{P}_i(\delta\balpha)
+2\left( \bbalpha^\top\bv{P}_i \delta\balpha\right)^2\right)+\lambda\int_0^T|\partialt(\delta\balpha)|^2
\end{multline*}
and
$$
\Ccone(\balpha):=\left\{ \bv{h} \in \rH^1(0,T): \bv{h}(0) = \boldsymbol{0}, \ \balpha+ \zeta\bv{h}\in \calH_{ad},  \ \forall 0 \le \zeta \le 1 \right\}
$$
is the set of admissible variations of $\balpha \in \calH_{ad}$. This ensures local uniqueness as we show now.
\begin{lemma}[local uniqueness]\label{lemma:loc_unique}
If $\bbalpha\in \calH_{ad}$ solves \eqref{eq:control_problem_2} and satisfies \eqref{eq:calJ_quad},
then there exist positive constants $\nu$ and $\hat{C}$ such that for all
$\balpha\in\calH_{ad}$ with $\norm{\balpha-\bbalpha}{\rL^2(0,T)}\leq \nu$
\begin{equation}\label{eq:quad_J}
\calJ(\balpha)\geq\calJ(\bbalpha)+\hat{C}\norm{\balpha-\bbalpha}{\rH^1(0,T)}^2.
\end{equation}
\end{lemma}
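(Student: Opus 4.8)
The plan is to expand $\calJ$ about $\bbalpha$, exploiting that $\calJ$ is a smooth (in fact quartic) functional on $\rH^1(0,T)$, and to combine the first order optimality condition of Lemma~\ref{var_ineq} with the coercivity assumption \eqref{eq:calJ_quad}. Fix $\balpha\in\calH_{ad}$ and set $\delta\balpha:=\balpha-\bbalpha$. Since $\calH_{ad}$ is convex and $\balpha(0)=\bbalpha(0)=\balpha_0$, the variation satisfies $\delta\balpha(0)=\boldsymbol 0$ and $\bbalpha+\zeta\,\delta\balpha=(1-\zeta)\bbalpha+\zeta\balpha\in\calH_{ad}$ for all $\zeta\in[0,1]$, i.e. $\delta\balpha\in\Ccone(\bbalpha)$, so \eqref{eq:calJ_quad} is applicable to it. By Taylor's theorem with integral remainder (valid because $\calJ\in C^2$),
\[
\calJ(\balpha)=\calJ(\bbalpha)+\calJ'(\bbalpha)\pair{\delta\balpha}+\int_0^1(1-s)\,\calJ''(\bbalpha+s\,\delta\balpha)\pair{\delta\balpha,\delta\balpha}\,ds ,
\]
and $\calJ'(\bbalpha)\pair{\delta\balpha}\geq 0$ by Lemma~\ref{var_ineq}. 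It therefore suffices to bound the remainder from below by a fixed multiple of $\norm{\delta\balpha}{\rH^1(0,T)}^2$.

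\textbf{Estimating the remainder.} I would split
$\calJ''(\bbalpha+s\,\delta\balpha)\pair{\delta\balpha,\delta\balpha}=\calJ''(\bbalpha)\pair{\delta\balpha,\delta\balpha}+\bigl(\calJ''(\bbalpha+s\,\delta\balpha)-\calJ''(\bbalpha)\bigr)\pair{\delta\balpha,\delta\balpha}$. The first summand is $\geq\omega\,\norm{\delta\balpha}{\rH^1(0,T)}^2$ by \eqref{eq:calJ_quad}. For the second, observe from the explicit formula for $\calJ''$ that, for fixed $\bv h$, the map $\balpha\mapsto\calJ''(\balpha)\pair{\bv h,\bv h}$ is affine in the coefficients $\balpha^\top\bv P_i\balpha-\rmvel_i$ and quadratic in $\balpha^\top\bv P_i\bv h$; using the elementary identities $\xi^\top\bv P_i\xi-\bbalpha^\top\bv P_i\bbalpha=(\xi-\bbalpha)^\top\bv P_i(\xi+\bbalpha)$ and the analogous one for the squared term, together with the facts that every element of $\calH_{ad}$ — hence also $\bbalpha+s\,\delta\balpha=(1-s)\bbalpha+s\balpha$ — is pointwise confined to $[\balpha_*,\balpha^*]$, that $\bv P_i\in C^\infty(\overline\O)$, and that $|D_t|\leq|\O|$, a short computation gives
\[
\bigl|\bigl(\calJ''(\bbalpha+s\,\delta\balpha)-\calJ''(\bbalpha)\bigr)\pair{\delta\balpha,\delta\balpha}\bigr|\leq C\,s\,\norm{\delta\balpha}{\rL^\infty(0,T)}\,\norm{\delta\balpha}{\rL^2(0,T)}^2 ,
\]
with $C$ depending only on $\O$, $\max_i\norm{\bv P_i}{\rL^\infty(\O)}$ and $\max(|\balpha_*|,|\balpha^*|)$.

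\textbf{Converting $\rL^2$-smallness into an absorbable term.} The final ingredient is the interpolation inequality $\norm{u}{\rL^\infty(0,T)}^2\leq 2\,\norm{u}{\rL^2(0,T)}\norm{\partialt u}{\rL^2(0,T)}$, valid for $u\in\rH^1(0,T)$ with $u(0)=0$ (integrate $u(t)^2=2\int_0^t u\,\partialt u$). Applied to $\delta\balpha$ it yields $\norm{\delta\balpha}{\rL^\infty(0,T)}\norm{\delta\balpha}{\rL^2(0,T)}^2\leq\sqrt 2\,\norm{\delta\balpha}{\rL^2(0,T)}^{5/2}\norm{\delta\balpha}{\rH^1(0,T)}^{1/2}$, and Young's inequality (exponents $4/3$ and $4$) then bounds the right-hand side by $\eps\,\norm{\delta\balpha}{\rH^1(0,T)}^2+C_\eps\,\norm{\delta\balpha}{\rL^2(0,T)}^{10/3}$ for any $\eps>0$. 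Since $\norm{\delta\balpha}{\rL^2(0,T)}^{10/3}\leq\nu^{4/3}\norm{\delta\balpha}{\rH^1(0,T)}^2$ whenever $\norm{\delta\balpha}{\rL^2(0,T)}\leq\nu$, one first picks $\eps$ small and then $\nu$ small enough that the whole correction is at most $\tfrac{\omega}{2}\norm{\delta\balpha}{\rH^1(0,T)}^2$. Plugging this into the Taylor identity (recall $\int_0^1(1-s)\,ds=\tfrac12$) gives $\calJ(\balpha)-\calJ(\bbalpha)\geq\tfrac{\omega}{4}\norm{\delta\balpha}{\rH^1(0,T)}^2$, which is \eqref{eq:quad_J} with $\hat C=\omega/4$.

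\textbf{Main obstacle.} The delicate point is the two‑norm discrepancy: the coercivity \eqref{eq:calJ_quad} controls the $\rH^1$‑norm, but the hypothesis only furnishes $\rL^2$‑closeness of $\balpha$ to $\bbalpha$, and $\norm{\delta\balpha}{\rH^1(0,T)}$ is \emph{not} a priori bounded on $\calH_{ad}$, which constrains $\balpha$ pointwise but not $\partialt\balpha$. Hence one cannot directly deduce $\rL^\infty$‑smallness of $\delta\balpha$; the interpolation plus Young step above is precisely what trades the uncontrolled $\rH^1$‑factor for a term absorbable on the left-hand side, at the cost of shrinking the $\rL^2$‑radius $\nu$.
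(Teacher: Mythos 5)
Your proof is correct and follows essentially the same route as the paper: Taylor expansion about $\bbalpha$, the first-order optimality condition to discard the linear term, the coercivity \eqref{eq:calJ_quad} for the leading quadratic term, and a Lipschitz-type bound on $\calJ''$ whose contribution is absorbed for $\nu$ small. The only differences are cosmetic: the paper distributes the norms in the remainder as $\norm{\balpha_1-\balpha_2}{\rL^2(0,T)}\,\norm{\delta\balpha}{C([0,T])}^2\lesssim \nu\,\norm{\delta\balpha}{\rH^1(0,T)}^2$, which makes your interpolation-plus-Young detour unnecessary (your own bound already yields $\norm{\delta\balpha}{\rL^\infty(0,T)}\norm{\delta\balpha}{\rL^2(0,T)}^2\lesssim \nu\,\norm{\delta\balpha}{\rH^1(0,T)}^2$ directly via the embedding $\rH^1(0,T)\subset C([0,T])$); and note that \eqref{eq:calJ_quad} is stated with the $\rH^1$ \emph{seminorm}, so to reach the full norm in \eqref{eq:quad_J} you need the Poincar\'e inequality on $\Ccone(\bbalpha)$ — available since $\delta\balpha(0)=\boldsymbol{0}$ — exactly as the paper invokes ``norm equivalence in $\Ccone(\bbalpha)$''.
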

\begin{proof}
It is easy to see that $\calJ : \rH^1(0,T) \rightarrow \mathbb{R}$ is twice continuously Fr\'echet differentiable.
Thus, from Taylor's theorem we have that there exists $\xi \in (0,1)$ such that for all $\delta\balpha := \balpha - \bbalpha \in \Ccone(\bbalpha)$
\begin{align*}
\calJ(\balpha)&=\calJ(\bbalpha)+\calJ'(\bbalpha)\pair{\delta\balpha}+\dfrac{1}{2}\calJ''(\bbalpha)\pair{\delta\balpha,\delta\balpha} \\
             & \quad +\dfrac{1}{2}\left(\calJ''(\bbalpha+\xi(\delta\balpha))-\calJ''(\bbalpha)\right)\pair{\delta\balpha,\delta\balpha}  .
\end{align*}
Next, we estimate the last term on the right-hand side of the above equation.
With this in mind, for  $\balpha_1:=\bbalpha+\xi(\delta\balpha)$ and $\balpha_2:=\bbalpha$
we consider
\begin{align*}
&\Big(\calJ''(\balpha_1)-\calJ''(\balpha_2)\Big)\pair{\delta\balpha,\delta\balpha}\\
&=2\int_0^T\int_{D_t}\sum_{i=1}^\dd\Bigg( \left(\balpha_1^\top\bv{P}_i\balpha_1-\balpha_2^\top\bv{P}_i\balpha_2\right)(\delta\balpha)^\top\bv{P}_i\delta\balpha \\
&\quad +2\left(\left(\balpha_1^\top\bv{P}_i\delta\balpha\right)^2-\left(\balpha_2^\top\bv{P}_i\delta\balpha\right)^2\right)\Bigg) \\
&=2\int_0^T\int_{D_t}\sum_{i=1}^\dd \left(\balpha_1^\top\bv{P}_i (\balpha_1-\balpha_2)+\balpha_2^\top\bv{P}_i(\balpha_1-\balpha_2)\right)(\delta\balpha)^\top\bv{P}_i\delta\balpha\\
&\quad+4\int_0^T\int_{D_t}\sum_{i=1}^\dd\left((\balpha_1-\balpha_2)^\top\bv{P}_i\delta\balpha\right)\left((\balpha_1+\balpha_2)^\top\bv{P}_i\delta\balpha\right) .
\end{align*}
A simple application of Cauchy-Schwarz inequality in conjunction with the
embedding $\rH^1(0,T) \subset C([0,T])$ leads to
\begin{align*}
\Big(\calJ''&(\balpha_1)-\calJ''(\balpha_2)\Big)\pair{\delta\balpha,\delta\balpha}\\
\leq& 2\norm{\balpha_1-\balpha_2}{\rL^2(0,T)}\norm{\delta\balpha}{C(0,T)}^2 |D_t|
\Big(\sum_{i=1}^{\dd}\norm{P_i}{\rL^{\infty}(\O)}^2\Big)
\left(\norm{\balpha_1}{\rL^2(0,T)}+\norm{\balpha_2}{\rL^2(0,T)}\right)\\
&+4\norm{\balpha_1-\balpha_2}{\rL^2(0,T)}\norm{\delta\balpha}{C(0,T)}^2 |D_t|
\Big(\sum_{i=1}^{\dd}\norm{P_i}{\rL^{\infty}(\O)}^2\Big)
\norm{\balpha_1+\balpha_2}{\rL^2(0,T)}\\
\leq& C_1\norm{\balpha_1-\balpha_2}{\rL^2(0,T)}\norm{\delta\balpha}{\rH^1(0,T)}^2
\end{align*}
where we used the fact that $D_t$ is a subset of $\Omega$ for all $t \in [0,T]$.
From the above inequality and \eqref{eq:calJ_quad} it follows that
\begin{align*}
\calJ(\balpha)\geq&\calJ(\bbalpha)+\calJ'(\bbalpha)\pair{\delta\balpha}+
\dfrac{\omega}{2}|\delta\balpha|_{\rH^1(0,T)}^2-\dfrac{C_1}{2}
\norm{\delta\balpha}{\rL^2(0,T)}\norm{\delta\balpha}{\rH^1(0,T)}^2.
\end{align*}
Then, the assertion follows from the first-order optimality condition
(cf. Lemma~\ref{var_ineq}) and the norm equivalence in $\Ccone(\bbalpha)$
provided that  $\norm{\delta\balpha}{\rL^2(0,T)}\leq \nu$ for
$\nu$ small enough.
\end{proof}
\begin{remark}[sufficient condition for \eqref{eq:calJ_quad}]\rm
The second derivative of $\calJ(\bbalpha)$ in the direction
 $\delta \balpha \in \Ccone(\bbalpha)$ can be estimated by using the
  Sobolev embedding $\rH^1(0,T)\subset C([0,T])$ and
  $\| \delta\balpha\|_{C([0,T])} \le T^{1/2} |\delta\balpha|_{\rH^1(0,T)}$ as follows
\begin{equation}
\begin{aligned}
\calJ''(\bbalpha)\pair{\delta\balpha,\delta\balpha}
&\geq -2|D_t|\sum_{i=1}^\dd\Big(\norm{\bbalpha^\top\bv{P}_i\bbalpha}{\rL^{\infty}(0,T;\rL^{\infty}(D_t))}\norm{\bv{P}_i}{\rL^{\infty}(\O)} \\
&\quad\quad\quad\quad\quad\quad\quad+2\norm{\bbalpha}{\rL^{\infty}(0,T)}^2\norm{\bv{P}_i}{\rL^{\infty}(\O)}^2\Big)
\norm{\delta\balpha}{\rL^{2}(0,T)}^2\\
&\quad-2T|\delta\balpha|_{\rH^1(0,T)}^2\sum_{i=1}^\dd\norm{\rmvel_{i}}{\rL^{1}(0,T;\rL^{1}(D_t))} \norm{\bv{P}_i}{\rL^{\infty}(\O)}+\lambda|\delta\balpha|_{\rH^1(0,T)}^2.
\end{aligned}
\end{equation}
The celebrated Poincar\'e inequality $\| \delta\balpha\|_{\rL^2(0,T)} \le T |\delta\balpha|_{\rH^1(0,T)}$ yields
\begin{align*}
\calJ''(\bbalpha)\pair{\delta\balpha,\delta\balpha}
&\geq \bigg(\lambda-2\sum_{i=1}^\dd\Big(3T^2|D_t|\norm{\bv{P}_i}{\rL^{\infty}(\O)}^2\norm{\bbalpha}{\rL^{\infty}(0,T)}^2 \\
&\quad\quad\quad+T|D_t|^{1/2}\norm{\rmvel_{i}}{\rL^{1}(0,T;\rL^{2}(D_t))} \norm{\bv{P}_i}{\rL^{\infty}(\O)}
\Big)\bigg)
|\delta\balpha|_{\rH^{1}(0,T)}^2.
\end{align*}
Clearly, a sufficient condition for \eqref{eq:calJ_quad} to hold is to consider  $\lambda$ sufficiently big which,
however,   is not reasonable for applications. Another condition for such an inequality to hold is take
  either $T$ or $D_t$  small enough.
Practically speaking, this appears in \cite{DKDK2003,HJBTF2003}.
\end{remark}

We have so far approximated a fixed vector field $\bvel$.
It is also meaningful to minimize the final time $T_F$. One way to realize this is by
treating $\bvel$ as an unknown. This is the topic of discussion for the next section.


\subsection{Problem 2: Minimizing the final time}\label{s:mft}
For obvious reasons, an important quantity to account for is
 the time it takes for $D_t$ to arrive at its final destination.
This is, for instance, the scenario of magnetic drug targeting.
This  can be  modeled by adding an unknown
 final time in the cost functional.

Notice that, in the previous section, magnetic force $\bvel$ and the moving domain $D_t$
are not necessarily related. However, if the final time is an unknown,
then a fixed vector field $\bvel$ in \eqref{eq:control_problem_2} is not a meaningful quantity.
We are now interested in the  ``force"   of $D_t$,
an unknown quantity, to be considered as a part of the minimization problem.
To properly handle the new unknown $T_F$, motivated by 
\cite{KS11}, we will reformulate the minimization problem in terms of the arc length.
Such a reformulation enables us to replace the variable final time $T_F$ by a fixed arc length $s_F$.
As an additional modeling approximation, we assume that there exist a curve $\calC$ (sufficiently smooth)
 with end points $(\bx_I)$ and $(\bx_F)$ that lies in $\O$ (see Figure~\ref{fig:curveC}).
We also assume that the displacement of $D_t$ is characterized by
 $\bx_C(t)$ which moves along  $\calC$ with velocity $\partialt \bx_C$, for instance, $D_t=\hD +\bx_C(t)$, where
 $\hD$ is a reference domain.
\begin{figure}[!h]
\begin{center}
\includegraphics[width=0.8\textwidth]{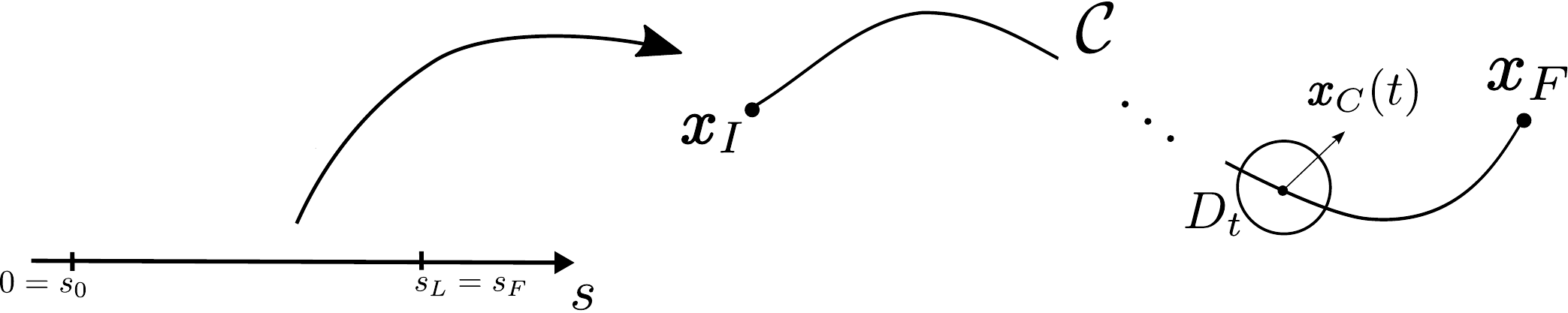}\qquad
\caption{Two dimensional curve $\calC$ and moving domain $D_t$ with barycenter $\bx_C$.
The domain $D_t$ travels along $\calC$ from an initial point $\bx_I$ to a final point $\bx_F$.}
\label{fig:curveC}
\end{center}
\end{figure}
In order to minimize the final time $T_F>0$, i.e, to maximize $\partialt \bx_C$
 we introduce $s \in [0,s_F]$ which represents the arc length parameter of the curve $\calC$.
 We assume that $\bx_C(t)$ starts from $\bx_I$ at $t=0$ and moves along $\calC$ with an arbitrary speed
$\theta(t)>0$, and eventually reaches $\bx_F$ at $t=T_F$.
With this in mind, we define a map $\sigma(\cdot):[0,T_F]\to [0,s_F]$ as
\begin{equation}\label{eq:s_t}
s=\sigma(t)=\int_0^{t}\theta(\tau)d\tau.
\end{equation}
Moreover, we assume that there exists a parametrization $\brho$ of $\calC$ depending
on the arc length $s \in [0,s_F]$ such that $\bx_C (\cdot) = \brho \circ \sigma (\cdot)$.
In the sequel we will assume that $\brho \in C^1[0,s_F]$.
From the above and by setting $\brho'=\partialt\brho$  we arrive at $
\partialt\bx_C(t)=\theta(t)\brho'(\sigma(t))$. We assume that the vector $\partialt\bx_C(t)$ has units similar to $\bF$. As in Problem~1, we consider a tracking type term:
\begin{align}\label{eq:control_problem_tf}
\int_0^{T_F}\left(\dfrac{1}{2} \sum_{i=1}^d\|\balpha(\sigma(t))^\top\bv{P}_i\balpha(\sigma(t))-\partialt\bx_{i,C}(t)\|^2_{\rL^2(D_{\sigma(t)})}
+ \beta\right)dt,
\end{align}
where $\beta>0$ is an additional penalty parameter and $\bx_{C}:=(\bx_{1,C},\ldots, \bx_{\dd,C})^\top$.
The last term on the right hand side of \eqref{eq:control_problem_tf}
is related with the minimization of the final time:
the larger the value of $\beta$, the smaller the value of $T_F$.
Notice that $\balpha:[0,s_F]\to \R$ and the moving domain are
defined in terms of arc length $s$ and are thus independent of $\theta$.
It is easy to see that the intent of \eqref{eq:control_problem_tf} is to
approximate $\partialt\bx_C$ and  minimize the final time $T_F$.
However, as we notice in the previous section, another important consideration of the
minimization problem is to control the  rate of change of the variables involved.
With this in mind, we apply the change of variables \eqref{eq:s_t} in \eqref{eq:control_problem_tf}
and propose the  minimization problem:
\begin{subequations}\label{eq:PJ2}
\begin{align}\label{eq:control_problem_sf}
&\min_{(\balpha,\theta)\in \calU_{ad}\times \calV_{ad} } \calJs(\balpha,\theta)
\end{align}
with
\begin{align}\label{eq:F_s}
\begin{aligned}
\calJs(\balpha,\theta)
:=\int_0^{s_F}\Bigg(\dfrac{1}{2\theta(s)}\sum_{i=1}^\dd\|\balpha(s)^\top\bv{P}_i\balpha(s)-\brho_i'(s)\theta(s)\|^2_{\rL^2(D_s)}\\
+\dfrac{\beta}{\theta(s)} +\dfrac{\lambda}{2}|\partials\balpha(s)|^2 +\dfrac{\eta}{2}|d_s\theta(s)|^2\Bigg)ds
\end{aligned}
\end{align}
\end{subequations}
where, for notational simplicity, we have used $\theta(\cdot):=\theta\circ\sigma^{-1}(\cdot)$.
In the above functional we have considered additional regularization terms:
for $\lambda, \eta > 0$, the last two term in  \eqref{eq:F_s} will
enforce a smooth evolution of the intensities and velocity.
The admissible sets for $\balpha$ and $\theta$ are defined as
\begin{align*}
\calU_{ad} &:= \set{\balpha \in \rH^1(0,s_F) : \balpha(0)=\balpha_0
 \quad \mbox{and} \quad  \balpha_* \leq \balpha(s)\leq \balpha^*,\,\, \forall s\in [0,s_F]  } , \\
\calV_{ad} &:= \set{\theta \in \rH^1(0,s_F) : \theta(0)=\theta_0
\quad \mbox{and} \quad  0<\theta_* \leq \theta(s)\leq \theta^*,\,\, \forall s\in [0,s_F]  }.
\end{align*}
Notice that, for fixed upper and lower bounds on the dipole intensities ($\balpha^*$ and $\balpha_*$, respectively)
the maximum value of the magnetic force is fixed, thus, the maximum ``velocity" is fixed.
However, from  practical considerations we may impose an upper bound $\theta^*$ lower than a physically reachable
velocity obtained for a given pair $(\balpha_*,\balpha^*)$.
\begin{theorem}[existence of minimizers]\label{thm:existence_optimal_control_sf}
There exists at least one solution $(\bbalpha,\btheta)$ to the control problem \eqref{eq:control_problem_sf}.
\end{theorem}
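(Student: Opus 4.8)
The plan is to repeat the direct method of the calculus of variations already used for Theorem~\ref{thm:existence_optimal_control}; the genuinely new feature here is that $\theta$ enters $\calJs$ both multiplicatively (through $\brho_i'\theta$) and in the denominators. First I would note that $\calU_{ad}\times\calV_{ad}$ is nonempty — it contains the constant pair $(\balpha_0,\theta_0)$ once $\balpha_*\le\balpha_0\le\balpha^*$ and $\theta_*\le\theta_0\le\theta^*$ — and that on this set $\calJs$ is bounded below by zero, since every summand is nonnegative as soon as $\theta\ge\theta_*>0$. Hence $j:=\inf_{\calU_{ad}\times\calV_{ad}}\calJs$ is finite, and there is a minimizing sequence $\{(\balpha_n,\theta_n)\}_{n\in\N}$ with $\calJs(\balpha_n,\theta_n)\to j$.

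Second, I would extract compactness. The pointwise constraints in $\calU_{ad}$ and $\calV_{ad}$ bound $\balpha_n$ and $\theta_n$ uniformly in $\rL^\infty(0,s_F)$, and along a minimizing sequence the regularization terms $\tfrac{\lambda}{2}\int_0^{s_F}|\partials\balpha_n|^2$ and $\tfrac{\eta}{2}\int_0^{s_F}|\partials\theta_n|^2$ are bounded, so $\{\balpha_n\}$ and $\{\theta_n\}$ are uniformly bounded in $\rH^1(0,s_F)$. Passing to a (not relabeled) subsequence, $\balpha_n\weakto\bbalpha$ and $\theta_n\weakto\btheta$ in $\rH^1(0,s_F)$, and by the compact embedding $\rH^1(0,s_F)\subset C([0,s_F])$ (cf.~\cite[Theorem~9.16]{B11}) both convergences hold strongly in $C([0,s_F])$. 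Pointwise bounds and initial conditions survive uniform limits, so $\bbalpha\in\calU_{ad}$ and $\btheta\in\calV_{ad}$; in particular $\btheta(s)\ge\theta_*>0$ for all $s$, so $1/\btheta$ is bounded and the limit functional is well defined.

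Third, I would pass to the limit term by term. For $\tfrac{\lambda}{2}\int|\partials\balpha|^2$ and $\tfrac{\eta}{2}\int|\partials\theta|^2$, convexity gives weak lower semicontinuity (cf.~\cite[Theorem~2.12]{T10}). For the remaining terms convexity is unavailable, so I use the uniform convergence instead: since $\theta_n\to\btheta$ uniformly with $\theta_n\ge\theta_*>0$, we get $\beta/\theta_n\to\beta/\btheta$ uniformly and hence $\int_0^{s_F}\beta/\theta_n\to\int_0^{s_F}\beta/\btheta$. For the tracking term I would write $\int_{D_s}(\cdot)\dx=\int_{\hD}(\cdot)(\hbx+\brho(s))\,d\hbx$ using $D_s=\hD+\brho(s)$, and exploit that $\bv{P}_i\in C^\infty(\overline{\O})$, that $\balpha_n^\top\bv{P}_i\balpha_n\to\bbalpha^\top\bv{P}_i\bbalpha$ and $\brho_i'\theta_n\to\brho_i'\btheta$ uniformly on $\overline{\O}\times[0,s_F]$ (here $\brho\in C^1[0,s_F]$ is used), and that $1/\theta_n$ is uniformly bounded; the integrand then converges uniformly and is dominated, so by dominated convergence
\begin{align*}
&\int_0^{s_F}\frac{1}{2\theta_n}\sum_{i=1}^\dd\|\balpha_n^\top\bv{P}_i\balpha_n-\brho_i'\theta_n\|_{\rL^2(D_s)}^2\,ds\\
&\qquad\longrightarrow\int_0^{s_F}\frac{1}{2\btheta}\sum_{i=1}^\dd\|\bbalpha^\top\bv{P}_i\bbalpha-\brho_i'\btheta\|_{\rL^2(D_s)}^2\,ds.
\end{align*}
Adding the three contributions gives $j=\lim_n\calJs(\balpha_n,\theta_n)\ge\calJs(\bbalpha,\btheta)\ge j$, hence $(\bbalpha,\btheta)$ is a minimizer.

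The main obstacle is precisely the non-convex, non-quadratic dependence on $\theta$: the quotient structure $\|\cdots\theta\|^2/\theta$ together with the term $\beta/\theta$ forbids a purely weak-convergence argument, so the crux is to upgrade the weak $\rH^1$ limits of $\balpha_n$ and $\theta_n$ to uniform limits via the compact embedding $\rH^1\hookrightarrow C$, and to use the lower bound $\theta_*>0$ to keep $1/\theta_n$ under control. Once that is in place — and since the moving domains satisfy $D_s\subset\O$ with the explicit translation structure $D_s=\hD+\brho(s)$ — the passage to the limit in the tracking and penalty terms is by dominated convergence rather than by lower semicontinuity, and the proof closes as above.
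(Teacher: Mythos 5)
Your proposal is correct and follows essentially the same route as the paper: a minimizing sequence bounded in $\rH^1(0,s_F)$ by the box constraints and the regularization terms, weak $\rH^1$ plus strong $C([0,s_F])$ convergence of a subsequence, strong passage to the limit in the tracking and $\beta/\theta$ terms using the lower bound $\theta\ge\theta_*>0$, and weak lower semicontinuity only for the quadratic derivative terms. The paper phrases the tracking-term limit as $\rL^2$ convergence of $\theta_n^{-1/2}(\balpha_n^\top\bv{P}_i\balpha_n-\brho_i'\theta_n)$ rather than dominated convergence of the integrand, but this is the same argument.
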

\begin{proof}
The proof is similar to Theorem~\ref{thm:existence_optimal_control}. Since $\calJs$
is bounded below by zero there exists a minimizing sequence
$\{ (\balpha_n,\theta_n) \}_{n\in \mathbb{N}}$ such that
\[
 j:=\inf_{(\balpha,\theta)\in \calU_{ad}\times \calV_{ad} }
\calJs(\balpha,\theta)= \lim_{n\rightarrow \infty} \calJs(\balpha_n,\theta_n)
\]
which is uniformly bounded in $\calU_{ad}\times \calV_{ad}\subset [\rH^1(0,s_F)]^2$. Hence
 there exists $(\bbalpha,\btheta)\in [\rH^1(0,s_F)]^2$ such that
 $(\balpha_n,\theta_n)\weakto (\bbalpha,\btheta)$ in $\rH^1(0,s_F)$ and
 $ (\balpha_n,\theta_n) \to (\bbalpha,\btheta)$ in $C([0,s_F])$.
We thus arrive at
 \begin{align*}
 \dfrac{1}{\theta_n^{1/2}} \left(\balpha_n^\top\bv{P}_i\balpha_n-\brho_i'\theta_n\right)&\to \dfrac{1}{\btheta^{1/2}} \left(\bbalpha^\top\bv{P}_i\bbalpha-\brho_i'\btheta\right)\quad \mbox{in } \rL^2(0,s_F;\rL^2(D_{s}))\\
\dfrac{\beta}{\theta_n} &\to \dfrac{\beta}{\btheta} \quad \mbox{in } \rL^2(0,s_F).
 \end{align*}
This combined with the weak lower semicontinuity property of
the $\rH^1(0,s_F)$-semi-norm yields
 \[
 \min_{(\balpha,\theta)\in \calU_{ad}\times \calV_{ad} }
 \calJs(\balpha,\theta)\geq\liminf_{n\to\infty}\calJs(\balpha_n,\theta_n)\geq \calJs(\bbalpha,\btheta).
 \]
This completes the proof.
\end{proof}

Before showing a local uniqueness result of problem~\eqref{eq:control_problem_sf},
we state the first order optimality condition which follows from the Fr\'echet differentiability of
$\calJs : \rH^1(0,s_F)\times \rH^1(0,s_F) \rightarrow \mathbb{R}$.
\begin{lemma}[first order optimality condition]\label{var_ineq2}
If $(\bbalpha,\btheta)\in \calU_{ad}\times\calV_{ad}$  denotes an optimal control, given
by Theorem~\ref{thm:existence_optimal_control_sf}, then the first order necessary optimality condition satisfied by
$(\bbalpha,\btheta)$ is
\begin{equation}\label{eq:first_F}
\calJs'(\bbalpha,\btheta)\pair{(\delta\balpha,\delta\theta)}\geq 0\qquad \forall (\balpha,\theta)\in \calU_{ad}\times\calV_{ad}
\end{equation}
where $\delta\balpha=\balpha-\bbalpha$, $\delta\theta=\theta-\btheta$ and
\begin{align*}
\calJs'(\bbalpha,\btheta)&\pair{(\delta\balpha,\delta\theta)} \\
    =&\int_0^{s_F}\int_{D_s}\Bigg(\sum_{i=1}^\dd\left(\dfrac{\bbalpha^\top\bv{P}_i\bbalpha}{\btheta^{1/2}}
    -\brho_i'\btheta^{1/2}\right)\left(2\dfrac{\bbalpha^\top\bv{P}_i\delta\balpha}{\btheta^{1/2}}
    -\dfrac{\bbalpha^\top\bv{P}_i\bbalpha \delta\theta}{2\btheta^{3/2}}- \dfrac{\brho_i'\delta\theta}{2\btheta^{1/2}}\right)\\
    &-\dfrac{\beta\delta\theta}{\btheta^{2}} +\lambda\partials\bbalpha^\top\partials\delta\balpha
    +\eta\partials\btheta\partials\delta\theta\Bigg)d\bx ds.
\end{align*}
\end{lemma}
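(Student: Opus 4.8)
The plan is to mirror the argument behind Lemma~\ref{var_ineq}, the only additional difficulty being the terms of $\calJs$ in which $\theta$ sits in a denominator. First I would make precise the sense in which $\calJs$ is differentiable: although \eqref{eq:F_s} is only finite where $\theta$ is bounded away from zero, the embedding $\rH^1(0,s_F)\subset C([0,s_F])$ shows that $\mathcal O:=\{\theta\in\rH^1(0,s_F):\min_{[0,s_F]}\theta>0\}$ is open, contains $\calV_{ad}$ (there $\theta\ge\theta_*>0$), and that on $\rH^1(0,s_F)\times\mathcal O$ the functional $\calJs$ is well defined and, as I claim, smooth. The convex admissible set $\calU_{ad}\times\calV_{ad}$ lies in this domain, and since $D_s$ deforms smoothly with $D_s\subset\O$, the $\rL^2(D_s)$-norms in \eqref{eq:F_s} depend measurably on $s$.

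Next I would check that $\calJs\in C^1(\rH^1(0,s_F)\times\mathcal O)$ by differentiating \eqref{eq:F_s} term by term. The map $(\balpha,\theta)\mapsto\balpha^\top\bv{P}_i\balpha-\brho_i'\theta$ is a polynomial of degree two in $\balpha$ and affine in $\theta$, hence---using $\bv{P}_i\in C^\infty(\overline\O)$, $\brho_i'\in C[0,s_F]$ and $D_s\subset\O$---a $C^\infty$ map into $\rL^2(0,s_F;\rL^2(D_s))$; the scalar maps $\theta\mapsto\theta^{-1/2},\theta^{-1},\theta^{-3/2},\theta^{-2}$ are Fr\'echet differentiable from $\mathcal O$ into $C([0,s_F])$ with the obvious derivatives, which is the only place the bound $\theta\ge\theta_*$ enters; and the squared $\rL^2(D_s)$-norm together with the regularization terms $\tfrac\lambda2\int_0^{s_F}|\partials\balpha|^2$, $\tfrac\eta2\int_0^{s_F}|\partials\theta|^2$ are bounded quadratic forms. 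Combining these by the product and chain rules---and interchanging differentiation with the integrals over $D_s$ and over $(0,s_F)$, which is legitimate by dominated convergence since on a fixed $\rH^1$-ball the difference quotients are dominated by a fixed $\rL^1$ function built from $\|\bv{P}_i\|_{\rL^\infty(\O)}$, $\|\brho_i'\|_{C[0,s_F]}$, the uniform $C([0,s_F])$-bounds on $\balpha,\theta$, and $\theta_*$---one obtains that $\calJs'(\bbalpha,\btheta)\pair{(\delta\balpha,\delta\theta)}$ is the sum of the derivatives of the individual terms of \eqref{eq:F_s} evaluated at $(\bbalpha,\btheta)$. A short rearrangement---grouping the contribution of $\tfrac1{2\theta}\|\cdot\|_{\rL^2(D_s)}^2$ into the symmetric factored form $\phi_i\btheta^{-1/2}(\cdots)$ with $\phi_i:=\bbalpha^\top\bv{P}_i\bbalpha-\brho_i'\btheta$, and reading off $-\beta\delta\theta\,\btheta^{-2}$, $\lambda\partials\bbalpha^\top\partials\delta\balpha$, $\eta\partials\btheta\partials\delta\theta$ from the remaining terms---turns this into exactly the formula displayed in the statement.

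Finally I would obtain \eqref{eq:first_F}. Fix $(\balpha,\theta)\in\calU_{ad}\times\calV_{ad}$ and set $\delta\balpha=\balpha-\bbalpha$, $\delta\theta=\theta-\btheta$. By convexity of $\calU_{ad}\times\calV_{ad}$ we have $(\bbalpha+\zeta\delta\balpha,\btheta+\zeta\delta\theta)\in\calU_{ad}\times\calV_{ad}$ for all $\zeta\in[0,1]$, so $g(\zeta):=\calJs(\bbalpha+\zeta\delta\balpha,\btheta+\zeta\delta\theta)$ is well defined and differentiable on $[0,1]$ and, since $(\bbalpha,\btheta)$ solves \eqref{eq:control_problem_sf}, attains its minimum at $\zeta=0$; hence $g'(0^+)\ge0$, and the chain rule gives $g'(0)=\calJs'(\bbalpha,\btheta)\pair{(\delta\balpha,\delta\theta)}$, i.e.\ \eqref{eq:first_F}. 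I expect the only genuinely delicate point to be the differentiability of the $\theta$-in-the-denominator terms, which is controlled throughout by the uniform lower bound $\theta\ge\theta_*>0$ supplied by $\rH^1(0,s_F)\hookrightarrow C([0,s_F])$; everything else is the bookkeeping already done for Problem~1.
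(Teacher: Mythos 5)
Your proposal is correct and follows essentially the same route as the paper, which states the lemma without proof as a consequence of the Fr\'echet differentiability of $\calJs$ and the standard first-order condition for minimization over a convex set (as in Lemma~\ref{var_ineq}, cf.\ \cite[Lemma~2.21]{T10}). You simply make explicit the details the paper omits --- in particular that the lower bound $\theta\ge\theta_*>0$ on $\calV_{ad}$, combined with $\rH^1(0,s_F)\hookrightarrow C([0,s_F])$, keeps the $\theta$-in-the-denominator terms smooth --- and your term-by-term computation reproduces the displayed formula for $\calJs'$.
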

Given that the functional $\calJs$ is non-convex, like in Problem~1, the
 following second order sufficient condition guarantees that $(\bbalpha,\btheta)$ is a nondegenerate local minimizer:
  for all $(\balpha,\theta)\in \calU_{ad}\times\calV_{ad}$, there exists
 $\tilde{\omega}>0$ such that $\forall (\delta\balpha,\delta\theta) \in \mathcal{B}(\bbalpha,\btheta)$
\begin{equation}\label{eq:local_unig_0}
\calJs''(\bbalpha,\btheta)\pair{(\delta\balpha,\delta\theta), (\delta\balpha,\delta\theta)}\geq \tilde{\omega}\left(\norm{\delta\theta}{\rH^1(0,s_F)}^2
+\norm{\delta\balpha}{\rH^1(0,s_F)}^2\right) ,
\end{equation}
where
\begin{align*}
\mathcal{B}(\balpha,\theta):=\Big\{ (\bv{h},\widehat\theta) \in [\rH^1(0,T)]^2 &: (\bv{h}(0), \widehat\theta(0)) = (0,0), \\
&(\balpha, \theta)+ \zeta (\bv{h},\widehat\theta)\in \calU_{ad} \times \calV_{ad},  \ \forall  \zeta \in [0,1] \Big\} ,
\end{align*}
is the set of admissible variations of $(\balpha,\theta) \in \calU_{ad} \times \calV_{ad}$.
Consequently, the following lemma, yields the local uniqueness of problem~\eqref{eq:control_problem_sf}.
\begin{lemma}[local uniqueness]\label{lemma:loc_unique_s}
If $(\bbalpha,\btheta)\in \calU_{ad}\times\calV_{ad}$ satisfy the first and second-order
optimality conditions \eqref{eq:first_F} and \eqref{eq:local_unig_0}, then there exist $\tnu$ and $\hat{C}$ such that
\begin{equation}\label{eq:quad_Js}
\calJs(\balpha,\theta)\geq\calJs(\bbalpha,\btheta)+\hat{C}
\left(\norm{\balpha-\bbalpha}{\rH^1(0,s_F)}^2+\norm{\theta-\btheta}{\rH^1(0,s_F)}^2\right)
\qquad\forall (\balpha,\theta) \in\calU_{ad}\times \calV_{ad}
\end{equation}
 with $\norm{\balpha-\bbalpha}{\rH^1(0,s_F)}+\norm{\theta-\btheta}{\rH^1(0,s_F)}\leq \tnu.$
\end{lemma}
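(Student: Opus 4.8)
The plan is to mimic the proof of Lemma~\ref{lemma:loc_unique} in the two--variable setting. First I would record that $\calJs : \rH^1(0,s_F)\times\rH^1(0,s_F)\to\R$ is twice continuously Fr\'echet differentiable on the admissible set; this uses that $\theta$ is bounded below by $\theta_*>0$, so all the $\theta^{-1/2},\theta^{-3/2},\theta^{-2}$ factors appearing in $\calJs'$ and $\calJs''$ are smooth and uniformly bounded, and that $\bv{P}_i\in C^\infty(\overline\O)$, $\brho'\in C[0,s_F]$. Then for fixed $(\bbalpha,\btheta)$ satisfying \eqref{eq:first_F}--\eqref{eq:local_unig_0} and any admissible $(\balpha,\theta)$ with $(\delta\balpha,\delta\theta):=(\balpha-\bbalpha,\theta-\btheta)\in\mathcal{B}(\bbalpha,\btheta)$, Taylor's theorem gives $\xi\in(0,1)$ with
\[
\calJs(\balpha,\theta)=\calJs(\bbalpha,\btheta)+\calJs'(\bbalpha,\btheta)\pair{(\delta\balpha,\delta\theta)}
+\tfrac12\calJs''(\bbalpha,\btheta)\pair{(\delta\balpha,\delta\theta),(\delta\balpha,\delta\theta)}
+\tfrac12 R,
\]
where $R=\big(\calJs''((\bbalpha,\btheta)+\xi(\delta\balpha,\delta\theta))-\calJs''(\bbalpha,\btheta)\big)\pair{(\delta\balpha,\delta\theta),(\delta\balpha,\delta\theta)}$. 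The first-order term is $\geq 0$ by \eqref{eq:first_F}, the second-order term is $\geq\tilde\omega(\|\delta\balpha\|_{\rH^1(0,s_F)}^2+\|\delta\theta\|_{\rH^1(0,s_F)}^2)$ by \eqref{eq:local_unig_0}, so everything reduces to estimating the remainder $R$.

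The core of the argument is a Lipschitz estimate: I would show there is a constant $C_1$, depending on $s_F$, $|D_s|$, $\theta_*$, $\theta^*$, $\sum_i\|\bv{P}_i\|_{\rL^\infty(\O)}$, $\|\brho'\|_{\rL^\infty}$, and the (bounded) sup-norms of $\bbalpha,\btheta$ and $(\balpha,\theta)$, such that
\[
|R|\leq C_1\big(\|\delta\balpha\|_{\rL^2(0,s_F)}+\|\delta\theta\|_{\rL^2(0,s_F)}\big)\big(\|\delta\balpha\|_{\rH^1(0,s_F)}^2+\|\delta\theta\|_{\rH^1(0,s_F)}^2\big).
\]
To obtain this I would write $\calJs''$ schematically and, for each of its terms, form the difference between evaluation at $(\bbalpha,\btheta)+\xi(\delta\balpha,\delta\theta)$ and at $(\bbalpha,\btheta)$. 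Polynomial terms in $\balpha$ (the $\bbalpha^\top\bv{P}_i\bbalpha$, $(\bbalpha^\top\bv{P}_i\delta\balpha)^2$, etc.) are handled exactly as in Lemma~\ref{lemma:loc_unique} by factoring $a^2-b^2=(a-b)(a+b)$ and $a^\top Pa-b^\top Pb=(a-b)^\top Pa+b^\top P(a-b)$, pulling out one factor of $\xi\delta\balpha$ in $\rL^2(0,s_F)$ and bounding the remaining quadratic factors by $\|\delta\balpha\|_{C([0,s_F])}^2\le s_F|\delta\balpha|_{\rH^1(0,s_F)}^2$ via the embedding $\rH^1(0,s_F)\subset C([0,s_F])$. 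The genuinely new ingredient is the $\theta$-dependence through the rational weights $\theta^{-1/2},\theta^{-3/2},\theta^{-2}$: here I would use that the map $z\mapsto z^{-p}$ is Lipschitz on $[\theta_*,\theta^*]$ (with constant $p\,\theta_*^{-p-1}$), so $|\,(\btheta+\xi\delta\theta)^{-p}-\btheta^{-p}\,|\le C|\delta\theta|$ pointwise, which again produces a factor $\|\delta\theta\|_{C([0,s_F])}$ (or $\|\delta\theta\|_{\rL^2}$ after one use of Cauchy--Schwarz) times bounded quantities. Combining, every term of $R$ carries at least one factor of $\delta\balpha$ or $\delta\theta$ measured in $\rL^2(0,s_F)$ (or $C([0,s_F])$, which one absorbs) and the rest bounded in $\rH^1(0,s_F)$, giving the claimed bound.

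Finally I would combine the pieces:
\[
\calJs(\balpha,\theta)\geq\calJs(\bbalpha,\btheta)+\Big(\tfrac{\tilde\omega}{2}-\tfrac{C_1}{2}\big(\|\delta\balpha\|_{\rL^2(0,s_F)}+\|\delta\theta\|_{\rL^2(0,s_F)}\big)\Big)\big(\|\delta\balpha\|_{\rH^1(0,s_F)}^2+\|\delta\theta\|_{\rH^1(0,s_F)}^2\big),
\]
and choose $\tnu$ so small that whenever $\|\delta\balpha\|_{\rH^1(0,s_F)}+\|\delta\theta\|_{\rH^1(0,s_F)}\le\tnu$ the bracket is $\ge\tilde\omega/4=:\hat C$; since the $\rL^2$-norms are dominated by the $\rH^1$-norms this is immediate, and it yields \eqref{eq:quad_Js}. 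One subtlety to check is that the whole segment $(\bbalpha,\btheta)+\xi(\delta\balpha,\delta\theta)$, $\xi\in[0,1]$, stays in $\calU_{ad}\times\calV_{ad}$ so that the rational weights never blow up along the segment and $\calJs''$ is defined there — this holds precisely because $(\delta\balpha,\delta\theta)\in\mathcal{B}(\bbalpha,\btheta)$, by convexity of the admissible set. The main obstacle is bookkeeping: keeping the constants genuinely independent of $(\balpha,\theta)$, which works because $\calU_{ad},\calV_{ad}$ are bounded in $C([0,s_F])$ and $\theta$ is bounded away from $0$; there is no conceptual difficulty beyond the one already met in Lemma~\ref{lemma:loc_unique}, only the extra rational-weight Lipschitz estimates.
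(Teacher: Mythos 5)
Your proposal is correct and follows essentially the same route as the paper, which itself only sketches this proof by appeal to the technique of Lemma~\ref{lemma:loc_unique}: Taylor expansion, nonnegativity of the first-order term from \eqref{eq:first_F}, coercivity from \eqref{eq:local_unig_0}, and a Lipschitz estimate on the difference of second derivatives (the paper states this last bound with $\rH^1$-norms of the increments where you obtain the slightly sharper $\rL^2$-version; either suffices since the smallness hypothesis here is in $\rH^1$). Your treatment of the rational weights $\theta^{-p}$ via Lipschitz continuity on $[\theta_*,\theta^*]$ is exactly the detail the paper leaves implicit.
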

\begin{proof}The estimate \eqref{eq:quad_Js} follows by using the same techniques as in Lemma~\ref{lemma:loc_unique}, namely,
Taylor's expansion, first and second-order optimality conditions \eqref{eq:first_F} and \eqref{eq:local_unig_0},
and the following inequality: for $(\balpha,\theta)$ and $(\balpha_i,\theta_i)\in\calU_{ad}\times\calV_{ad}$, $i=1,2$,
\begin{multline*}
\left(\calJs''(\balpha_1,\theta_1)-\calJs''(\balpha_2,\theta_2)\right)\pair{(\delta\balpha,\delta\theta),(\delta\balpha,\delta\theta)}\\
\leq C\left(\norm{\balpha_1-\balpha_2}{\rH^1(0,s_F)}+\norm{\theta_1-\theta_2}{\rH^1(0,s_F)}\right)
\left(\norm{\delta\balpha}{\rH^1(0,s_F)}^2+\norm{\delta\theta}{\rH^1(0,s_F)}^2\right)
\end{multline*}
where $\delta\balpha = (\balpha-\bbalpha)$, $\delta\theta = (\theta-\btheta)$.
We omit the details for brevity.
\end{proof}
%


\section{Discretization}\label{s:disc}
This section is devoted to the numerical approximation of the
minimization problems \eqref{eq:control_problem_2} and \eqref{eq:control_problem_sf}.
For simplicity, we first introduce a parametrization of the moving domain ($D_t$ and $D_s$)
in terms of a fixed domain $\hD$.
After rewriting the minimization problem in terms of $\hD$,  we will introduce a discrete
formulation and prove $\rH^1$-weak convergence of its solution to a minimizer
of the continuous problem. Such a proof is motivated by $\Gamma$-convergence theory \cite{Maso93,Braides2013}.
Moreover, for problem
\eqref{eq:control_problem_sf} we prove the $\rH^1$-strong  convergence
 of the discrete problem to a local minimizer of \eqref{eq:control_problem_2}.
%

\subsection{Problem 1: Fixed final time}\label{s:dfft}
Given that the domain $D_t$ changes with time, suitable assumptions are
needed in order to define a time discrete approximation of the minimization problem.
  With this in mind,
we define a reference domain
$\hD\subset \R^\dd$ and a map $\bX: [0,T] \times \overline{\hD}\to\overline{\O}$, such that for all $t\in [0,T]$
\begin{align*}
\bX(t,\cdot):&\overline{\hD}\to \overline{D}_t\\
      &\hbx\to \bx = \bX(t,\hbx),
\end{align*}
is a one-to-one correspondence which satisfies $\bX(t,\hD)=D_t$. For simplicity, we assume
\[
\bX(t,\hbx)=\bvarphi(t)+\psi(t)\hbx,
\]
where
\begin{equation}\label{eq:psi_reg}
\bvarphi:[0,T]\to \R^\dd,\,\quad \psi:[0,T]\to (0,+\infty),\quad \bvarphi, \psi \in \rH^1(0,T)
\end{equation}
are functions such that
$\bvarphi(0)=\boldsymbol{0}$ and $\psi(0)=1$, namely $\hD=D_{0}$.
In case $\psi(t)=\widehat{\psi}\in \R^+$ for all $t\in [0,T]$, $\bvarphi$
can be viewed as a parameterization of a desired path that the scaled domain $\widehat{\psi}\widehat{D}$
traverses from an initial position to a final position.
 However, in applications such as magnetic drug targeting,  a general function
 $\psi(t)$ may be needed to control drug spreading.
 Therefore, for $i=1,\dots, \dd$
\begin{multline*}
\int_0^T\int_{D_t}\left(\balpha(t)^\top\bv{P}_i(\bx)\balpha(t)-\rmvel_{i}(t,\bx)\right)^2 d\bx\, dt\\
=
\int_0^T\int_{\hD}\left(\balpha(t)^\top\bv{P}_i(\bX(t,\hbx))\psi(t)^{\dd/2}\balpha(t)-\rmvel_{i}(t,\bX(t,\hbx))\psi(t)^{\dd/2}\right)^2 d\hbx\, dt,
\end{multline*}
because $\det(\nabla_{\hbx}\bX(t,\hbx))=\psi(t)^\dd$.
Then, we rewrite  $\calJ$ as
\begin{equation}\label{eq:calJ_D}
\calJ(\balpha)=\dfrac{1}{2}\sum_{i=1}^\dd\int_0^T\|\balpha^\top\hbP_i\balpha-\hbvel_i\|^2_{\rL^2(\hD)}
+\dfrac{\lambda}{2}\int_0^T|\partialt\balpha|^2=\calJ^1(\balpha)+\calJ^2(\balpha)
\end{equation}
with $\hbP_i(t,\hbx):=\bv{P}_i(\bX(t,\hbx))\psi(t)^{\dd/2}$
and $\hbvel_{i}(t,\hbx)=\rmvel_{i}(t,\bX(t,\hbx))\psi(t)^{\dd/2}$, $i=1,\dots, \dd$.

Next, we introduce a time discretization of the Problem 1 upon using $\calJ$ as defined in \eqref{eq:calJ_D}.
Let us fix $N\in \N$ and let $\Dt := T/N$ be the time step.
Now, for $n = 1,\ldots, N$, we define $t^n := n\Dt$,
$\hbP^n_i=\hbP_i(t^n)$ and $\hbvel^n_{i}$ to be
\begin{equation}\label{eq:discr_v}
\hbvel^n_{i}(\cdot)=\dfrac{1}{\tau}\int_{t^{n-1}}^{t^n}\hbvel_{i}(t,\cdot)dt,\qquad i=1,\ldots,\dd,
\end{equation}
which in turn allows us  to incorporate a general $\bvel$.
Then we consider a time discrete version of \eqref{eq:control_problem_2}: given
the initial condition $\balpha_0=:\bbalpha_\tau(0)$ (cf.~\eqref{eq:ad_set}),
find $\bbalpha_\tau \in \calH^\tau_{ad}$ solving
\begin{align}
\label{eq:control_problem_t_disc}
\bbalpha_\tau=\argmin_{\balpha_\tau\in\calH^\tau_{ad}} \calJ_\tau(\balpha_\tau),
\quad \calJ_\tau(\balpha_\tau)=\calJ^1_\tau(\balpha_\tau)+\calJ^2_\tau(\balpha_\tau),
\end{align}
where
\begin{align*}
\calJ^1_\tau(\balpha_\tau)=
\Dt\sum_{n=1}^N\dfrac{1}{2}\sum_{i=1}^\dd\|(\balpha^n_\tau)^\top
\hbP^n_i\balpha^n_\tau-\hbvel^n_{i}\|^2_{\rL^2(\hD)} ,
\quad
\calJ^2_\tau(\balpha_\tau) =
\Dt\sum_{n=1}^N\dfrac{\lambda}{2\Dt^2}|\balpha^n_\tau-\balpha^{n-1}_\tau|^2\\
\end{align*}
and
$$
\calH^\tau_{ad} := \{\balpha_\tau \in \rH^1(0,T) : \balpha_\tau|_{[t^{n-1},t^n]}\in \mathbb{P}^1, \ n = 1, \dots, N\} \cap \calH_{ad}.
$$
Hereafter $\mathbb{P}^1$ is the space of polynomials of degree at most 1.
Moreover, by applying the same arguments of Theorem~\ref{thm:existence_optimal_control}, it follows that
there exists $\bbalpha_\tau\in \calH^\tau_{ad}$ solution to problem \eqref{eq:control_problem_t_disc}.

%
%
  Notice  that $\calJ$ is a non-convex functional, so convergence of discrete
minimizers  to a continuous one is not immediate.
The convergence of the  discrete scheme is the content of the next result which is motivated by $\Gamma$-convergence.
\begin{theorem}[Problem 1: convergence to global minimizers]\label{thm:Gconv}
The family of global minimizers $\left\{\bbalpha_{\tau}\right\}_{\tau>0}$ to
\eqref{eq:control_problem_t_disc} is uniformly bounded in  $\rH^1(0,T)$
and it contains a subsequence that converges weakly to
$\bbalpha$ in $\rH^1(0,T)$, a global solution to the minimization problem \eqref{eq:control_problem_2},
and $\lim_{\tau\to 0}\calJ_\tau(\bbalpha_\tau)=\calJ(\bbalpha)$.
\end{theorem}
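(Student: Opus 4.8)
The plan is to carry out the three standard ingredients of a $\Gamma$-convergence argument: (i) a uniform bound on the discrete minimizers $\bbalpha_\tau$, (ii) a $\liminf$ (lower bound) inequality along the weakly convergent subsequence, and (iii) a $\limsup$ (recovery sequence) inequality that pins the limit to a global minimizer and forces convergence of energies.

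First I would establish the uniform bound. Testing \eqref{eq:control_problem_t_disc} with the constant competitor $\balpha_\tau \equiv \balpha_0$ (which lies in $\calH^\tau_{ad}$ since $\balpha_* \le \balpha_0 \le \balpha^*$ and has zero discrete time derivative) gives $\calJ_\tau(\bbalpha_\tau) \le \calJ_\tau(\balpha_0) = \calJ^1_\tau(\balpha_0)$, and the right-hand side is bounded independently of $\tau$ because $\hbP_i \in \rL^\infty$, $\hbvel_i^n$ is an average of $\hbvel_i \in \rL^2$, and $\balpha_0$ is fixed. Since the target-tracking term $\calJ^1_\tau \ge 0$, this forces $\calJ^2_\tau(\bbalpha_\tau) = \frac{\lambda}{2}\sum_n \tau^{-1}|\balpha^n_\tau - \balpha^{n-1}_\tau|^2 \le C$, i.e. the $\rL^2$-norm of the (piecewise-constant) derivative $\partialt\bbalpha_\tau$ is bounded; combined with the pointwise box constraints this yields a uniform $\rH^1(0,T)$ bound. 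Hence a subsequence satisfies $\bbalpha_\tau \weakto \bbalpha$ in $\rH^1(0,T)$ and, by the compact embedding \cite[Theorem~9.16]{B11}, $\bbalpha_\tau \to \bbalpha$ in $C([0,T])$; the box constraints pass to the limit and $\balpha(0)=\balpha_0$ survives uniform convergence, so $\bbalpha \in \calH_{ad}$.

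Next, the $\liminf$ inequality: $\liminf_{\tau\to 0}\calJ_\tau(\bbalpha_\tau) \ge \calJ(\bbalpha)$. For $\calJ^1_\tau$ I would use the $C([0,T])$ convergence of $\bbalpha_\tau$, the continuity in $t$ of $\hbP_i$ and $\hbvel_i$ (recall $\hbP_i(t,\hbx)=\bv P_i(\bX(t,\hbx))\psi(t)^{\dd/2}$ with $\bvarphi,\psi\in\rH^1(0,T)\subset C[0,T]$ and $\bv P_i\in C^\infty(\overline\O)$), and the fact that $\hbvel_i^n$ is the local time-average of $\hbvel_i$, to identify the Riemann-type sum defining $\calJ^1_\tau(\bbalpha_\tau)$ with $\int_0^T\frac12\sum_i\|\cdot\|^2$; this is in fact a genuine limit, not merely a $\liminf$. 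For $\calJ^2_\tau$, $\partialt\bbalpha_\tau \weakto \partialt\bbalpha$ in $\rL^2(0,T)$ and weak lower semicontinuity of the $\rL^2$-norm give $\liminf \calJ^2_\tau(\bbalpha_\tau) \ge \calJ^2(\bbalpha)$; adding the two gives the claim. Finally, the recovery/$\limsup$ step: for an \emph{arbitrary} competitor $\balpha\in\calH_{ad}$ I would take $\balpha_\tau = I_\tau\balpha$, the piecewise-linear interpolant of $\balpha$ at the nodes $t^n$ (which lies in $\calH^\tau_{ad}$ because interpolation preserves the box bounds and the initial value, as $\calH_{ad}\subset C[0,T]$), and show $\calJ_\tau(I_\tau\balpha)\to\calJ(\balpha)$: the derivative term converges since $\partialt(I_\tau\balpha)\to\balpha'$ in $\rL^2$ for $\balpha\in\rH^1$, and the tracking term converges by the same $C[0,T]$/time-averaging argument. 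Then
$$
\calJ(\bbalpha) \le \liminf_\tau \calJ_\tau(\bbalpha_\tau) \le \limsup_\tau \calJ_\tau(\bbalpha_\tau) \le \limsup_\tau \calJ_\tau(I_\tau\balpha) = \calJ(\balpha)
$$
using minimality of $\bbalpha_\tau$ at each $\tau$; since $\balpha\in\calH_{ad}$ was arbitrary, $\bbalpha$ is a global minimizer of \eqref{eq:control_problem_2}, and taking $\balpha=\bbalpha$ in the chain collapses it to $\lim_\tau\calJ_\tau(\bbalpha_\tau)=\calJ(\bbalpha)$.

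The main obstacle is the careful handling of the moving-domain time averages in the tracking term: one must verify that replacing $\hbvel_i(t,\cdot)$ by its piecewise-constant average $\hbvel_i^n$ and evaluating the smooth-in-$t$ coefficient $\hbP_i^n$ at the nodes introduces only an error that vanishes as $\tau\to0$, uniformly against the $C[0,T]$-bounded sequence $\bbalpha_\tau$. This is where one needs (absolute) continuity in time of $\hbP_i$ and $\rL^2(0,T;\rL^2(\hD))$-regularity of $\hbvel_i$, i.e. the assumption $\bvarphi,\psi\in\rH^1(0,T)$ in \eqref{eq:psi_reg}; the remaining steps are routine.
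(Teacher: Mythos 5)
Your proposal is correct and follows essentially the same route as the paper's proof: uniform boundedness via the constant competitor $\balpha_0$, the $\liminf$ inequality with the tracking term handled as a genuine limit (via $C([0,T])$ convergence and convergence of the piecewise-constant interpolants of $\hbP_i$, $\hbvel_i$) and the regularization term by weak lower semicontinuity, a recovery sequence built from the piecewise-linear Lagrange interpolant, and the final chain of inequalities yielding global optimality of $\bbalpha$ and convergence of the energies. No gaps.
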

\begin{proof}
We proceed is several steps.
\begin{enumerate}[1.-]
\item \textit{Boundedness of $\left\{\bbalpha_\tau\right\}_{\tau>0}$ in $\rH^1(0,T)$:}
 This follows immediately from the fact that $\bbalpha_{\tau}$ minimizes
$\calJ_{\tau}$ and $\lambda>0$: given that the constant function  $\balpha_0(t)=\balpha_0$  belongs to $\calH^\tau_{ad}$, we have
\[
\calJ_{\tau}(\bbalpha_{\tau})\leq \calJ_{\tau}(\balpha_0)\leq C\left(|\balpha_0|^4+\|\bvel\|^2_{\rL^2(0,T;\rL^2(\O))}\right).
\]
This implies the existence of a (not relabeled) weakly convergent subsequence such that
$\bbalpha_\tau \weakto \bbalpha$  in $\rH^1(0,T)$ and $\bbalpha \in \calH_{ad}$.
It remains to prove that $\bbalpha$ solves \eqref{eq:control_problem_2} and
$\lim_{\tau\to 0}\calJ_\tau(\bbalpha_\tau)=\calJ(\bbalpha)$.
\item \textit{Lower bound inequality}:
We show that
\begin{equation}\label{eq:Lower_J2}
\calJ(\balpha)\leq \underset{\tau\to 0}{\lim \inf}\calJ_\tau(\balpha_\tau).
\end{equation}
for all $\left\{\balpha_\tau\right\}_{\tau>0}\subset \calH^\tau_{ad}$
converging to $\balpha$ weakly in $\rH^1(0,T)$. Consequently
 $\balpha_\tau\to\balpha$ in strongly in $\rL^2(0,T)$ for a
subsequence (not relabeled). If $\overline{\Pi}_\tau\balpha_\tau$
 is the piecewise constant interpolant of $\balpha_\tau$, then
\begin{equation*}
    \| \overline{\Pi}_\tau\balpha_\tau - \balpha \|_{\rL^2(0,T)}
    \le \| \overline{\Pi}_\tau\balpha_\tau - \balpha_\tau \|_{\rL^2(0,T)}
      + \| \balpha_\tau - \balpha \|_{\rL^2(0,T)} \rightarrow 0,
\end{equation*}
because $\{ \partialt\balpha_\tau \}_{\tau>0}$ being bounded implies
$\| \overline{\Pi}_\tau\balpha_\tau - \balpha_\tau \|_{\rL^2(0,T)} \rightarrow 0$.
In view of the smoothness of $\hbP_{i}$, if $\overline{\hbP}_i$ denotes the piecewise
constant interpolation in time then  $\overline{\hbP}_i \rightarrow \hbP_i$ in $\rL^2(0,T; \rL^2(\widehat{D}))$,
 $i = 1, \dots, d$. Collecting these results and using that $\| \balpha_\tau\|_{\rL^\infty(0,T)}\leq \balpha^*$,
 we readily obtain
\begin{equation*}
\overline{\Pi}_\tau\balpha_\tau^\top\overline{\hbP}_i\overline{\Pi}_\tau\balpha_\tau\to\balpha^\top
\hbP_i\balpha\quad \mbox{in  } \rL^2(0,T; \rL^2(\widehat{D})) ,
 \end{equation*}
which in conjunction with the regularity of $\bvel$ leads to
%
\begin{equation}\label{eq:Low_bnd_1}
 \calJ^1_\tau(\balpha_\tau)\to\calJ^1(\balpha).
\end{equation}
On the other hand, given that $\partialt\balpha_\tau$ converges weakly to
 $\partialt\balpha$ in $\rL^2(0,T)$,
from the weak lower semi-continuity of the semi-norm it follows that
\begin{equation}\label{eq:Low_bnd_2}
\calJ^2(\balpha)\leq \underset{\tau\to 0}{\lim \inf}\calJ^2_\tau(\balpha_\tau).
\end{equation}
From \eqref{eq:Low_bnd_1} and \eqref{eq:Low_bnd_2} we conclude
\begin{equation*}
\calJ(\balpha)\leq \underset{\tau\to 0}{\lim \inf}\calJ^1_\tau(\balpha_\tau)+\underset{\tau\to 0}{\lim \inf}\calJ^2_\tau(\balpha_\tau)\leq \underset{\tau\to 0}{\lim \inf}\calJ_\tau(\balpha_\tau).
\end{equation*}
\item \textit{Existence of a recovery sequence}:
Let $\balpha\in\calH_{ad}$ be given. Then, the piecewise linear Lagrange interpolant $\Pi_{\Dt}\balpha$ of $\balpha$,
belongs to $\calH_{ad}^\tau$. Since $\partialt(\Pi_{\Dt}\balpha) \rightarrow \partialt\balpha$ in $\rL^2(0,T)$
and $\Pi_\tau\balpha^\top_{\Dt}\overline{\hbP}_i\Pi_{\Dt}\balpha_\tau\to\balpha^\top\hbP_i\balpha$
in $\rL^2(0,T;\rL^2(\hD))$ because $\|\Pi_{\Dt}\balpha\|_{\rL^\infty(0,T)}\leq \balpha^*$, we obtain
\[
 \underset{\tau\to 0}{\limsup}\calJ_\tau(\Pi_{\Dt}\balpha)\le \underset{\tau\to 0}{\limsup}\calJ^1_\tau(\Pi_{\Dt}\balpha) + \underset{\tau\to 0}{\limsup}\calJ^2_\tau(\Pi_{\Dt}\balpha)\leq\calJ(\balpha).
\]
\item \textit{$\bbalpha$ is a global minimizer for Problem 1:} We need to show
\begin{equation}\label{balpha_min}
 \calJ(\bv{v})\geq \calJ(\bbalpha) \qquad \forall \bv{v}\in \calH_{ad} .
\end{equation}
From step 3 there exists $\{ \bv{v}_\tau \}_{\tau>0}$ such that
$\bv{v}_\tau \rightharpoonup \bv{v}$ in $\rH^1(0,T)$ and
\[
\calJ(\bv{v})
\ge\underset{\tau\to 0}{\lim \sup}\calJ_\tau(\bv{v}_\tau)
\geq \underset{\tau\to 0}{\liminf} \calJ_\tau(\bv{v}_\tau)\geq \underset{\tau\to 0}{\liminf} \calJ_\tau(\bbalpha_\tau)\geq \calJ(\bbalpha)
\]
where we have used that $\bbalpha_\tau$ is a global minimizer for
$\calJ_\tau$ together with \eqref{eq:Lower_J2}.
\item  \textit{Convergence:} Since $\bbalpha_\tau$ is a global minimizer we deduce
$\calJ_\tau(\bbalpha_\tau)\leq \calJ_\tau(\Pi_\tau\bbalpha)$, whence
applying first step 3 and next step 2 we see that
\[
\underset{\tau\to 0}{\lim\sup}\calJ_\tau(\bbalpha_\tau)\leq\underset{\tau\to 0}{\lim\sup}\calJ_\tau(\Pi_\tau\bbalpha)\leq \calJ(\bbalpha)\leq \underset{\tau\to 0}{\liminf} \calJ_\tau(\bbalpha_\tau).
\]
This implies $\lim_{\tau\to 0}\calJ_\tau(\bbalpha_\tau)=\calJ(\bbalpha)$. In addition
 $\left\{\bbalpha_{\tau}\right\}_{\tau>0}$ converges $\rL^2$-strongly and
 $\rH^1$-weakly to $\bbalpha$, a global minimizer of \eqref{eq:control_problem_2}.
\end{enumerate}
This concludes the proof.
\end{proof}

\begin{remark}[quadrature]\rm
In general, numerical integration has to be used to
compute the space integrals in \eqref{eq:control_problem_t_disc}, leading to another
approximation error. For simplicity, we have assumed that we can evaluate the integrals exactly.
\end{remark}

Given that $\calJ$ is non-convex, the solution to \eqref{eq:control_problem_2} might not be unique.
However, the minimizers are locally unique if we assume the second-order optimality condition
\eqref{eq:calJ_quad} (cf.~Lemma~\ref{lemma:loc_unique}).
We also notice that the previous convergence result, \thmref{thm:Gconv}, does not
guarantee the convergence to one of these local minimizers but to a global one.
To rectify this we  follow \cite{CT02a}. Let us first assume that $\bbalpha$ is a
local unique solution to \eqref{eq:control_problem_t}.
For a fixed $\eps>0$, we  construct a family $\left\{\balpha^\eps_\tau\right\}_{\tau>0}$
upon solving the minimization problem
\begin{equation}\label{eq:control_problem_t_disc_eps}
\balpha^\eps_\tau=\argmin_{\balpha_\tau\in\calH^{\tau,\eps}_{ad}} \calJ_\tau(\balpha_\tau)
\end{equation}
in an $\eps-$neighborhood of  $\bbalpha$ denoted by $\calH_{ad}^{\tau,\eps}=\left\{\balpha_\tau\in \calH_{ad}^{\tau}: \norm{\Pi_{\Dt}\bbalpha-\balpha_\tau}{\rL^2(0,T)}\leq \eps\right\}$.

Next, we prove that
$\{ \balpha^\eps_\Dt \}_{\Dt > 0}$ forms the local solution to the problem \eqref{eq:control_problem_t_disc}.
First, we recall the definition of $\nu$ from \eqref{eq:quad_J} and define
$\bar{\eps}:=\dfrac{\nu}{2}$. It follows from the definition of $\Pi_\Dt$
that there exist a $\Dt_0 > 0$  such that for every $\tau \le \Dt_0$ we have
$\norm{\Pi_{\Dt}\bbalpha-\bbalpha}{\rL^2(0,T)} \le \dfrac{\nu}{2}$. Then,
for a given $\balpha \in \calH_{ad}^{\tau,\eps}$, $\eps\le \bar{\eps}$,
and $\tau \le  \tau_0$ we obtain
\[
\norm{\balpha-\bbalpha}{\rL^2(0,T)}\leq \norm{\Pi_{\Dt}\bbalpha - \balpha}{\rL^2(0,T)}
 + \norm{\Pi_{\Dt}\bbalpha-\bbalpha}{\rL^2(0,T)} \leq \nu.
\]
Finally, in view of \eqref{eq:quad_J} we arrive at
\begin{equation}\label{eq:calJ_quadUeps}
\calJ(\balpha)\geq\calJ(\bbalpha)+\hat{C}\norm{\balpha-\bbalpha}{\rH^1(0,T)}^2\qquad\forall \balpha\in \calH_{ad}^{\tau,\eps}.
\end{equation}
Such a quadratic behavior enables us to prove the following convergence
result for the family $\left\{\balpha^\eps_\tau\right\}_{\tau>0}$ solving \eqref{eq:control_problem_t_disc_eps}.
\begin{lemma}[Problem 1: convergence to local minimizers] \label{lem:aux_conv}
Let $\bbalpha \in \calH_{ad}$ be a local unique minimizer of \eqref{eq:PJ1}.
If $\eps\leq \bar{\eps}$ and $\{ \balpha_\tau^\eps \}_{\Dt > 0}$ solves
\eqref{eq:control_problem_t_disc_eps}, then there exists a $\Dt_0 > 0$ such that for all
$\Dt \leq \Dt_0$, $\calJ_{\tau}(\balpha_\tau^\eps) \rightarrow \calJ(\bbalpha)$
and $\norm{\balpha_\tau^\eps-\bbalpha}{\rH^1(0,T)}\to 0$ as $\Dt \rightarrow 0$.
Moreover, $\balpha^\eps_\tau$ is a local solution of \eqref{eq:control_problem_t_disc},
i.e
\begin{equation}\label{eq:local_ineq}
\calJ_{\tau}(\balpha_\tau^\eps) \leq \calJ_{\tau}(\balpha_\tau)
\end{equation}
for all $\balpha_\tau\in \calH_{ad}^\tau$ such that $\norm{\balpha_\tau^\eps-\balpha_\tau}{\rL^2(0,T)}\leq \eps/2$.
\end{lemma}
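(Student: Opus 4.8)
\textit{Plan.} The idea is to recycle the $\Gamma$-convergence toolkit already assembled for Theorem~\ref{thm:Gconv} --- the recovery-sequence estimate and the uniform $\rH^1(0,T)$ bound --- and combine it with the quadratic growth \eqref{eq:calJ_quadUeps}, which is exactly the role played here by the local uniqueness Lemma~\ref{lemma:loc_unique}. First I would record the upper bound and compactness. Since for every $\tau$ the interpolant $\Pi_{\Dt}\bbalpha$ lies in $\calH_{ad}^{\tau,\eps}$ (because $\norm{\Pi_{\Dt}\bbalpha-\Pi_{\Dt}\bbalpha}{\rL^2(0,T)}=0\le\eps$) and $\balpha_\tau^\eps$ minimizes $\calJ_\tau$ over $\calH_{ad}^{\tau,\eps}$, we get $\calJ_\tau(\balpha_\tau^\eps)\le\calJ_\tau(\Pi_{\Dt}\bbalpha)$; as in step~1 of Theorem~\ref{thm:Gconv} the right-hand side is bounded uniformly in $\tau$, and since $\lambda>0$ this forces $\{\balpha_\tau^\eps\}_{\tau>0}$ to be bounded in $\rH^1(0,T)$, while $\norm{\balpha_\tau^\eps}{\rL^\infty(0,T)}\le\balpha^*$ follows from $\balpha_\tau^\eps\in\calH_{ad}$. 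The recovery-sequence argument of step~3 of Theorem~\ref{thm:Gconv} (with $\Pi_{\Dt}\bbalpha$ the recovery sequence of $\bbalpha$) gives $\limsup_{\tau\to0}\calJ_\tau(\Pi_{\Dt}\bbalpha)\le\calJ(\bbalpha)$, hence $\limsup_{\tau\to0}\calJ_\tau(\balpha_\tau^\eps)\le\calJ(\bbalpha)$.

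Next I would pass from $\calJ_\tau$ to $\calJ$ by a consistency estimate and extract strong convergence. Exactly as in step~2 of Theorem~\ref{thm:Gconv}, the bounds $\norm{\partialt\balpha_\tau^\eps}{\rL^2(0,T)}\le C$ and $\norm{\balpha_\tau^\eps}{\rL^\infty(0,T)}\le\balpha^*$, together with $\overline{\hbP}_i\to\hbP_i$ in $\rL^2(0,T;\rL^2(\hD))$ and convergence of the cell averages $\hbvel_i^n$ towards $\hbvel_i$, give $|\calJ(\balpha_\tau^\eps)-\calJ_\tau(\balpha_\tau^\eps)|\to0$ (the quadratic-in-$\balpha$ parts are compared through a difference-of-squares estimate in $\rL^2(0,T;\rL^2(\hD))$, and the $\rH^1$-seminorm parts of $\calJ$ and $\calJ_\tau$ coincide on piecewise-linear functions). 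Hence $\limsup_{\tau\to0}\calJ(\balpha_\tau^\eps)\le\calJ(\bbalpha)$. On the other hand, for $\tau\le\tau_0$ (so that $\norm{\Pi_{\Dt}\bbalpha-\bbalpha}{\rL^2(0,T)}\le\nu/2$) and $\eps\le\bar\eps=\nu/2$ we have $\calH_{ad}^{\tau,\eps}\subset\{\balpha:\norm{\balpha-\bbalpha}{\rL^2(0,T)}\le\nu\}$, so \eqref{eq:calJ_quadUeps} applies to $\balpha_\tau^\eps$ and yields $\hat C\norm{\balpha_\tau^\eps-\bbalpha}{\rH^1(0,T)}^2\le\calJ(\balpha_\tau^\eps)-\calJ(\bbalpha)$. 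Since the right-hand side is nonnegative but has nonpositive $\limsup$, we conclude $\calJ(\balpha_\tau^\eps)\to\calJ(\bbalpha)$, hence also $\calJ_\tau(\balpha_\tau^\eps)\to\calJ(\bbalpha)$, and $\norm{\balpha_\tau^\eps-\bbalpha}{\rH^1(0,T)}\to0$.

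Finally I would verify the local-minimality property \eqref{eq:local_ineq}. From $\norm{\balpha_\tau^\eps-\Pi_{\Dt}\bbalpha}{\rL^2(0,T)}\le\norm{\balpha_\tau^\eps-\bbalpha}{\rL^2(0,T)}+\norm{\bbalpha-\Pi_{\Dt}\bbalpha}{\rL^2(0,T)}\to0$, after possibly shrinking $\tau_0$ we have $\norm{\balpha_\tau^\eps-\Pi_{\Dt}\bbalpha}{\rL^2(0,T)}\le\eps/2$ for every $\tau\le\tau_0$; that is, $\balpha_\tau^\eps$ sits strictly inside the $\eps$-ball defining $\calH_{ad}^{\tau,\eps}$. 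Consequently, for any $\balpha_\tau\in\calH_{ad}^\tau$ with $\norm{\balpha_\tau^\eps-\balpha_\tau}{\rL^2(0,T)}\le\eps/2$, the triangle inequality gives $\norm{\Pi_{\Dt}\bbalpha-\balpha_\tau}{\rL^2(0,T)}\le\eps$, so $\balpha_\tau\in\calH_{ad}^{\tau,\eps}$ and the minimality of $\balpha_\tau^\eps$ over this set yields $\calJ_\tau(\balpha_\tau^\eps)\le\calJ_\tau(\balpha_\tau)$, which is \eqref{eq:local_ineq}.

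\textit{Main obstacle.} The only step with genuine content is the consistency estimate $|\calJ(\balpha_\tau^\eps)-\calJ_\tau(\balpha_\tau^\eps)|\to0$ for the $\rH^1$-bounded family, which is a straightforward but somewhat technical replay of the interpolation arguments in Theorem~\ref{thm:Gconv}; everything else is an application of the already-established growth \eqref{eq:calJ_quadUeps} together with elementary geometry of balls in $\rL^2(0,T)$. The point requiring care is that the $\eps$-neighborhood of $\Pi_{\Dt}\bbalpha$ must sit inside the $\nu$-neighborhood of $\bbalpha$ where \eqref{eq:quad_J} holds --- this is precisely why $\bar\eps$ was set to $\nu/2$ --- and that a single threshold $\tau_0$ can be chosen to serve both this inclusion and the strict interiority used in the last step.
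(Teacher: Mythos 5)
Your proposal is correct and follows essentially the same route as the paper: the upper bound from the optimality of $\balpha_\tau^\eps$ over $\calH_{ad}^{\tau,\eps}\ni\Pi_{\Dt}\bbalpha$, the consistency estimate $|\calJ_\tau-\calJ|\to 0$, the quadratic growth \eqref{eq:calJ_quadUeps} to squeeze out both $\calJ_\tau(\balpha_\tau^\eps)\to\calJ(\bbalpha)$ and the $\rH^1$ convergence, and the same triangle-inequality argument for \eqref{eq:local_ineq}. The only difference is cosmetic ordering (you establish the $\limsup$ bound first and then invoke the growth estimate, whereas the paper sandwiches $\calJ_\tau(\balpha_\tau^\eps)$ between two inequalities simultaneously), plus your slightly more explicit justification of the consistency estimate, which the paper merely asserts.
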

\begin{proof} Notice that, because of the regularity assumption \eqref{eq:psi_reg}
on $\bvarphi$ and $\psi$, it is straightforward to prove that
\begin{gather}
\begin{aligned}
|\calJ_{\tau}(\balpha_\tau)-\calJ(\balpha_\tau)|&\to 0\qquad \forall \balpha_\tau \in \calH_{ad}^\tau,\label{eq:cv_1} \\
\calJ(\Pi_{\Dt}\balpha)&\to\calJ(\balpha)\qquad \forall \balpha \in \calH_{ad} . 
\end{aligned}
\end{gather}
On the other hand, from \eqref{eq:calJ_quadUeps} we obtain
\begin{equation}\label{eq:ineq_1}
\begin{aligned}
\calJ_{\tau}(\balpha_\tau^\eps)
&\geq\calJ(\bbalpha)+ (\calJ_{\tau}(\balpha_\tau^\eps)-\calJ(\balpha_\tau^\eps))+\hat{C}\norm{\balpha_\tau^\eps-\bbalpha}{\rH^1(0,T)}^2 \\
&\geq \calJ(\bbalpha)- |\calJ_{\tau}(\balpha_\tau^\eps)-\calJ(\balpha_\tau^\eps)|.
\end{aligned}
\end{equation}
Moreover, given that $\Pi_{\Dt}\bbalpha\in\calH_{ad}^{\tau,\eps}$,
from the optimality of $\balpha_\tau^\eps$  it follows that
\begin{align}\label{eq:ineq_2}
\calJ_\tau(\balpha_\tau^\eps)\leq&\calJ_\Dt(\Pi_{\Dt}\bbalpha)
= \calJ(\bbalpha) + \left(\calJ_\tau(\Pi_{\Dt}\bbalpha)-\calJ(\Pi_{\Dt}\bbalpha)\right)
+ \left(\calJ(\Pi_{\Dt}\bbalpha)-\calJ(\bbalpha)\right).
\end{align}
Then, from \eqref{eq:cv_1}-\eqref{eq:ineq_2} we obtain
\begin{equation}\label{eq:ineq_2b}
\calJ_\tau(\balpha_\tau^\eps)\to\calJ(\bbalpha) ,
\end{equation}
which is the first assertion. The second assertion $\norm{\balpha_\tau^\eps-\bbalpha}{\rH^1(0,T)}^2 \rightarrow 0$ is now a
trivial consequence of the middle inequality in \eqref{eq:ineq_1}
together with \eqref{eq:cv_1} and \eqref{eq:ineq_2b}.
It remains to show \eqref{eq:local_ineq}.
Let $\balpha_\tau\in \calH_{ad}^\tau$ satisfy $\norm{\balpha_\tau-\balpha_\tau^\eps}{\rL^2(0,T)}\leq \dfrac{\eps}{2}$.
Then for  $\tau$ small enough we arrive at
\[
\norm{\balpha_\tau-\Pi_{\Dt}\bbalpha}{\rL^2(0,T)}\leq\norm{\balpha_\tau-\balpha_\tau^\eps}{\rL^2(0,T)}+
\norm{\bbalpha-\balpha_\tau^\eps}{\rL^2(0,T)}+\norm{\Pi_{\Dt}\bbalpha-\bbalpha}{\rL^2(0,T)}\leq\eps .
\]
Thus, $\balpha_\tau$ belongs to $\calH_{ad}^{\tau,\eps}$  and
 \eqref{eq:local_ineq} follows from \eqref{eq:control_problem_t_disc_eps}.
\end{proof}


\subsection{Problem 2: Minimizing the final time}
Like in Section~\ref{s:dfft}, in order to handle the first term in \eqref{eq:F_s},
we consider a parametrization of the moving domain $D_s$   but now defined in terms of the arc length $s$, namely
\[
\bX(s,\hbx)=\brho(s)+\widetilde{\psi}(s)\hbx \qquad \forall \hbx\in \hD, \quad 0\leq s\leq s_F,
\]
where $\widetilde{\psi}$ belongs to $\rH^1(0,s_F)$ and $\brho\in C^1[0,s_F]$. Then, the functional $\calJs$ in \eqref{eq:F_s} reduces to
\begin{align*}
\calJs(&\balpha,\theta)=\calJs^1(\balpha,\theta)+\calJs^2(\theta)+\calJs^3(\alpha)
+\calJs^4(\theta)
\end{align*}
with
\begin{align*}
\calJs^1(\balpha,\theta)&=\int_0^{s_F}\dfrac{1}{2\theta(s)}\sum_{i=1}^\dd
\|\balpha(s)^\top\wbP_i\balpha(s)-\brho_i'(s)\wtheta(s)\|^2_{\rL^2(\hD)} ds, \\
 \calJs^2(\theta)&=\int_0^{s_F}\dfrac{\beta}{\theta(s)} ds , \quad
 \calJs^3(\balpha)=\int_0^{s_F}\dfrac{\lambda}{2}|\partials\balpha(s)|^2 ds, \quad
\calJs^4(\theta)=\int_0^{s_F}\dfrac{\eta}{2}|\partials\theta(s)|^2 ds
\end{align*}
and $\wbP_i(\hbx,s)=\bv{P}_i(\bX(s,\hbx))\widetilde{\psi}^{\dd/2}(s)$, $i=1,\ldots, \dd$,
$\wtheta(s) = \theta(s) \widetilde{\psi}^{\dd/2}(s)$.
Notice that now the $\rL^2$-norm in $\calJs^1$ is computed on $\hD$ instead of $D_s$.
Then we introduce a space discretization of \eqref{eq:F_s} by taking into
account the previous definition of  $\calJs$. Let us fix $M\in \N$ and set $\Ds := s_F/M$.
Now, for $m = 1,\ldots, M$, we define $s^m := m\Ds$, $\balpha^m:=\balpha(s^m)$,
$\theta^m:=\theta(s^m)$, $\widetilde\theta^m:=\widetilde\theta(s^m)$, and $\wbP^m_i=\wbP_i(s^m),$ $i=1,\ldots, \dd$.
Then we consider the following discrete problem: given  an initial condition
$(\balpha_0,\theta_0)=:(\balpha_\Ds(0),\theta_\Ds(0))$
find a solution $(\balpha_\Ds,\theta_\Ds)\in \calU^\Ds_{ad}\times\calV^\Ds_{ad}$ to
\begin{align}
\label{eq:control_problem_s_disc}
\min_{(\balpha_\Ds,\theta_\Ds)\in\calU^\Ds_{ad}\times\calV^\Ds_{ad}}
\calJs_{\Ds}(\balpha_\Ds,\theta_\Ds),
\quad  \calJs_{\Ds}(\balpha_\Ds,\theta_\Ds)=\calJs_\Ds^1(\balpha_\Ds,\theta_\Ds)
+\calJs_\Ds^2(\theta_\Ds)+\calJs_\Ds^3(\balpha_\Ds)+\calJs_\Ds^4(\theta_\Ds)
\end{align}
where
\begin{align*}
\calJs_\Ds^1(\balpha_\Ds,\theta_\Ds)&=\sum_{m=1}^M\dfrac{\Ds}{2\theta^m_\Ds} \sum_{i=1}^\dd\|(\balpha^m_\Ds)^\top\wbP^m_i\balpha^m_\Ds-\brho'_i(s^m)\wtheta^m_\Ds\|^2_{\rL^2(\hD)} , \quad
   \calJs_\Ds^2(\theta_\Ds)=\sum_{m=1}^M\dfrac{\beta\Ds}{\theta^m_\Ds}, \\
 \calJs_\Ds^3(\balpha_\Ds)&=\Ds\sum_{m=1}^M\dfrac{\lambda}{2\Ds^2}|\balpha^m_\Ds-\balpha^{m-1}_\Ds|^2, \quad
  \calJs_\Ds^4(\theta_\Ds)=\Ds\sum_{m=1}^M\dfrac{\eta}{2\Ds^2}|\theta^m_\Ds-\theta^{m-1}_\Ds|^2 ,
  \end{align*}
and admissible sets
\begin{align*}
\calU^\Ds_{ad} &:= \set{\balpha_\Ds \in \rH^1(0,s_F) : \balpha_\Ds|_{[s^{m-1},s^m]}\in \mathbb{P}^1, m = 1,\ldots, M
}\cap\calU_{ad}, \\
\calV^\Ds_{ad} &:= \left\{\theta_\Ds \in \rH^1(0,s_F) : \theta_\Ds|_{[s^{m-1},s^m]}\in \mathbb{P}^1, m = 1,\ldots, M
\right\}\cap\calV_{ad}.
\end{align*}
Since \eqref{eq:control_problem_s_disc} is a finite dimensional problem, the box constrains
and the continuity of $\calJs_{\Ds}$ implies the existence of a solution
$\left(\bbalpha_\Ds,\btheta_\Ds\right)$ to \eqref{eq:control_problem_s_disc}.
\begin{theorem}[Problem 2: convergence to global minimizers]\label{thm:Gconv_pb2}
The family of global minimizers $\left\{\left(\bbalpha_{\Ds},
\btheta_\Ds\right)\right\}_{\Ds>0}$ to \eqref{eq:control_problem_s_disc} is uniformly bounded
and it contains a subsequence that converges $\rH^1(0,s_F)$-weak to
$ \left(\bbalpha, \btheta\right)$, a global minimizer of \eqref{eq:control_problem_sf},
and $\lim_{\Ds\to 0}\calJs_\Ds(\bbalpha_{\Ds}, \btheta_\Ds)=\calJs(\bbalpha, \btheta)$.
\end{theorem}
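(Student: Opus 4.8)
The plan is to re-run the $\Gamma$-convergence argument of Theorem~\ref{thm:Gconv}, now for the coupled functional $\calJs$, with special care for the singular terms $1/\theta$; I would organize it into the same five steps. \emph{Boundedness.} The constant pair $(\balpha_0,\theta_0)$, extended constantly in $s$, lies in $\calU^\Ds_{ad}\times\calV^\Ds_{ad}$ and $\theta_0\ge\theta_*>0$, so evaluating $\calJs_\Ds$ there gives $\calJs_\Ds(\bbalpha_\Ds,\btheta_\Ds)\le C$ with $C$ independent of $\Ds$; since $\lambda,\eta>0$ this bounds the $\rH^1(0,s_F)$-seminorms of $\bbalpha_\Ds$ and $\btheta_\Ds$, while the box constraints bound their $\rL^\infty$ (hence $\rL^2$) norms. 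Extract a subsequence with $(\bbalpha_\Ds,\btheta_\Ds)\weakto(\bbalpha,\btheta)$ in $\rH^1(0,s_F)$; the compact embedding $\rH^1(0,s_F)\subset C([0,s_F])$ (\cite[Theorem~9.16]{B11}) then gives uniform convergence, so $(\bbalpha,\btheta)\in\calU_{ad}\times\calV_{ad}$ and, crucially, $\btheta\ge\theta_*>0$.

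\emph{Liminf inequality and recovery sequence.} For any $\{(\balpha_\Ds,\theta_\Ds)\}\subset\calU^\Ds_{ad}\times\calV^\Ds_{ad}$ with $(\balpha_\Ds,\theta_\Ds)\weakto(\balpha,\theta)$ in $\rH^1(0,s_F)$, the compact embedding yields $\balpha_\Ds\to\balpha$ and $\theta_\Ds\to\theta$ in $C([0,s_F])$, and since $\theta_\Ds\ge\theta_*>0$ also $1/\theta_\Ds\to1/\theta$ uniformly, hence $\calJs_\Ds^2(\theta_\Ds)\to\calJs^2(\theta)$. For $\calJs_\Ds^1$ I would combine the strong $\rL^2(0,s_F)$ (and a.e.) convergence of the piecewise-constant interpolants $\overline{\Pi}_\Ds\balpha_\Ds\to\balpha$, $\overline{\Pi}_\Ds\theta_\Ds\to\theta$, together with $\overline{\Pi}_\Ds\theta_\Ds\ge\theta_*>0$ so that $1/(\overline{\Pi}_\Ds\theta_\Ds)\to1/\theta$ a.e. and remains bounded, the convergence $\overline{\wbP}_i\to\wbP_i$ in $\rL^2(0,s_F;\rL^2(\hD))$ (from the smoothness of $\wbP_i$, exactly as for $\overline{\hbP}_i$ in Theorem~\ref{thm:Gconv}), the uniform bound $\norm{\balpha_\Ds}{\rL^\infty(0,s_F)}\le\balpha^*$, and the continuity of $\brho'$; dominated convergence then gives $\calJs_\Ds^1(\balpha_\Ds,\theta_\Ds)\to\calJs^1(\balpha,\theta)$. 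Finally $\partials\balpha_\Ds\weakto\partials\balpha$ and $\partials\theta_\Ds\weakto\partials\theta$ in $\rL^2(0,s_F)$, so weak lower semicontinuity of the seminorm (\cite[Theorem~2.12]{T10}) gives $\calJs^3(\balpha)+\calJs^4(\theta)\le\liminf(\calJs_\Ds^3(\balpha_\Ds)+\calJs_\Ds^4(\theta_\Ds))$; summing the four contributions yields $\calJs(\balpha,\theta)\le\liminf_{\Ds\to0}\calJs_\Ds(\balpha_\Ds,\theta_\Ds)$. For the recovery sequence, given $(\balpha,\theta)\in\calU_{ad}\times\calV_{ad}$ I take the piecewise linear Lagrange interpolants $(\Pi_\Ds\balpha,\Pi_\Ds\theta)\in\calU^\Ds_{ad}\times\calV^\Ds_{ad}$ (box constraints, initial values, and $\Pi_\Ds\theta\ge\theta_*>0$ are preserved); since $\partials(\Pi_\Ds\balpha)\to\partials\balpha$, $\partials(\Pi_\Ds\theta)\to\partials\theta$ in $\rL^2(0,s_F)$ and $\Pi_\Ds\balpha\to\balpha$, $\Pi_\Ds\theta\to\theta$ uniformly (whence $1/(\Pi_\Ds\theta)\to1/\theta$ uniformly), the same product arguments give convergence of each discrete term, hence $\limsup_{\Ds\to0}\calJs_\Ds(\Pi_\Ds\balpha,\Pi_\Ds\theta)\le\calJs(\balpha,\theta)$.

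\emph{Optimality and energy convergence.} These last two steps are verbatim those of Theorem~\ref{thm:Gconv}: for arbitrary $(\bv v,\vartheta)\in\calU_{ad}\times\calV_{ad}$ with recovery sequence $(\bv v_\Ds,\vartheta_\Ds)$ one has $\calJs(\bv v,\vartheta)\ge\limsup\calJs_\Ds(\bv v_\Ds,\vartheta_\Ds)\ge\liminf\calJs_\Ds(\bbalpha_\Ds,\btheta_\Ds)\ge\calJs(\bbalpha,\btheta)$, using the global minimality of $(\bbalpha_\Ds,\btheta_\Ds)$ for $\calJs_\Ds$ and the liminf inequality, so $(\bbalpha,\btheta)$ is a global minimizer of \eqref{eq:control_problem_sf}; taking $(\bv v,\vartheta)=(\bbalpha,\btheta)$ with recovery sequence $(\Pi_\Ds\bbalpha,\Pi_\Ds\btheta)$ and sandwiching gives $\lim_{\Ds\to0}\calJs_\Ds(\bbalpha_\Ds,\btheta_\Ds)=\calJs(\bbalpha,\btheta)$. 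The main obstacle, and the only substantive difference from Theorem~\ref{thm:Gconv}, is the singular dependence on $\theta$: the uniform lower bound $\theta\ge\theta_*>0$ must be carried through \emph{every} limit passage (for $\theta_\Ds$, for $\overline{\Pi}_\Ds\theta_\Ds$, and for $\Pi_\Ds\theta$), since it is precisely what upgrades weak $\rH^1$ convergence plus the compact embedding into convergence of $1/\theta$ and hence the continuity of $\calJs^1$ and $\calJs^2$ along the sequences; the quadratic-in-$\balpha$, reciprocal-in-$\theta$ structure of $\calJs^1$ and the presence of $\brho'\in C[0,s_F]$ cause no further trouble because every factor converges uniformly or strongly in $\rL^2$.
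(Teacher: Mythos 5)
Your proposal is correct and follows essentially the same five-step $\Gamma$-convergence argument as the paper's proof (boundedness via the constant competitor $(\balpha_0,\theta_0)$, liminf inequality through piecewise-constant interpolants, recovery by Lagrange interpolation, then the comparison and sandwich arguments of Theorem~\ref{thm:Gconv}). Your explicit tracking of the uniform lower bound $\theta\ge\theta_*>0$ through each limit passage for the $1/\theta$ terms is exactly the point the paper handles implicitly via the box constraints, so there is no substantive difference.
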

\begin{proof}
As in Theorem~\ref{thm:Gconv} we proceed in several steps.
\begin{enumerate}[1.-]
\item \textit{Boundedness of $\left\{(\bbalpha_{\Ds}, \btheta_\Ds)\right\}_{\Ds}$ in
$[\rH^1(0,s_F)]^2$}:
 Given that the constant function  $\left(\balpha_0(s),\theta_0(s)\right)
=(\balpha_0,\theta_0)$  belongs to $\calU^\Ds_{ad}\times\calV^\Ds_{ad}$
and the fact that $\left(\bbalpha_{\Ds}, \btheta_\Ds\right)$ minimizes
$\calJs_{\Ds}$ we have the bound
\[
\calJs_{\Ds}(\bbalpha_{\Ds},\btheta_{\Ds})\leq \calJs_{\Ds}(\balpha_0,\theta_0)
\leq C\left(\dfrac{|\balpha_0|^4}{|\theta_0|}+|\theta_0| +\dfrac{1}{|\theta_0|}\right).
\]
Since $\lambda,\eta >0$, this implies a uniform bound for $(\bbalpha_{\Ds}, \btheta_\Ds)$ in $\rH^1(0,s_F)$ and
the existence of a (not relabeled)  subsequence that converge to $(\bbalpha, \btheta)$
weakly in $[\rH^1(0,s_F)]^2$ and strongly in $[C([0,s_F])]^2$.

\item \textit{Lower bound inequality}: Let us consider a sequence
$\left\{(\balpha_\Ds,\theta_\Ds)\right\}_{\Ds>0}\subset \calU^\Ds_{ad}\times\calV^\Ds_{ad}$ that
converges weakly to $(\balpha,\theta)$ in $[\rH^1(0,s_F)]^2$.  Let
 $\overline{\Pi}_\Ds$ be the piecewise constant interpolation operator
 at the nodes $\{s^m\}_{m>0}$. Then by straightforward computations
 it follows that
\begin{equation*}
    \| \overline{\Pi}_\Ds\balpha_\Ds - \balpha \|_{\rL^2(0,s_f)}
     \rightarrow 0 \quad \mbox{and} \quad \| \overline{\Pi}_\Ds\theta_\Ds - \theta \|_{\rL^2(0,s_f)} \rightarrow 0.
\end{equation*}
Moreover, from the smoothness of $\wbP_{i}$ and $\brho'$,
if $\overline{\wbP}_i$, $\overline{\brho'}$
denote the piecewise constant interpolation in time then
$\overline{\brho'}\to \brho'$ and $\overline{\wbP}_i\rightarrow \wbP_i$ in $\rL^2(0,s_f;\rL^2(\hD))$, for
 $i = 1, \dots, r$. This, together with the box constrains of $(\bbalpha_{\Ds}, \btheta_\Ds)$, leads to
\begin{gather}
\begin{aligned}\label{eq:conv_F12}
\dfrac{\overline{\Pi}_\kappa\balpha_\Ds^\top\overline{\wbP}_i\overline{\Pi}_\kappa\balpha_\Ds}{\overline{\Pi}_\kappa\theta_\Ds^{1/2}}
-\overline{\Pi}_\kappa\wtheta_\Ds^{1/2}\overline{\brho}'
&\to\dfrac{\balpha^\top\wbP_i\balpha}{\theta^{1/2}}-\wtheta^{1/2}{\brho}' \quad \mbox{in  }  \rL^2(0,s_f;\rL^2(\hD))\\
\dfrac{1}{\overline{\Pi}_\kappa\theta_\Ds}&\to\dfrac{1}{\theta}\quad \mbox{in  }  \rL^2(0,s_f) ,
 \end{aligned}
 \end{gather}
which imply
\begin{equation}\label{eq:Low_bnds_1}
\calJs^1(\balpha,\theta)= \underset{\Ds\to 0}{\lim} \calJs^1_\Ds(\balpha_\Ds,\theta_\Ds) \quad \mbox{and}\quad
\calJs^2(\theta)= \underset{\Ds\to 0}{\lim} \calJs^2_\Ds(\theta_\Ds).
\end{equation}
On the other hand, given that $\partials\balpha_\Ds$ and
$\partials\theta_\Ds$  converge weakly to
 $\partials\theta$ in $\rL^2(0,s_F)$ and $\partials\balpha$ , respectively,
 and $\calJs^3(\balpha_\Ds)=\calJs^3_\Ds(\balpha_\Ds)$, $\calJs^4(\balpha_\Ds)=\calJs^4_\Ds(\balpha_\Ds)$,
invoking the weak lower semi-continuity of the $\rH^1(0,T)$ semi-norm we obtain
\begin{equation}\label{eq:Low_bnds_2}
\calJs^3(\balpha)\leq \underset{\Ds\to 0}{\lim \inf}\calJs^3_\Ds(\balpha_\Ds)\quad \mbox{and}\quad
\calJs^4(\theta)\leq  \underset{\Ds\to 0}{\lim \inf}\calJs^4_\Ds(\theta_\Ds).
\end{equation}
Therefore, from \eqref{eq:Low_bnds_1}-\eqref{eq:Low_bnds_2} we conclude that
\begin{align*}
\calJs(\balpha,\theta) &\leq \underset{\Ds\to 0}{\lim \inf}\calJs^1_\Ds(\balpha_\Ds,\theta_\Ds)+\underset{\Ds\to 0}{\lim \inf}\calJs^2_\Ds(\theta_\Ds)\\
 &+\underset{\Ds\to 0}{\lim \inf}\calJs^3_\Ds(\balpha_\Ds)+\underset{\Ds\to 0}{\lim \inf}\calJs^4_\Ds(\theta_\Ds)\leq\underset{\Ds\to 0}{\lim \inf}\calJs_\Ds(\balpha_\Ds,\theta_\Ds).
\end{align*}
\item \textit{Existence of a recovery sequence}: If $(\balpha,\theta)\in\calU_{ad}\times \calV_{ad}$,
is given, then  $(\Pi_{\Ds}\balpha,\Pi_{\Ds}\theta)$ belongs to $\calU_{ad}^\Ds\times\calV_{ad}^\Ds$ where
$\Pi_{\Ds}$ is the piecewise linear Lagrange interpolation operator.
Since
$\partial_s(\Pi_{\kappa}\balpha) \rightarrow \partial_s\balpha$ in $\rL^2(0,s_F)$,
$\partial_s(\Pi_{\kappa}\theta) \rightarrow \partial_s\theta$ in $\rL^2(0,s_F)$,
 we proceed as in Step 2 to obtain the convergence of $\calJs^1_\Ds$ and
$\calJs^2_\Ds$, whence
\begin{align*}
 \underset{\Ds\to 0}{\lim\sup}\calJs_\Ds(\Pi_{\Ds}\balpha,\Pi_{\Ds}\theta) &\leq \underset{\Ds\to 0}{\lim\sup}\calJs^1_\Ds({\Pi}_{\kappa}\balpha,{\Pi}_{\Ds}\theta)
 + \underset{\Ds\to 0}{\lim\sup}\calJs^2_\Ds(\Pi_{\Ds}\theta)\\
& \quad+ \underset{\Ds\to 0}{\lim\sup}\calJs^3_\Ds(\Pi_{\Ds}\balpha) + \underset{\Ds\to 0}{\lim\sup}\calJs^4_\Ds(\Pi_{\Ds}\theta)
\leq\calJs(\balpha,\theta).
\end{align*}
\end{enumerate}
We finally argue as in steps  4 and 5 of Theorem~\ref{thm:Gconv} to conclude that
 $(\bbalpha,\btheta)$ is a global  minimizer of \eqref{eq:PJ2}, i.e.
 \[
 \calJs(\balpha,\theta) \geq \calJs(\bbalpha,\btheta)\quad \forall (\balpha,\theta) \in \calU_{ad}\times\calV_{ad},
 \]
$\lim_{\Ds\to 0}\calJs_\Ds(\bbalpha_\Ds,\btheta_\Ds)=\calJs(\bbalpha,\btheta)$ and
$\left\{\bbalpha_{\Ds},\btheta_\Ds\right\}_{\Ds>0}$
converges $\rL^2$-strongly and $\rH^1$-weakly to $(\bbalpha,\btheta)$. We skip details for brevity.
\end{proof}


\section{Numerical examples}\label{s:numerics}

Let us illustrate the performance of the proposed discrete
schemes  \eqref{eq:control_problem_t_disc} and \eqref{eq:control_problem_s_disc}.
For the first scheme we consider the approximation of a constant and  time dependent
vector field $\bvel$ (space independent)  on $D_t$.
For the second scheme  we compute the optimal time and force by solving $\eqref{eq:control_problem_s_disc}$.
Finally, we use the computed optimal force as  an input to an advection-diffusion
partial differential equation used in magnetic drug targeting (see \cite{GR2005}).
For all examples $\Omega\subset \R^2$ is a ball of unit radius centered at $(0,0)$ and, the dipoles positions and directions are
$\bx_{k+1}=1.2(\cos(k\pi/4),\sin(k\pi/4))$ and $\widehat{\bv{d}}_{k+1}=(\cos(k\pi/4),\sin(k\pi/4)),
k=0,\ldots,7$ ($\nd=8$), respectively (see Figure~\ref{fig:example1_domains} (left)).

To solve the minimization problem, we use a \textsc{Matlab} implementation of the projected BFGS with Armijo  line search \cite{K1999}; alternative strategies such as semi-smooth Newton \cite{HIK2002} can be immediately applied as well. The minimization algorithm is terminated when the $l^2$-norm of the projected gradient
is less or equal to $10^{-5}$.  Besides the initial condition $\balpha_0\in \R^{\nd}$, the minimization algorithm requires an
initial guess for the solution $\bbalpha$.
We propose below Algorithm \ref{alg:algorithm} in order to compute an initial guess  $\balphaI\in \calH^\tau_{ad}$.
Notice that the speed of convergence of the minimization
algorithm is sensitive to the choice of  $\balphaI$.

Given $\balpha_*$ and $\balpha^*$, by $\mbox{Proj}_{[\balpha_*,\balpha^*]}$
 we denote pointwise projection on the interval $[\balpha_*,\balpha^*]$
\[
\mbox{Proj}_{[\balpha_*,\balpha^*]}(\mathbf{x})
=\min\left\{\balpha^*,\max\left\{\mathbf{x},\balpha_*\right\}\right\},
\]
where $\min$ and $\max$ are interpreted componentwise.
The aim of Algorithm~\ref{alg:algorithm} is to  initialize the
actual minimization algorithm used to solve \eqref{eq:control_problem_t_disc}.
Notice that the algorithm above is the so-called finite horizon model predictive or instantaneous optimization 
algorithm \cite{AHNSWsub,HK1999}: the minimization takes place over one time step only.
Under the assumption that solving a one step problem is cheaper, the previous algorithm
provides us a  ``fast" yet ``accurate" initial guess to solve \eqref{eq:control_problem_t_disc}.
\begin{remark}[non-convexity] \rm
  Due to the non-convexity of the cost functional $\calJ$ (and $\calJ_\tau$), we may converge
  to different local minima  depending on the choice
  of the initial guess.
\end{remark}
\begin{algorithm}[H]
\caption{: Initialization algorithm}
\begin{algorithmic}[1]

\STATE $\textbf{Input:} \, \balpha_0,\,\balpha_*,\, \balpha^*,\, \lambda,\,\tau,\, \hD,\, \verb"tol",\, \hbP^n_i,\, \hbvel^n_{i},\, n=1,\ldots, N,\, $
\STATE Set $\mathbf{x}^0:=\balpha_0$
\FOR{$n=1,\ldots,N$}
    \STATE Solve for $\mathbf{x}\in \R^{\nd}$ \\
    $\qquad\displaystyle\underset{\balpha_*\leq \mathbf{x}\leq \balpha^*}{\min_{\mathbf{x}\in\R^{\nd}}}J(\mathbf{x}), \qquad J(\mathbf{x})=
\dfrac{1}{2}\sum_{i=1}^\dd\|\mathbf{x}^\top
\hbP^n_i\mathbf{x}-\hbvel^n_{i}\|^2_{\rL^2(\hD)}
+\dfrac{\lambda}{2\Dt^2}|\mathbf{x}-\mathbf{x}^0|^2$\\
 with termination criterion: $|\mathbf{x}-\mbox{Proj}_{[\balpha_*,\balpha^*]}(\mathbf{x}-\nabla J(\mathbf{x}))| < \verb"tol"$.
\STATE $\balphaI(n\tau)=\mathbf{x}$
\STATE $\mathbf{x}^0 \gets \mathbf{x}$
\ENDFOR
\end{algorithmic}
\label{alg:algorithm}
\end{algorithm}

\subsection{Problem 1: Approximation of a vector field}
\label{sec:example1}%
We consider the discrete minimization problem \eqref{eq:control_problem_t_disc} for two
different choices of $\bvel$ and $D_t$. The admissible set is characterized by upper and lower bounds
$\balpha^*=(2,\ldots,2)\in\R^8$ and $\balpha_*=(-2,\ldots,-2)\in\R^8$, respectively.
The final time is $T=1$
and the reference domain $\hD$ is a ball of radius $0.2$ centered at $(-0.75,0)$.
  We are interested in the approximations of a constant vector field
  $\bvel_1(\bx,t) = (1, 0)^\top$ and a time dependent vector field given
  by $\bvel_2(\bx,t)=(\sin(\pi(1-t)), -\cos(\pi (1-t))^\top$.
  The moving domains $D_{i,t}$ are such that $D_{i,t}=\bx_I(t,\hD)$, with
$\bx_I(t,\hbx)=\bvarphi_i(t)+\hbx$, $i=1,2$.

Here $\bvarphi_1(t)=(t, 0)^\top$
  and  $\bvarphi_2(t)=0.6(\cos(\pi(1-t)), \sin(\pi (1-t))^\top$ represent
  trajectories of the barycenters of $D_{1,t}$ and $D_{2,t}$, respectively as
  shown in  Figure~\ref{fig:example1_domains} (center and right).
\begin{figure}
 \centering
\includegraphics[width=0.3\linewidth]{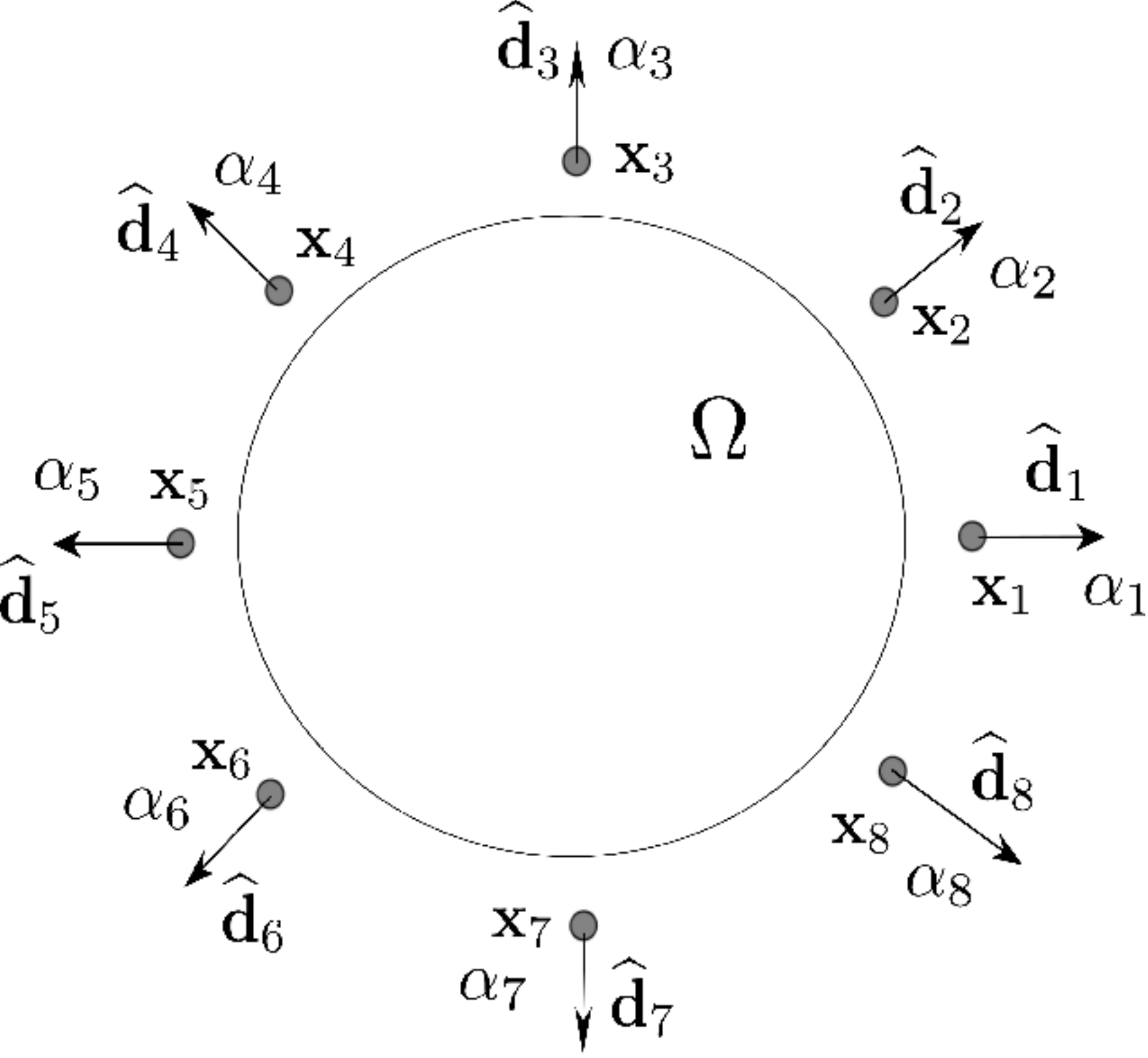}\qquad
\raisebox{0.3\height}{\includegraphics[width=0.18\linewidth]{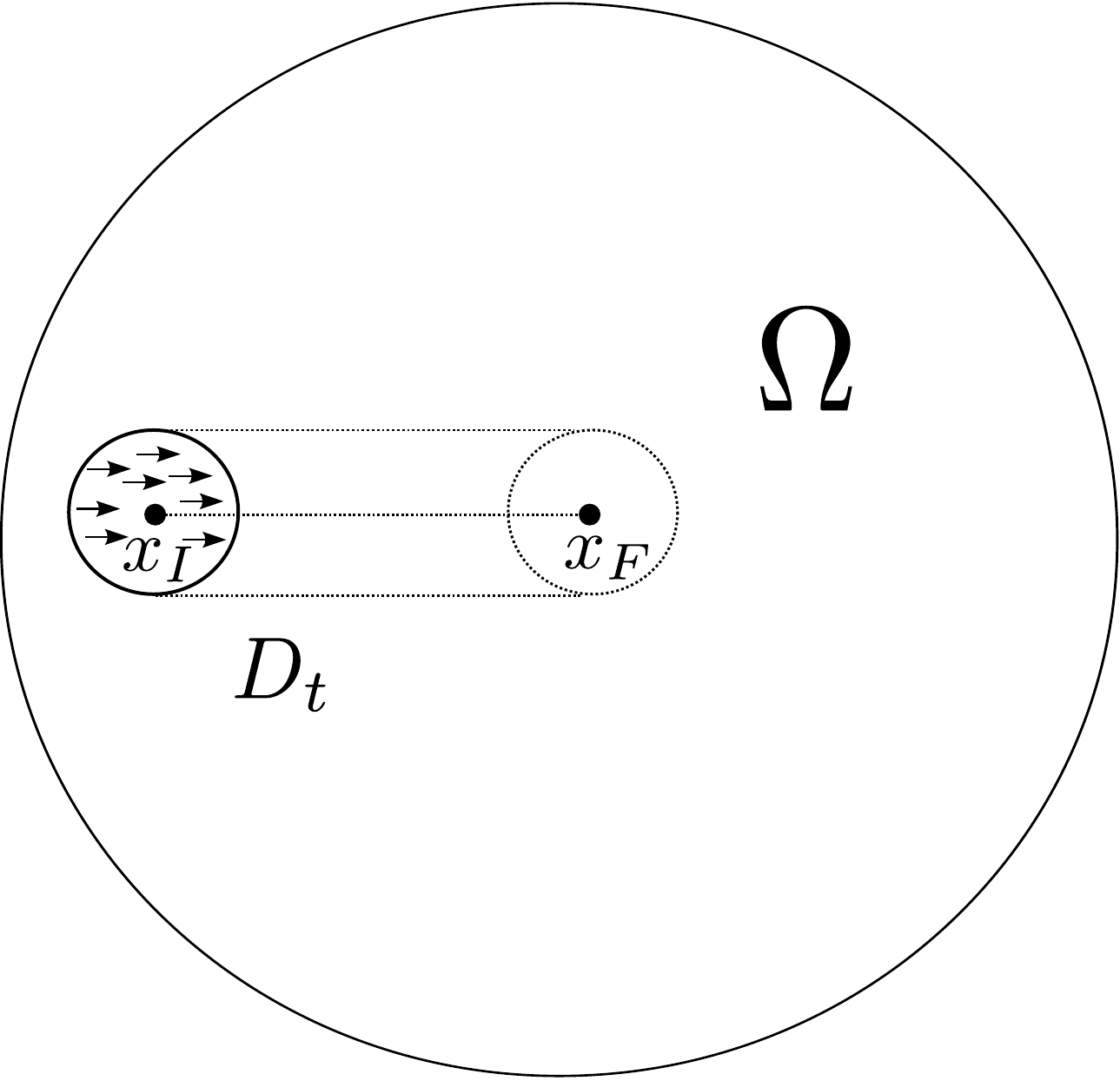}}\qquad
\raisebox{0.3\height}{\includegraphics[width=0.18\linewidth]{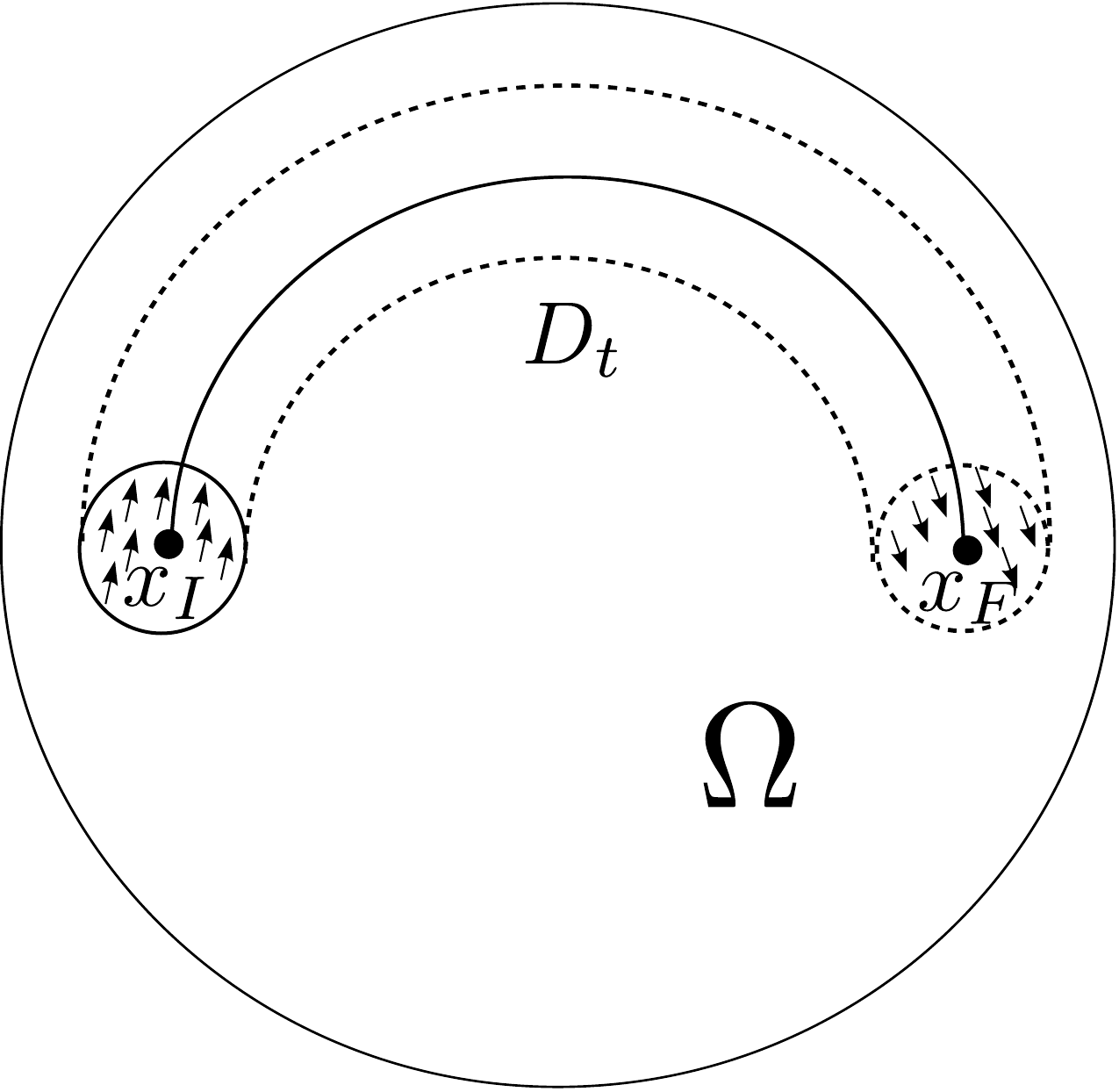}}
\caption{Computational configuration (left) and two different moving subdomains $D_t$ within
$\O\subset\R^2$.  These domains, $D_{1,t}$ (left) and $D_{2,t}$ (right), do not deform, are initialized at $\bx_I$
and travel to their destinations $\bx_F$ along different curves.
The target vector fields $\left\{\bvel_i\right\}_{i=1}^2$, which are tangent to
the curve $\calC$, are  represented by arrows
for the initial configuration (left) and also for the final configuration (right).
}
\label{fig:example1_domains}
\end{figure}
  For each of these configurations we have solved problem \eqref{eq:control_problem_t_disc}
 for   $N=80$  time intervals, $\lambda = 10^{-5}$ and  initial condition $\balpha_{0}= (1,\ldots,1)^\top\in \R^{\nd}$.

 To analyze the discrete optimization problem we consider
 the approximation of $\bvel_1$ on $D_{1,t}$ for two initial guesses:
 constant function $\balphaI^a= {\bf 1}$ and $\balphaI^b$ given by Algorithm~\ref{alg:algorithm} with  $\verb"tol"=10^{-3}$.
 Figure~\ref{fig:example1_ini} (left) shows the initial guess $\balphaI^b$ obtained by Algorithm~\ref{alg:algorithm} with a total of
201 iterations. We recall that, at each time step $n$,  the iterations of the minimization problem in Algorithm~\ref{alg:algorithm}
depends on $\nd$ unknowns.
The solution $\bbalpha(t)=(\bar{\alpha}_i(t))_{i=1}^8$ of problem \eqref{eq:control_problem_t_disc} with initial guesses  given
by  $\balphaI^b$ and $\balphaI^a$ are depicted in Figure~\ref{fig:example1_ini} (right) and
Figure~\ref{fig:example1_one} (left), respectively. We notice that dipoles on the left (dipoles 4, 5 and 6)
have small intensities at initial times. Such a behavior is expected because $D_{1,t}$ is close
 to the boundary of $\Omega$, where the magnetic field generated by these dipoles is large, thus  it is
 difficult for dipoles 4, 5 and 6 to ``push" in the  $\bvel_1$ direction. On the other hand, dipoles on the right
 (dipoles 1, 2 and 8) which can create an attractive field in the  $\bvel_1$ direction, have the
 largest intensities  at initial times.
 \begin{figure}
\centering
\includegraphics[width=0.46\linewidth]{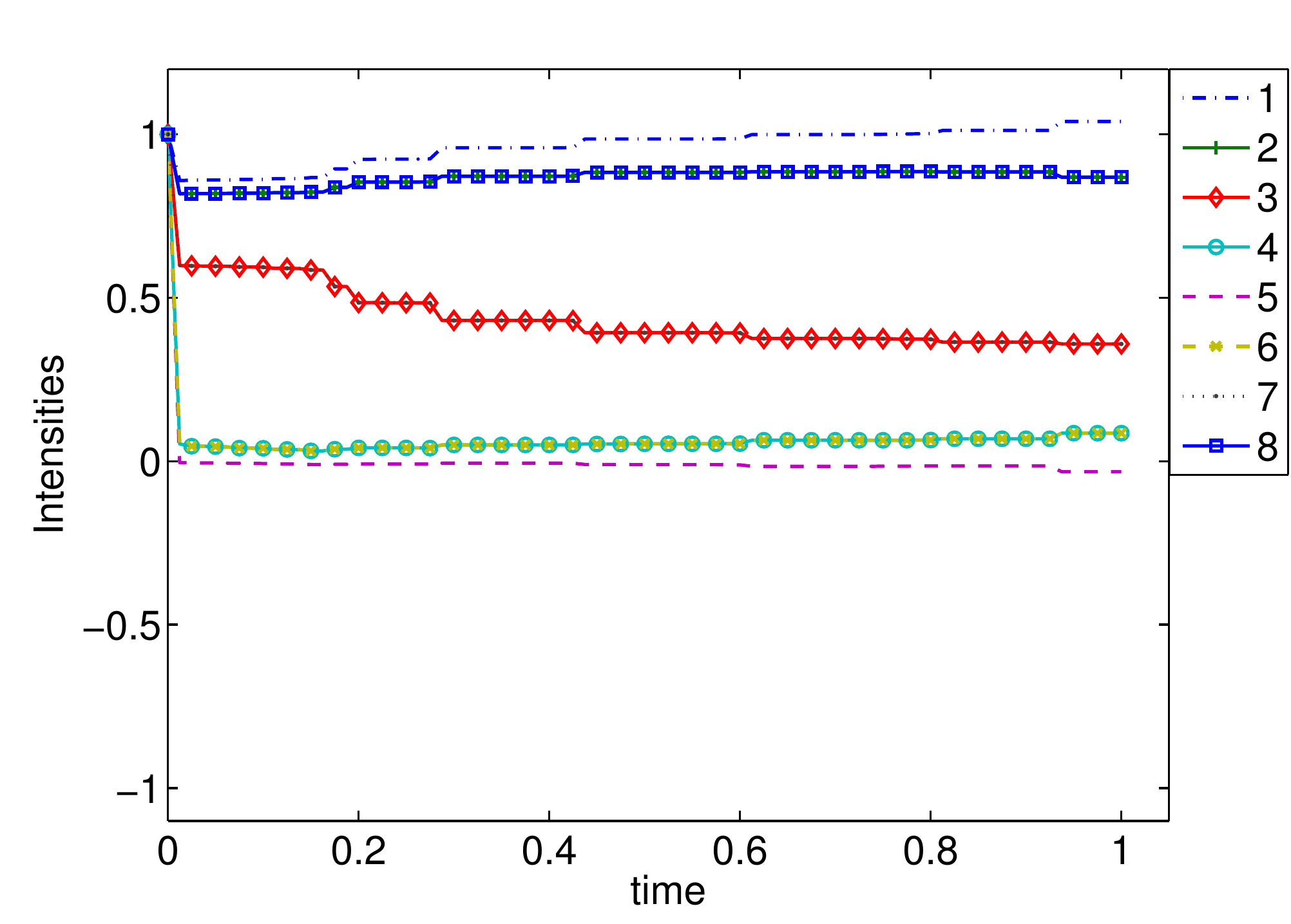}\quad
\includegraphics[width=0.46\linewidth]{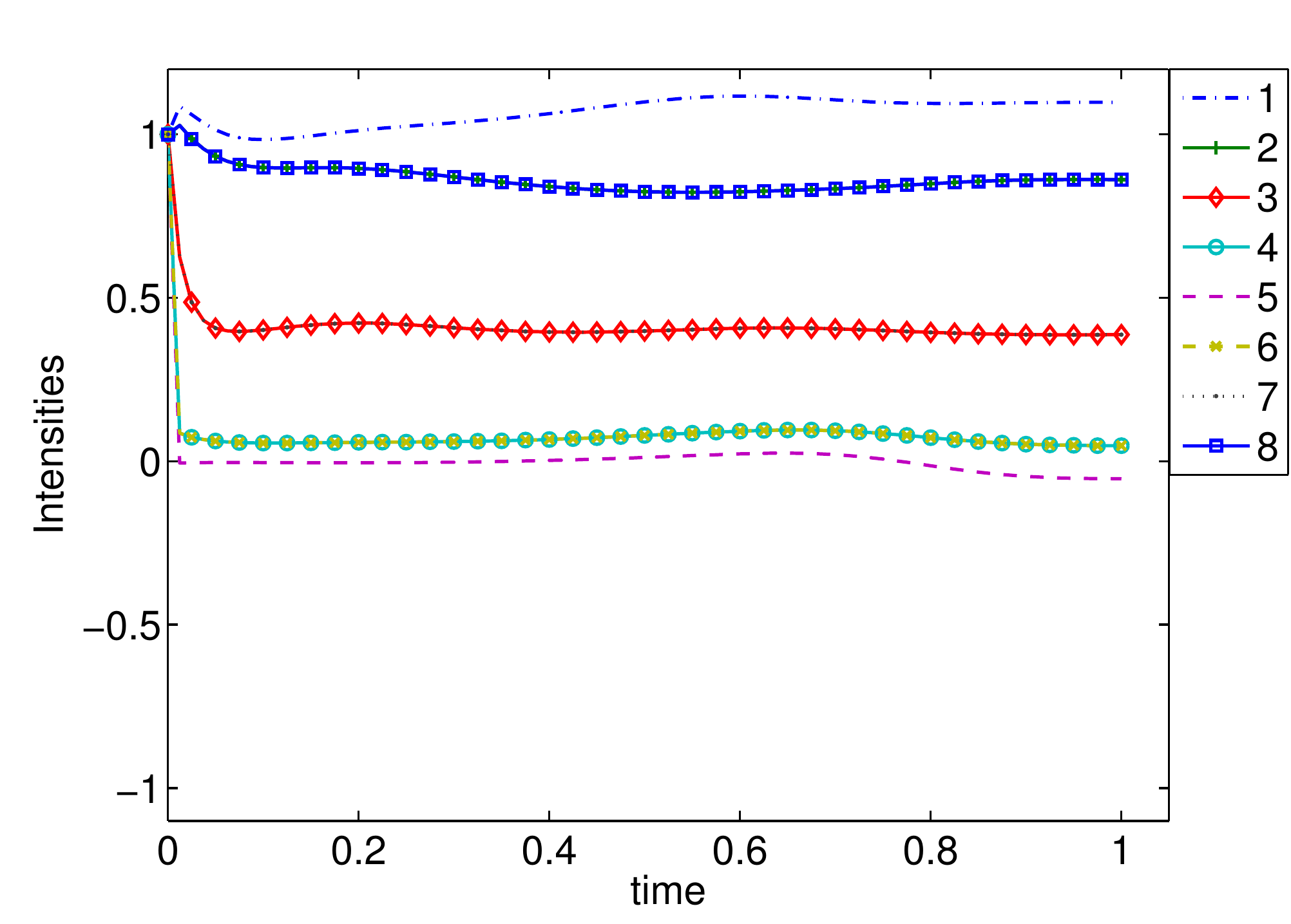}
\caption{Initial guess $\balphaI^b$ computed by Algorithm~\ref{alg:algorithm} (left) and
optimal solution $\bbalpha_\tau=(\bar{\alpha}_{i,\tau})_{i=1}^8$ to problem \eqref{eq:control_problem_t_disc}
with initial guess $\balphaI^b$ (right).
The evolution of the intensities is shown for each dipole $i=1,\ldots, 8$.
}
\label{fig:example1_ini}
\end{figure}

Figure~\ref{fig:example1_one} (right) shows  $\calJ_\tau$ per optimization  iteration (log scale).
Notice that the number of unknowns at each iteration to solve \eqref{eq:control_problem_t_disc} is $N\nd$ .
The minimization algorithm to solve \eqref{eq:control_problem_t_disc} with initial guess $\balphaI^b$ stops after 3440
iterations, whereas for the initial guess $\balphaI^a$  the stopping criteria is
satisfied after 38427 iterations: the convergence of the discrete problem
\eqref{eq:control_problem_t_disc} with initial guess $\balphaI^b$ is faster than the one computed
with $\balphaI^a$. Moreover, the initial configurations lead us to two different local minimizers:
 the limit solution thus depends on the initial data.
 \begin{figure}
\centering
\includegraphics[width=0.46\linewidth]{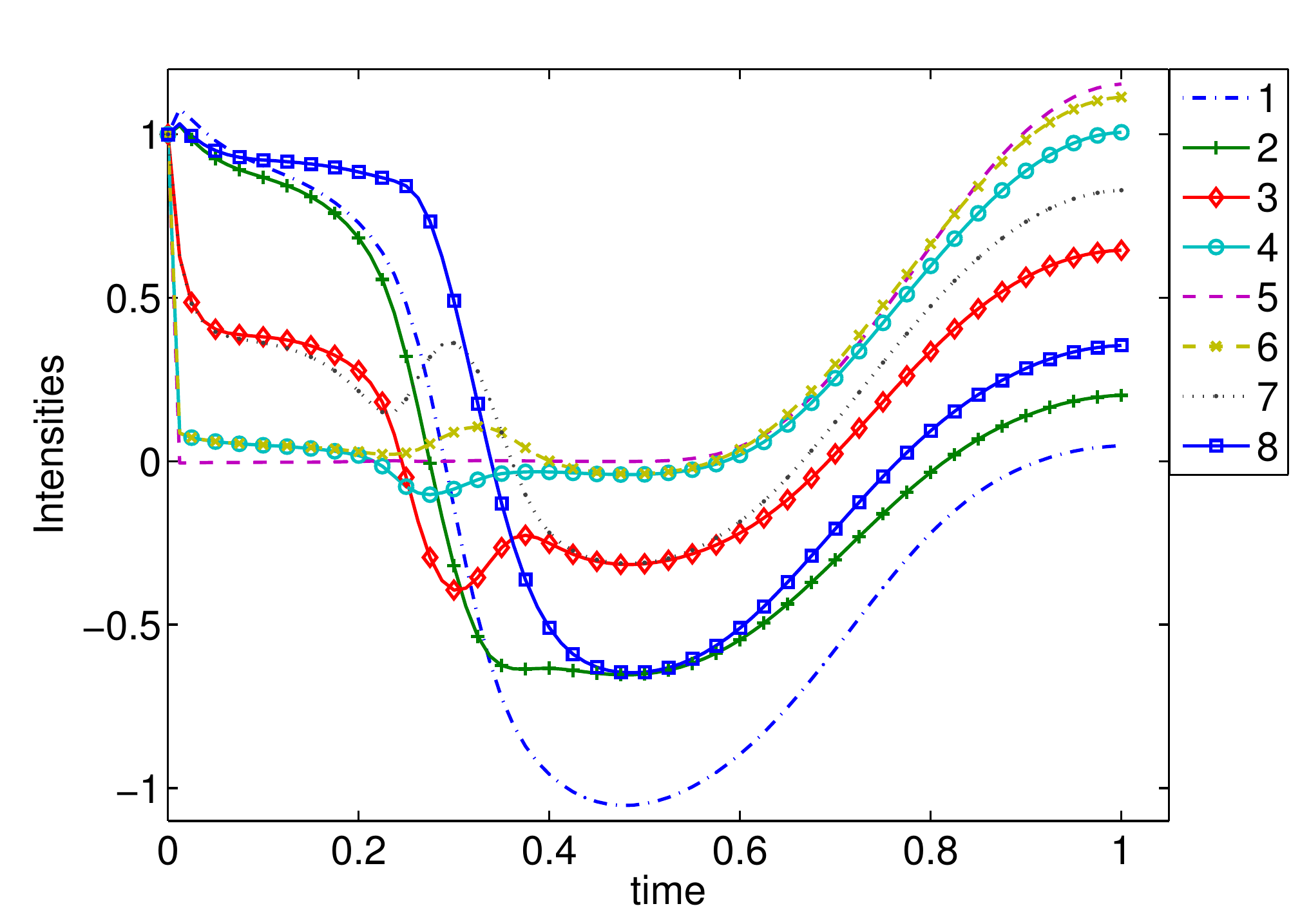}\quad
\includegraphics[width=0.46\linewidth]{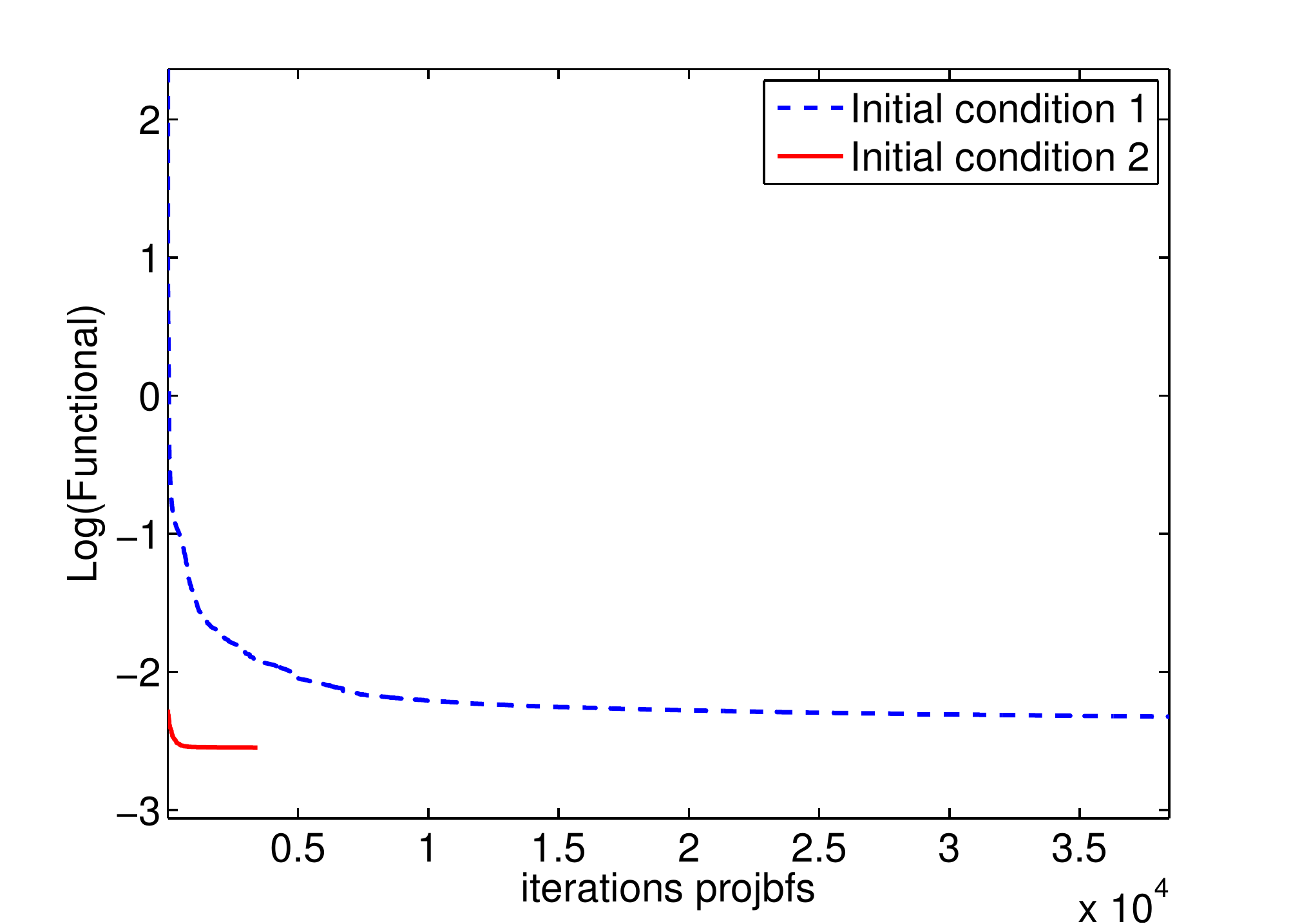}
\caption{Optimal solution  $\bbalpha_\tau=(\bar{\alpha}_{i,\tau})_{i=1}^8$ (left) to problem \eqref{eq:control_problem_t_disc}
 computed with initial guess $\balphaI^a = {\bf 1}$ and $\log(\calJ_\tau)$ computed at each optimization iteration with initial guess
$\balphaI^a$ (dashed line) and $\balphaI^b$ (solid line) (right). The evolution of the intensities is
shown for each dipole $i= 1,\ldots, 8$.
 }
\label{fig:example1_one}
\end{figure}
Figure~\ref{fig:app_ex_ini} shows the approximate field for three time instances
$t= 0.0125, 0.5, 1$ computed with initial guesses $\balphaI^a$ (top) and $\balphaI^b$ (bottom).
Here, the magnitude of magnetic force $|\nabla|\bH|^2|$ and the magnetic force vectors restricted to $D_{1,t}$ are depicted.
From Figures \ref{fig:example1_ini} (right) and \ref{fig:example1_one} (left) we notice a similar behavior
for the intensities at initial times, thus in Figure~\ref{fig:app_ex_ini} we observe an analogous magnetic force
for both solutions at $t= 0.0125$.
In both cases, the magnetic force is close to $\bvel_1$ in $D_{1,t}$ as expected,
whence about constant, whereas it is quite far from constant in the entire domain. %
\begin{figure}
\centering
\includegraphics[width=0.325\linewidth]{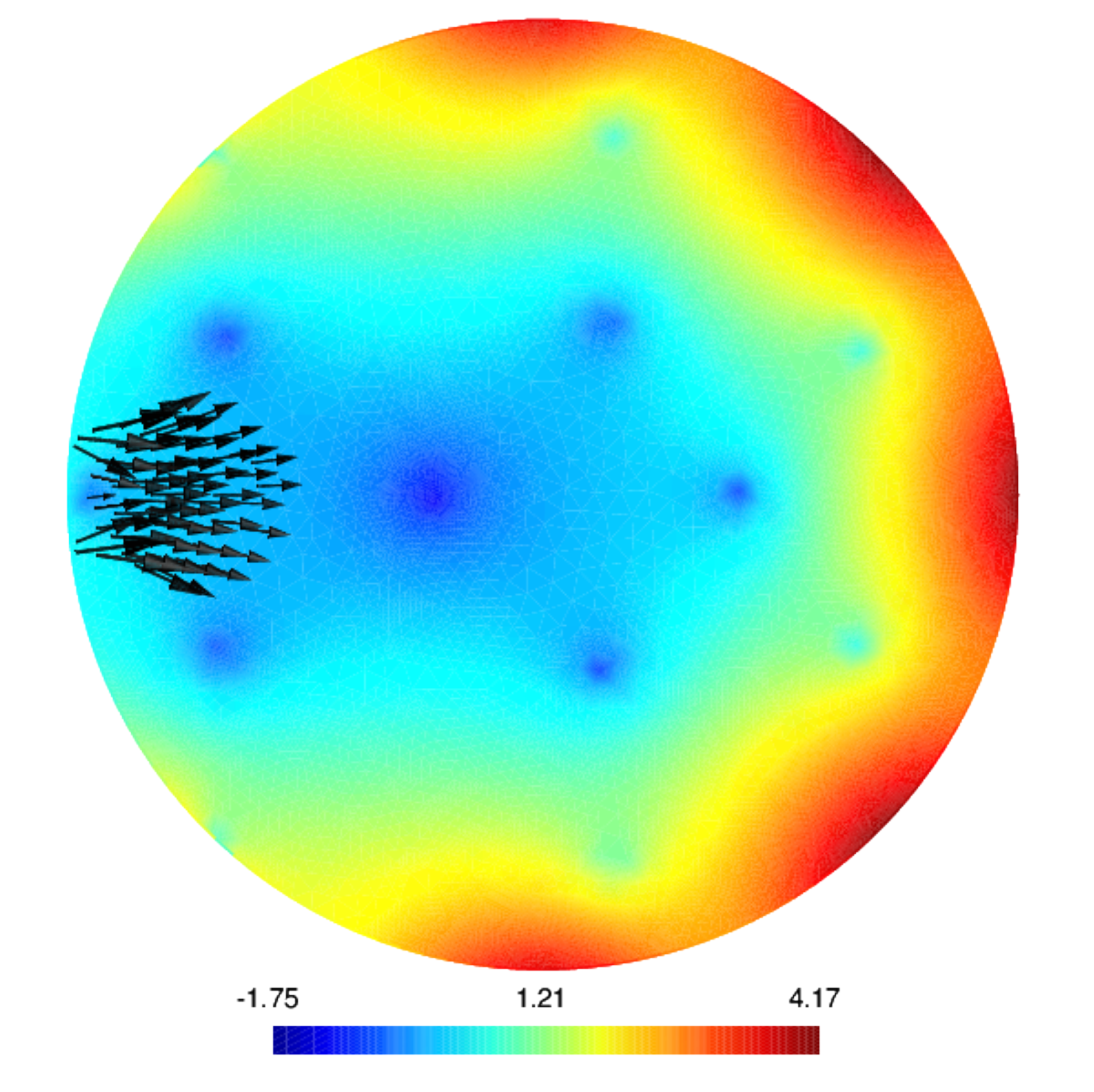}
\includegraphics[width=0.325\linewidth]{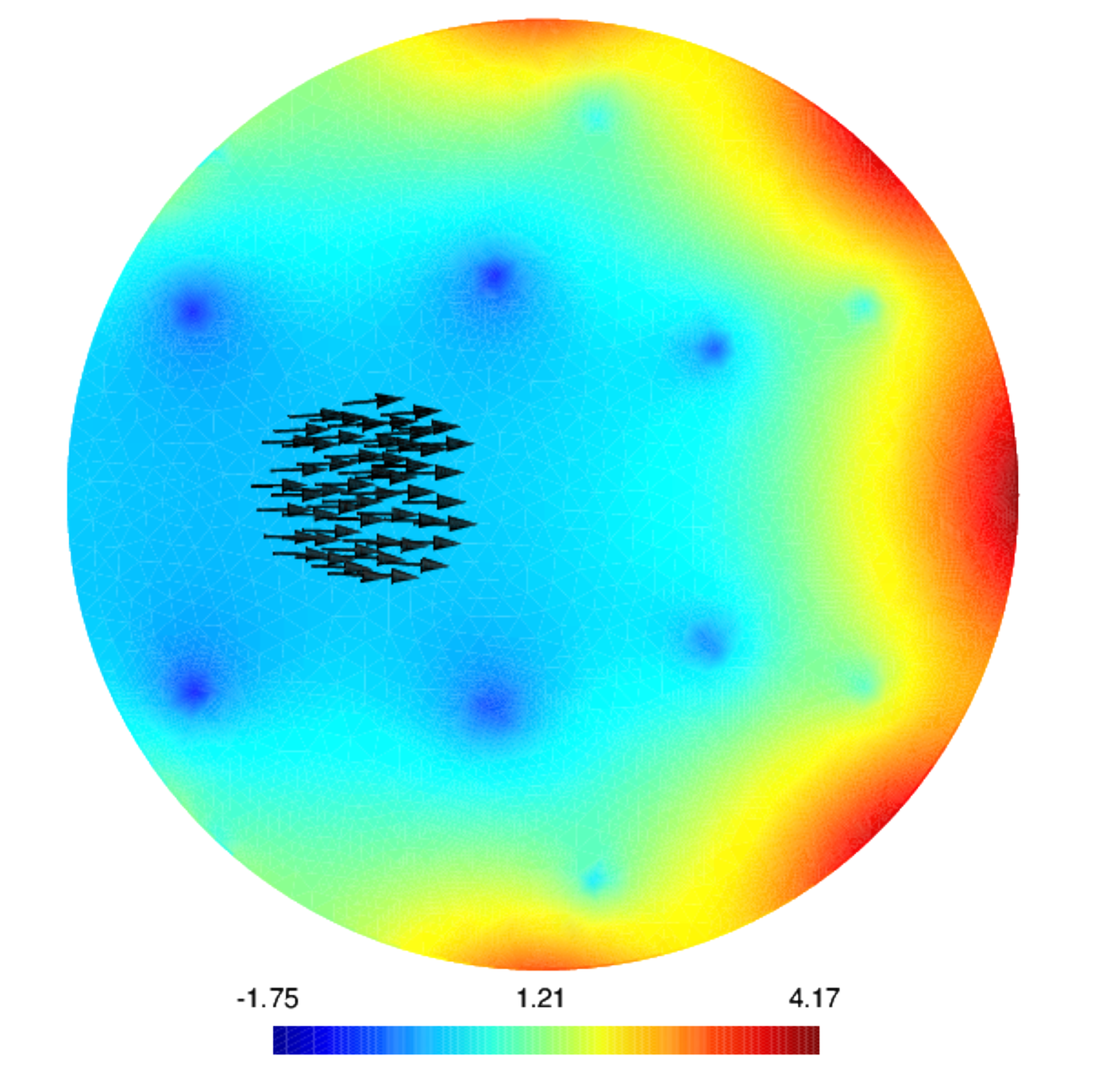}
\includegraphics[width=0.325\linewidth]{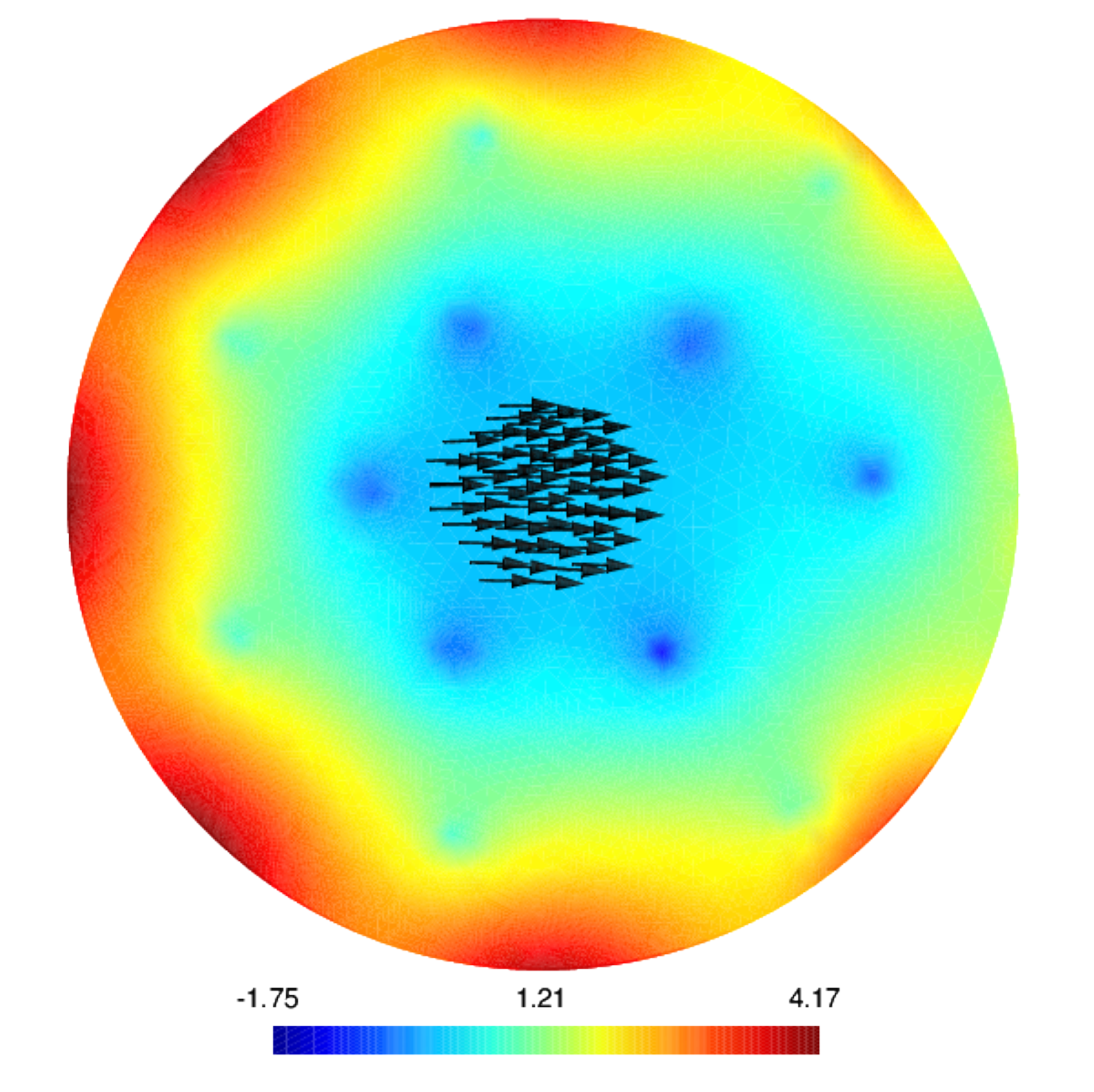}\\
\includegraphics[width=0.325\linewidth]{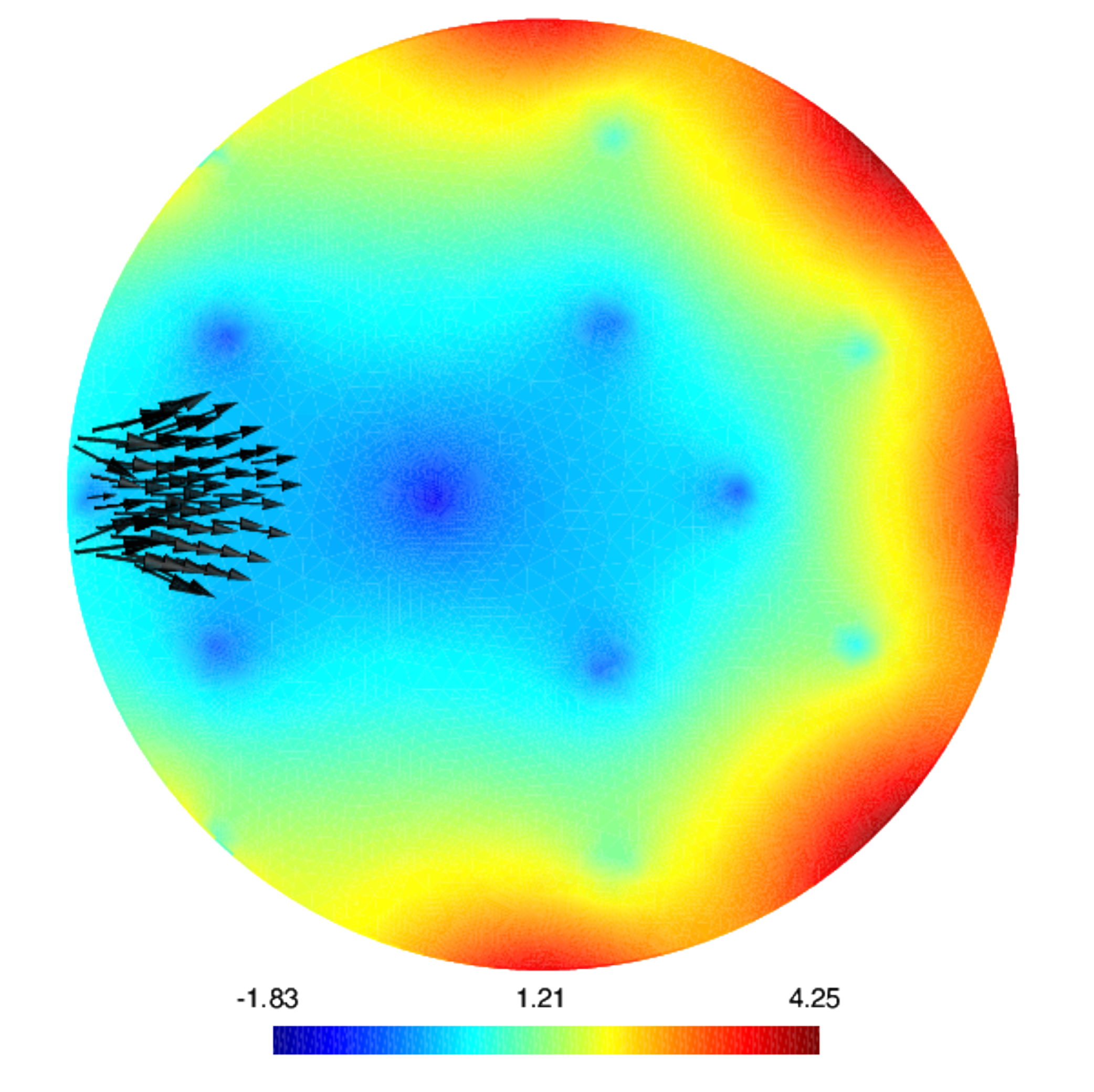}
\includegraphics[width=0.325\linewidth]{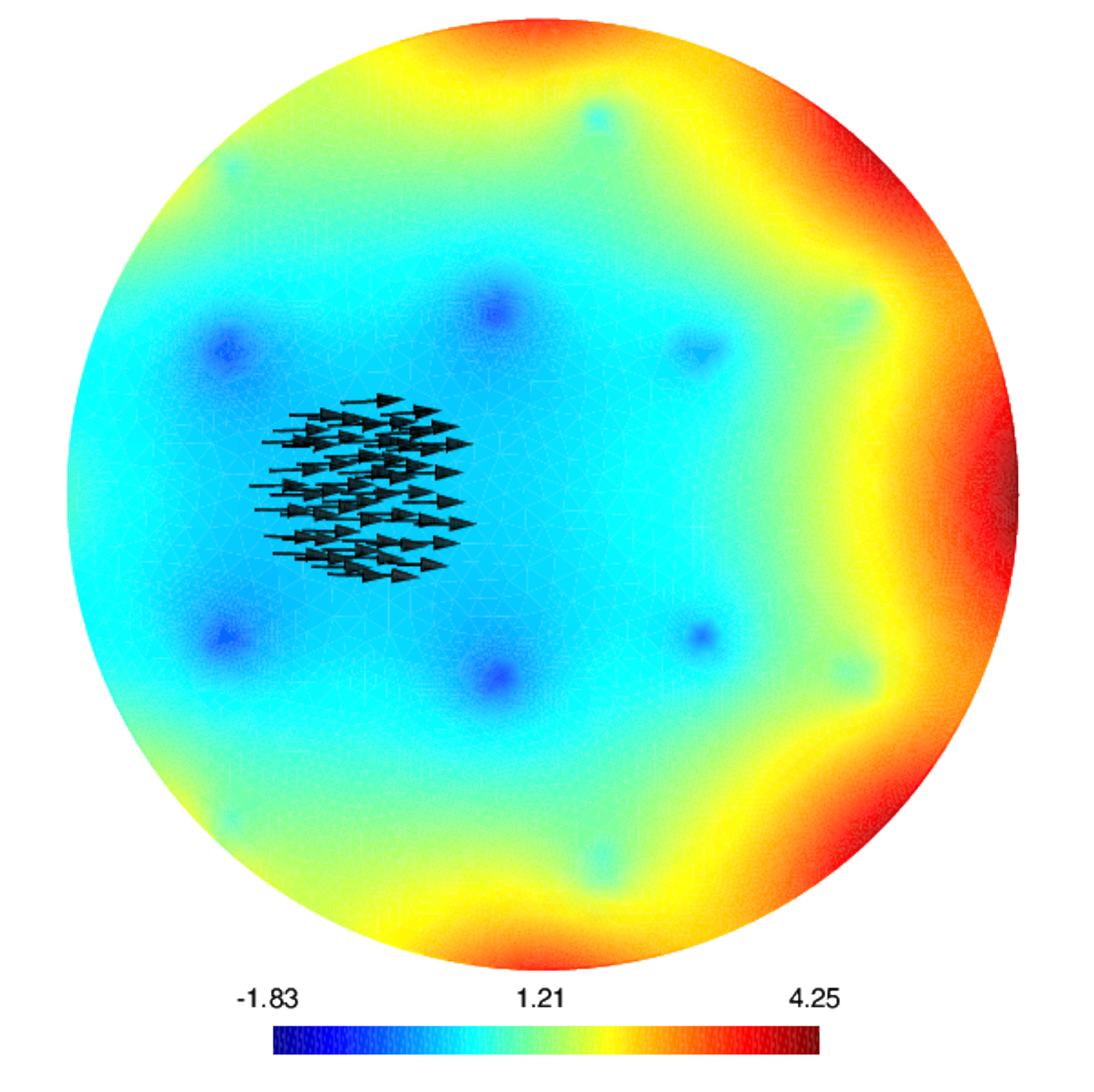}
\includegraphics[width=0.325\linewidth]{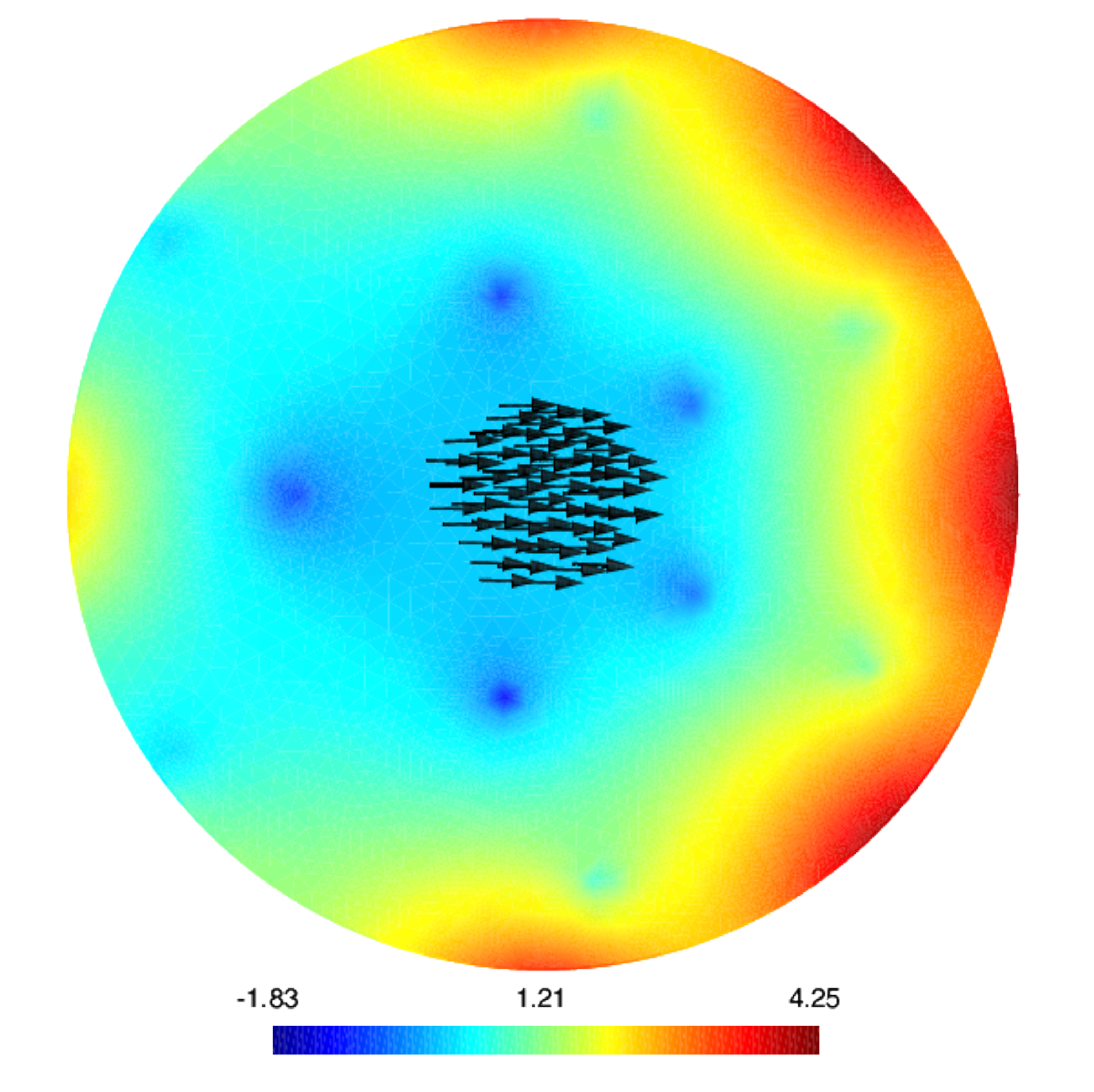}
\caption{Magnetic force solution to the minimization problem with initial guess
$\balphaI^a$ (top) and  $\balphaI^b$ (bottom).
Figures show the force  at three different times
$t=0.0125, 0.5$ and $1$ from left to right, respectively.
For illustrative purposes the magnetic forces are depicted only in $D_{1,t}$
for the same time instances
(directions shown by black arrows).
The magnetic force magnitude  $|\nabla|\bH|^2|$
is shown by the background coloring on a logarithmic scale.}
\label{fig:app_ex_ini}
\end{figure}
 \begin{figure}
\centering
\includegraphics[width=0.46\linewidth]{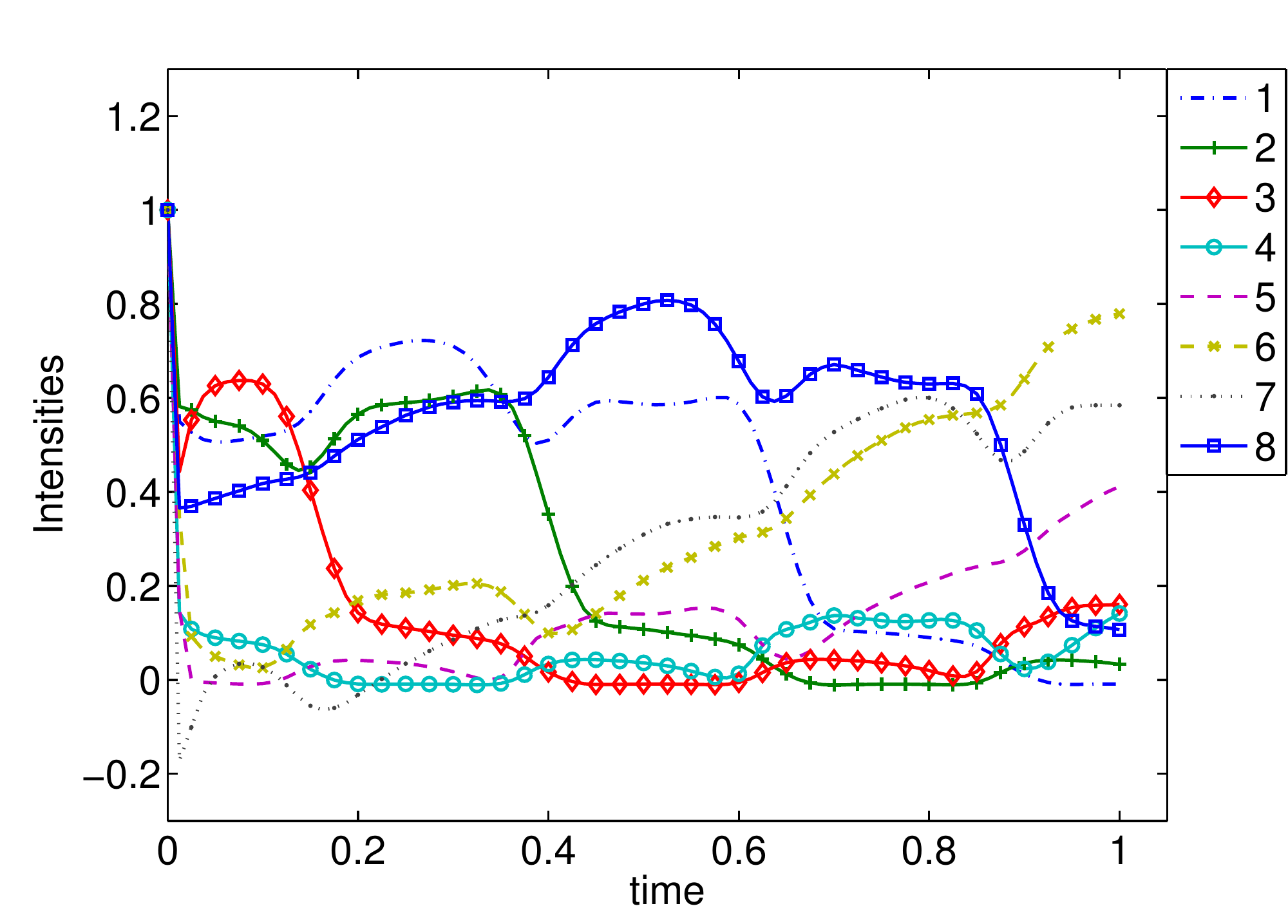}\quad
\includegraphics[width=0.46\linewidth]{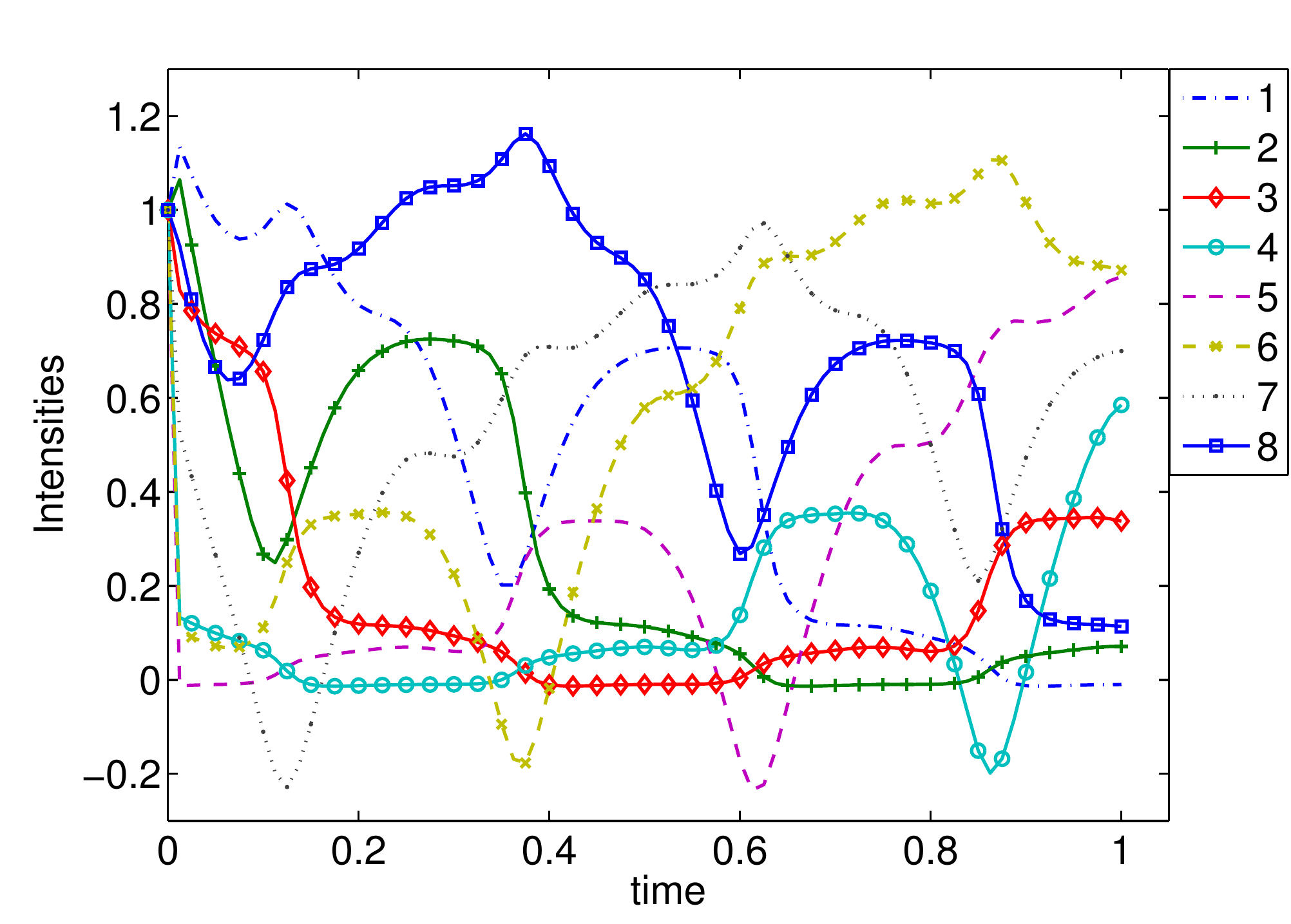}
\caption{Initial guess $\balphaI^b$ computed by Algorithm~\ref{alg:algorithm} (left) and
optimal solution $\bbalpha_\tau=(\bar{\alpha}_{i,\tau})_{i=1}^8$ to problem \eqref{eq:control_problem_t_disc}
with initial guess $\balphaI^b$ (right).
The evolution of the intensities is shown for each dipole $i=1,\ldots, 8$.
}
\label{fig:example2_ini}
\end{figure}

\begin{figure}
\centering
\includegraphics[width=0.325\linewidth]{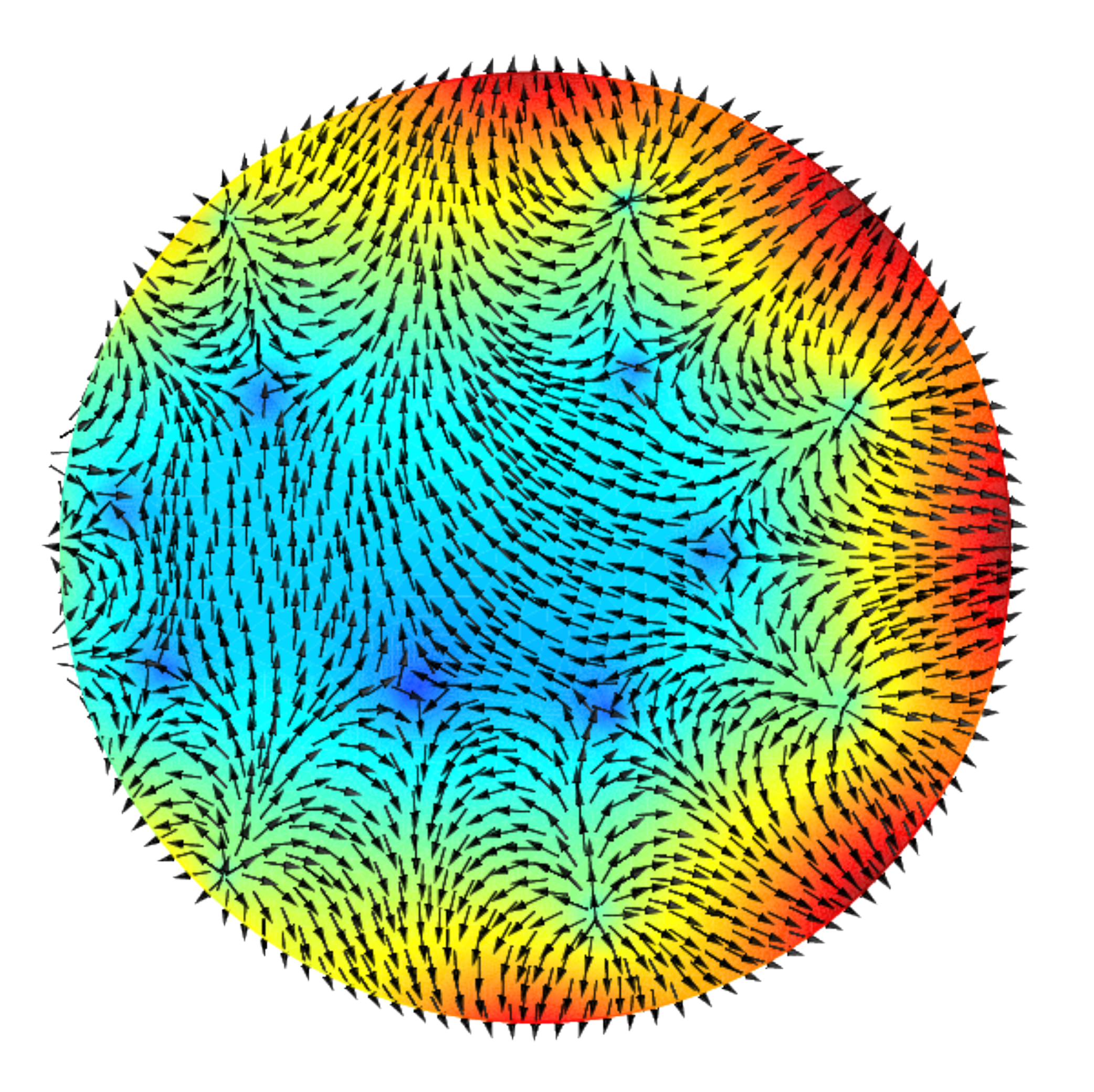}
\includegraphics[width=0.325\linewidth]{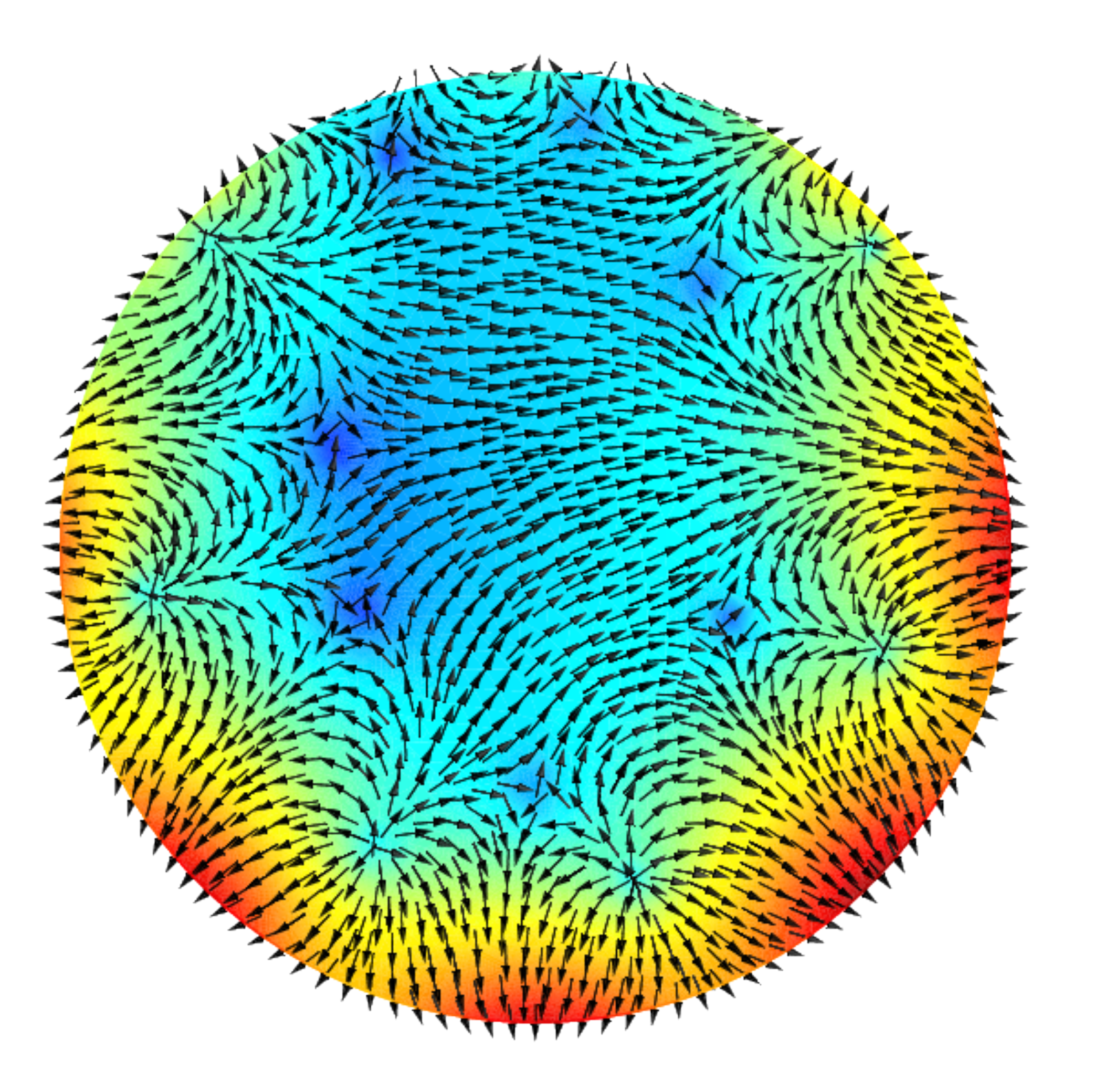}
\includegraphics[width=0.325\linewidth]{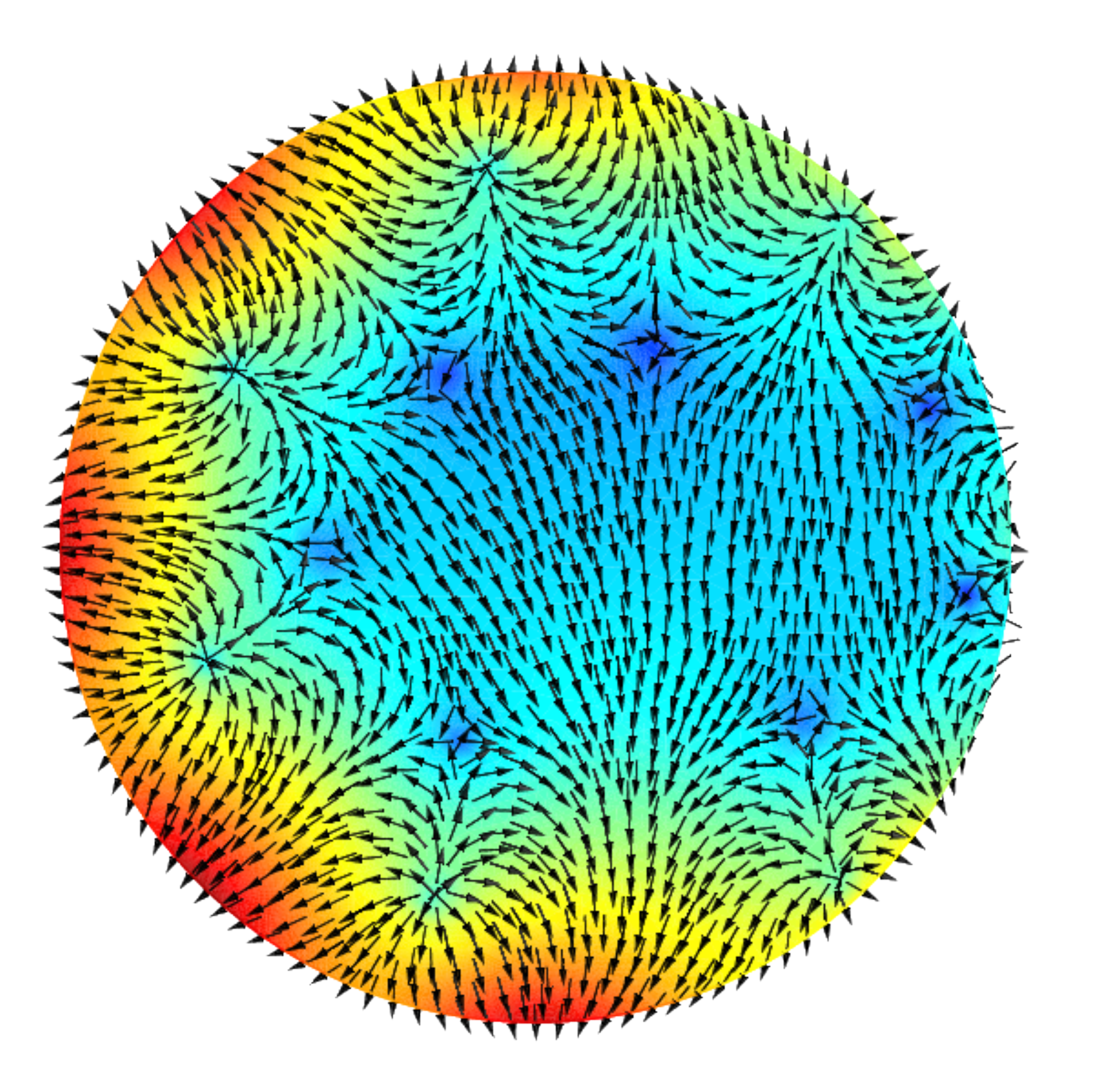}\\
\includegraphics[width=0.325\linewidth]{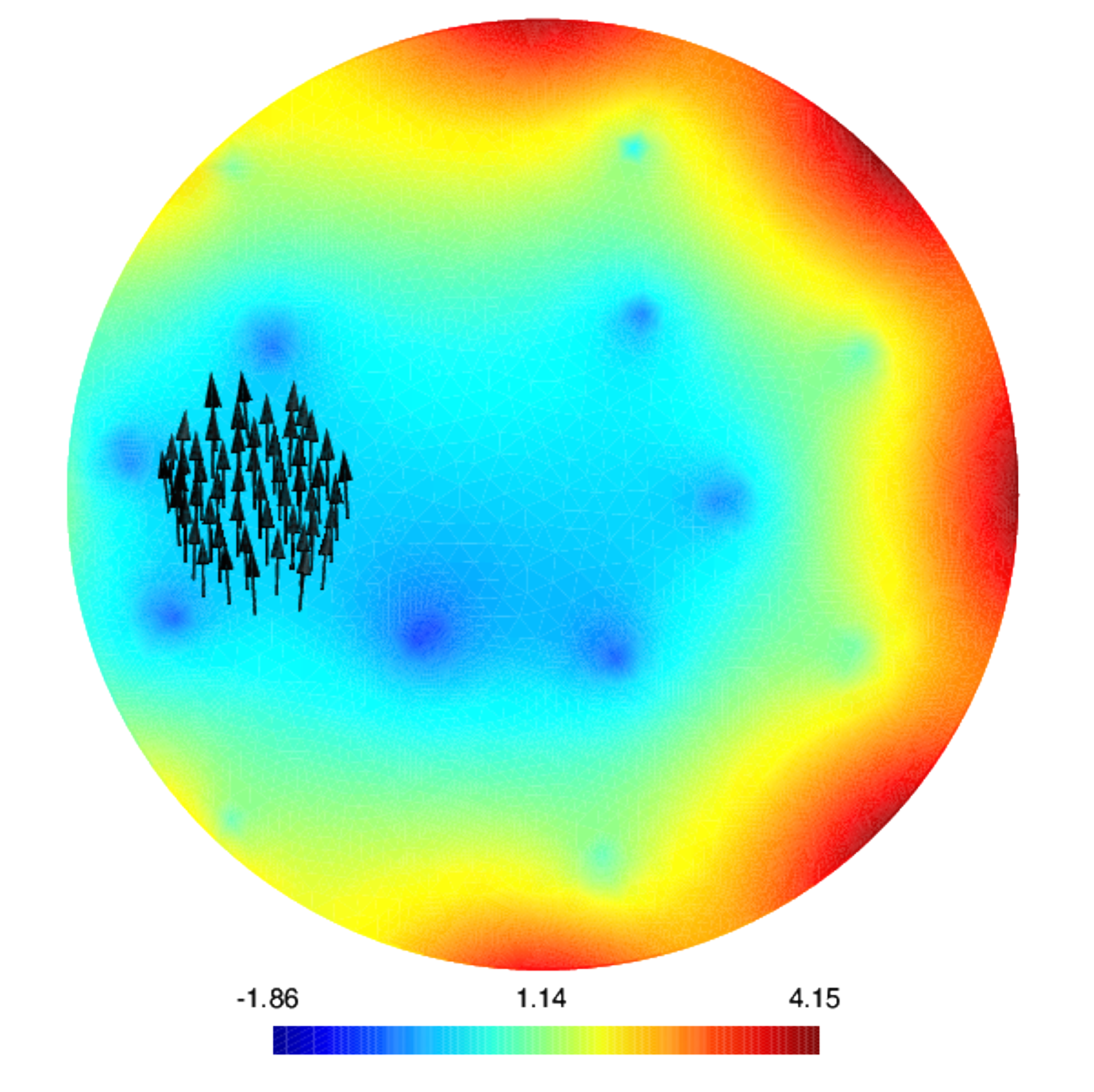}
\includegraphics[width=0.325\linewidth]{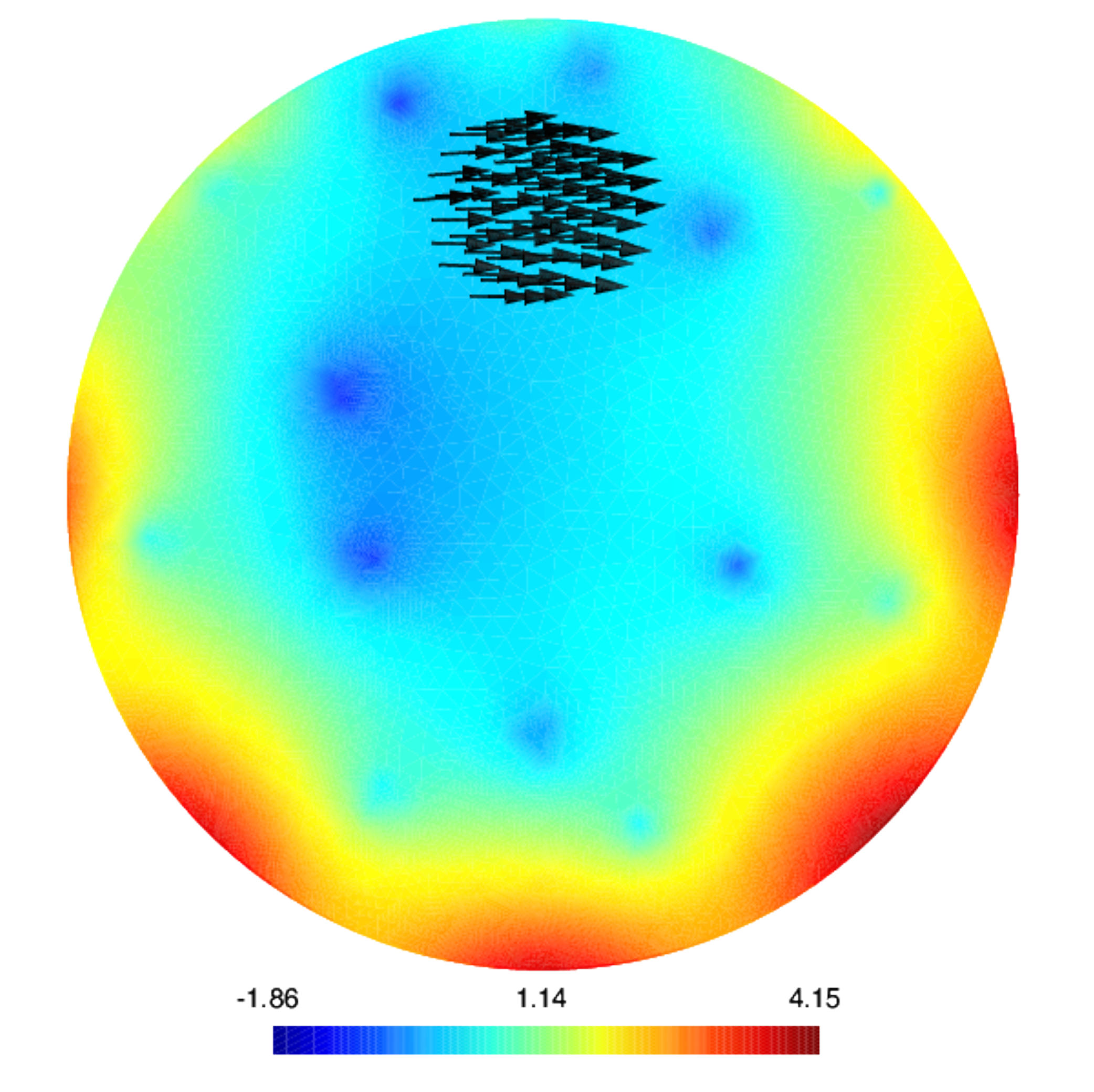}
\includegraphics[width=0.325\linewidth]{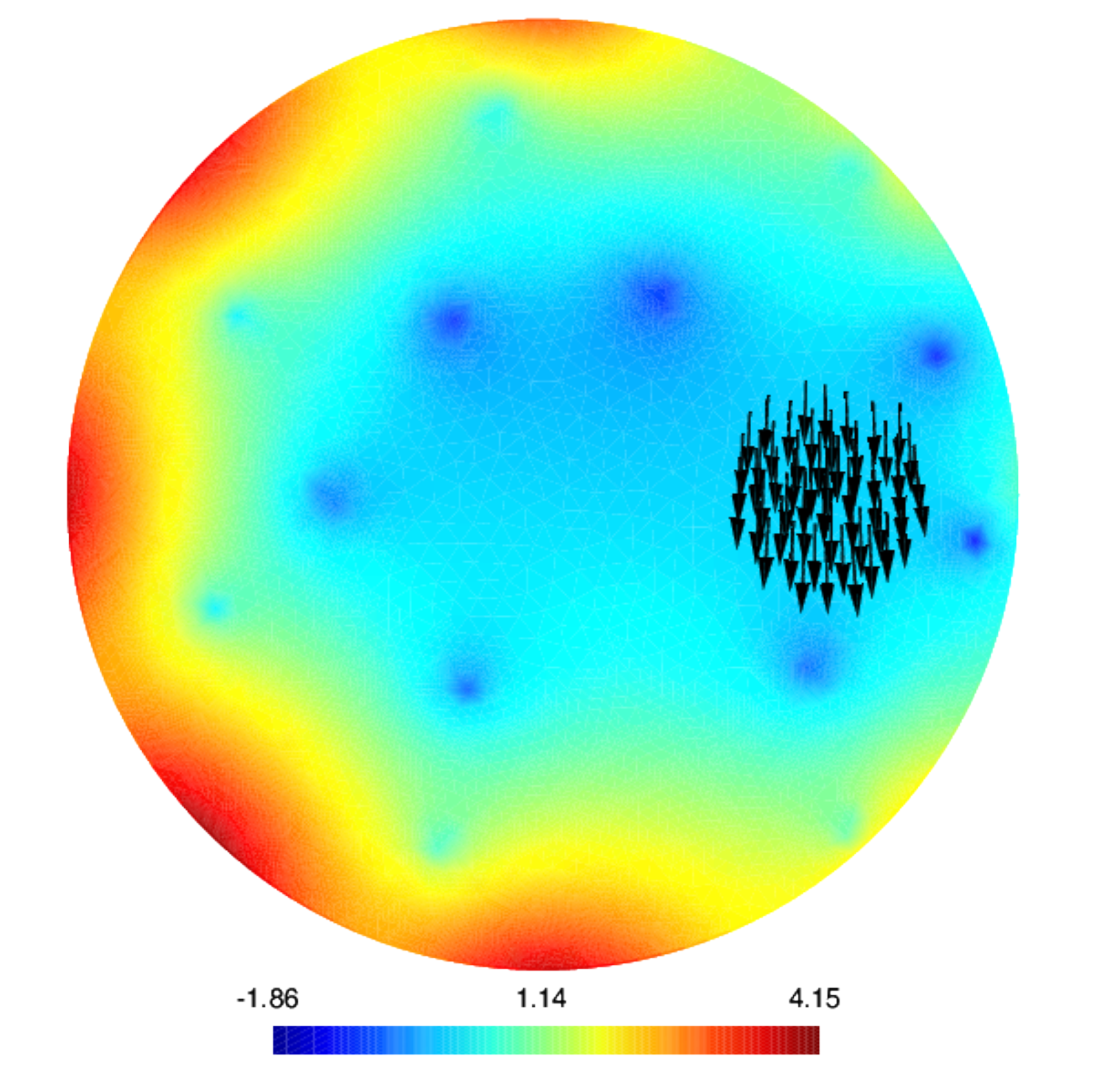}
\caption{Magnetic force solution to the minimization problem with
$D_{2,t}$ and vector field $\bvel_2$ (directions shown by black arrows).
Top figure shows the force  at three different times
$t=0.0125, 0.5$ and $1$ from left to right, respectively.
In the bottom figure  the magnetic forces is defined only on $D_{2,t}$ for the same time instances.
The magnetic force magnitude  $|\nabla|\bH|^2|$ is shown by the background coloring on a logarithmic scale.}
\label{fig:app_ex3}
\end{figure}

Next, we approximate the vector field $\bvel_2$ defined on $D_{2,t}$.
Figure \ref{fig:example2_ini}  shows the initial guess  computed by
Algorithm~\ref{alg:algorithm} and the vector intensities $(\bar{\alpha}_{i,\tau})_{i=1}^8$
solutions to the minimization problem \eqref{eq:control_problem_t_disc}.
The magnetic force  is shown  for three time instances
$t=0.0125, 0.5, 1$ in Figures~\ref{fig:app_ex3}. Here,  the magnitude of magnetic force
(in the background) and the magnetic force directions represented by arrows are depicted. The top figures
 illustrate the normalized magnetic force in $\Omega$, but in the bottom figures the field is restricted to $D_t$.
It is well-known that the magnetic forces on the boundary of the domain $\Omega$ are much higher
than inside $\Omega$. Thus making it difficult  to approximate the magnetic force when $D_t$
is close to the boundary. In fact, as we notice in the previous example, such high forces may lead to
inferior approximation of the vector field $\bvel$. The optimization problem overcomes this situation and
we observe that the optimal intensity $\bbalpha_i$ due to $i^{th}$ dipole, is smaller when $D_t$ is in
close proximity to the $i^{th}$ dipole and $\bvel$ is pointing in a  direction opposite to the dipole position.
This can be seen in Figure~\ref{fig:example1_ini} (right) and Figure~\ref{fig:example1_one} (left), where the
intensities of dipole 5 is small when $t$ is close to 0. We notice a similar behavior (small intensities)
in Figure~\ref{fig:example2_ini} (right) for dipoles
5, 3 and 1, for $t$ close to $0$, $0.5$ and $1$, respectively.
From the previous figures it can be seen that our proposed minimization procedure produces a vector field that is close to the target field.

\subsection{Problem 2: Optimal final time}
\label{sec:example2}
We consider problem~\eqref{eq:control_problem_s_disc} with the curve $\calC$  parameterized by
\[
\brho(s)=\bx_I+s\dfrac{(\bx_F-\bx_I)}{\norm{\bx_F-\bx_I}{}},\qquad s\in[0,0.75]
\]
where the starting  and end points are given by $\bx_I=(0,-0.75)$ and  $\bx_F=(0,0)$, respectively.
For the arc length we consider a uniform discretization with $M=80$
space intervals, namely, the space stepping is $\Ds=0.75/M$.
The upper and lower bounds  characterizing the admissible set $\calU^\Ds_{ad}\times\calV^\Ds_{ad}$ are
given by  $(\balpha_*,\balpha^*,\theta_*,\theta^*)=(-1,1,10^{-10},10)$.
As in the previous problem, we consider the following algorithm in order to obtain an initial guess
for problem \eqref{eq:control_problem_s_disc}:

\begin{algorithm}[!h]
\caption{: Initialization algorithm}
\begin{algorithmic}[1]

\STATE $\textbf{Input:} \, \balpha_0,\, \theta_0,\,\balpha_*,\,\theta_*\balpha^*,\,\theta^*,\, \lambda,\,\Ds,\, \eta,\, \beta,\,
\hD,\, \verb"tol",\, \brho', \, \wbP^n_i,\, m=1,\ldots, M,\, i=1,\ldots,\dd $
\STATE Set $(\mathbf{x}^0,\textrm{y}^0):=(\balpha_0,\theta_0)$
\FOR{$m=1,\ldots,M$}
    \STATE Solve for $(\mathbf{x},\textrm{y})\in \R^{\nd+1}$
    \[
    \displaystyle \underset{(\balpha_*,\theta_*)\leq(\mathbf{x},\textrm{y})\leq(\balpha^*,\theta^*)}{\min_{(\mathbf{x},\textrm{y})\in\R^{\nd+1}}}
    F(\mathbf{x},\textrm{y})
    \]
    \[
        F(\mathbf{x},\textrm{y})=
\dfrac{1}{2\textrm{y}}\sum_{i=1}^\dd\|\mathbf{x}^\top\wbP^n_i\mathbf{x}-\brho'(m\Ds)\textrm{y}\|^2_{\rL^2(\hD)}
+\dfrac{\beta}{\textrm{y}}+\dfrac{\lambda}{2\Ds^2}|\mathbf{x}-\mathbf{x}^{0}|^2
+\dfrac{\eta}{2\Ds^2}|\textrm{y}-\textrm{y}^{0}|^2
\]
 with termination criterion: $|(\mathbf{x}, \textrm{y})-\mbox{Proj}_{[\balpha_*,\balpha^*,\theta_*,\theta^*]}
 ((\mathbf{x}, \textrm{y})-\nabla F(\mathbf{x},\textrm{y}))| < \verb"tol"$.
\STATE $\balphaI(n\Ds)=\mathbf{x}$,\, $\theta_I(n\Ds)=\textrm{y}$
\STATE $\mathbf{x}^0 \gets \mathbf{x}$,\, $\textrm{y}^0 \gets \textrm{y}$
\ENDFOR
\end{algorithmic}
\label{alg:algorithm_2}
\end{algorithm}

We solve problem \eqref{eq:control_problem_s_disc}
 for an initial condition  $(\balpha_{0},\theta_{0})=(10^{-6},\ldots,10^{-6})\in \R^{\nd+1}$,
 $\beta= 10^{-1}$ and two set of cost parameters:  $(\lambda_1,\eta_1)=(10^{-6},10^{-4})$
 and $(\lambda_2, \eta_2)=(10^{-4},10^{-6})$. The initial guess is computed by using
 Algorithm~\ref{alg:algorithm_2}.
Figure~\ref{fig:finalT}  shows the evolution of velocity and intensity in
term of arc length. The intensity plots (left and center) correspond to
$(\lambda_1, \eta_1)$ and$( \lambda_2, \eta_2)$, respectively.
On the other hand, the dotted line in the right figure shows the velocity when
$(\lambda_1 , \eta_1)$ and the solid line corresponds to $(\lambda_2 , \eta_2)$.
The computed values of velocity are $\theta_\Ds(t(s^m))$, $m=1,\ldots,M$.

Since $\theta(\cdot) = \dfrac{ds}{dt}(\cdot)$, for $t \in [0,T_F]$,
we compute the final time $T_F$ by solving
\[
s_F=\int_0^{T_F}\theta_\Ds(\tau) d\tau=\sum_{i=1}^M\int_{t^{i-1}}^{t^i}\left(\theta_\Ds^{i-1} +\left(\dfrac{\tau-t^{i-1}}{t^i-t^{i-1}}\right)(\theta_\Ds^i-\theta_\Ds^{i-1})\right)d\tau,
\]
where $t^i=t(s^i), i=0,\ldots,M$. Because of the dependence on $t^i$, this equation has to be solved recursively:
\begin{align}\label{eq:t_i}
s^1=\int_{0}^{t^1}\left(\theta_\Ds^{0} +\dfrac{\tau}{t^1}(\theta_\Ds^1-\theta_\Ds^{0})\right)d\tau&\longrightarrow t^1=\dfrac{2s^1}{\theta_\Ds^1+\theta_\Ds^0}\nonumber\\
s^2=s^1+\int_{t^{1}}^{t^2}\left(\theta_\Ds^{1} +\left(\dfrac{\tau-t^{1}}{t^2-t^{1}}\right)
(\theta_\Ds^2-\theta_\Ds^{1})\right)d\tau&\longrightarrow t^2=\dfrac{2(s^2-s^1)}{\theta_\Ds^2+\theta_\Ds^1}+t^1\nonumber\\
\vdots&\phantom{\longrightarrow} \vdots\nonumber\\
s^i=s^{i-1}+\int_{t^{i-1}}^{t^i}\left(\theta_\Ds^{i-1} +\left(\dfrac{\tau-t^{i-1}}{t^i-t^{i-1}}\right)(\theta_\Ds^i-\theta_\Ds^{i-1})\right)d\tau&\longrightarrow t^i=\dfrac{2(s^{i}-s^{i-1})}{\theta_\Ds^{i}+\theta_\Ds^{i-1}}+t^{i-1}
\end{align}
for $ i=1,\ldots,M$.
Therefore, the final time $T_F$ is given by
\[
T_F=t^M=2\sum_{i=1}^M\dfrac{s^{i}-s^{i-1}}{\theta_\Ds^{i}+\theta_\Ds^{i-1}}=2\Ds\sum_{i=1}^M\dfrac{1}{\theta_\Ds^{i}+\theta_\Ds^{i-1}}.
\]
\begin{figure}
\centering
\includegraphics[width=0.33\linewidth]{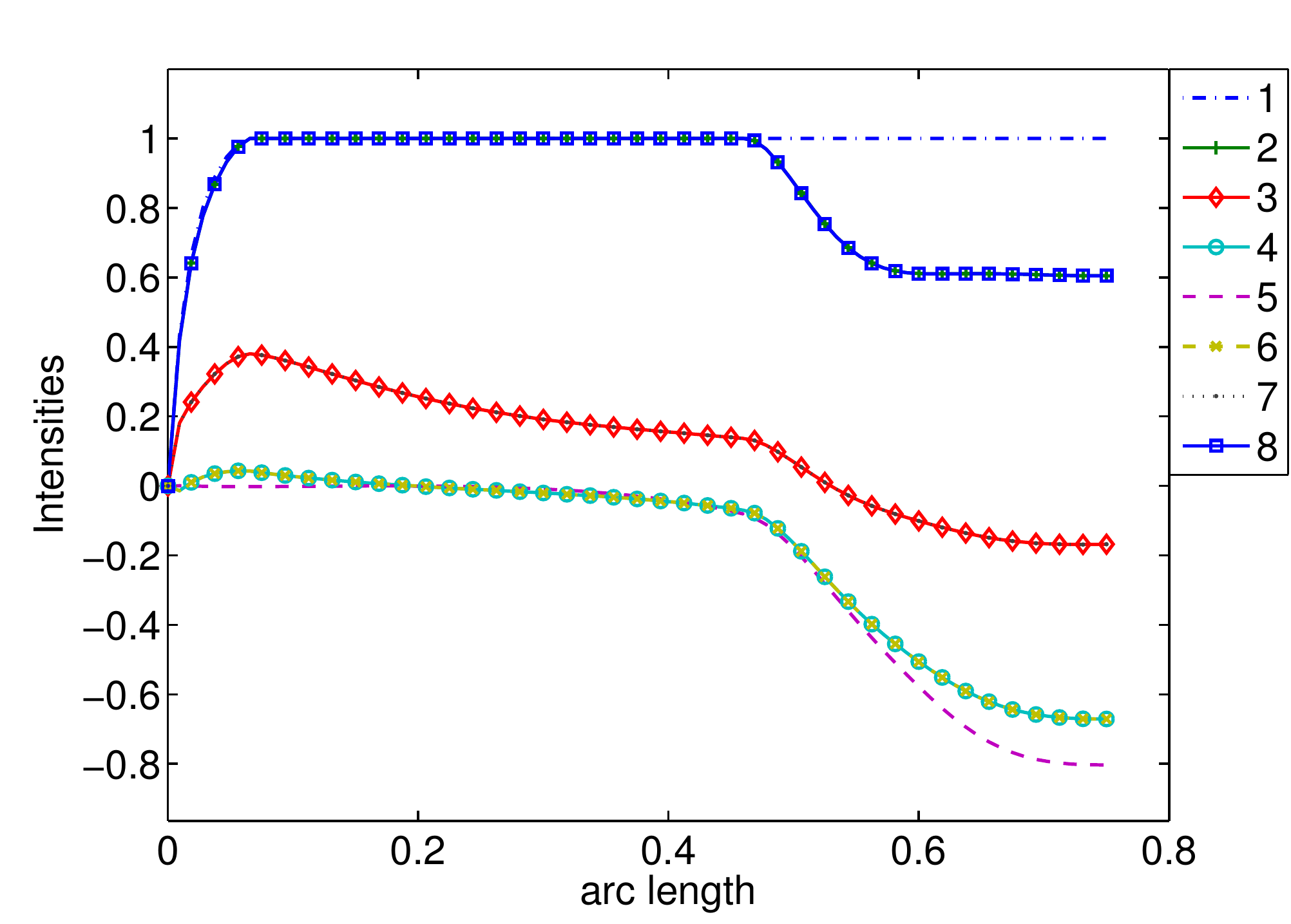}\!
\includegraphics[width=0.33\linewidth]{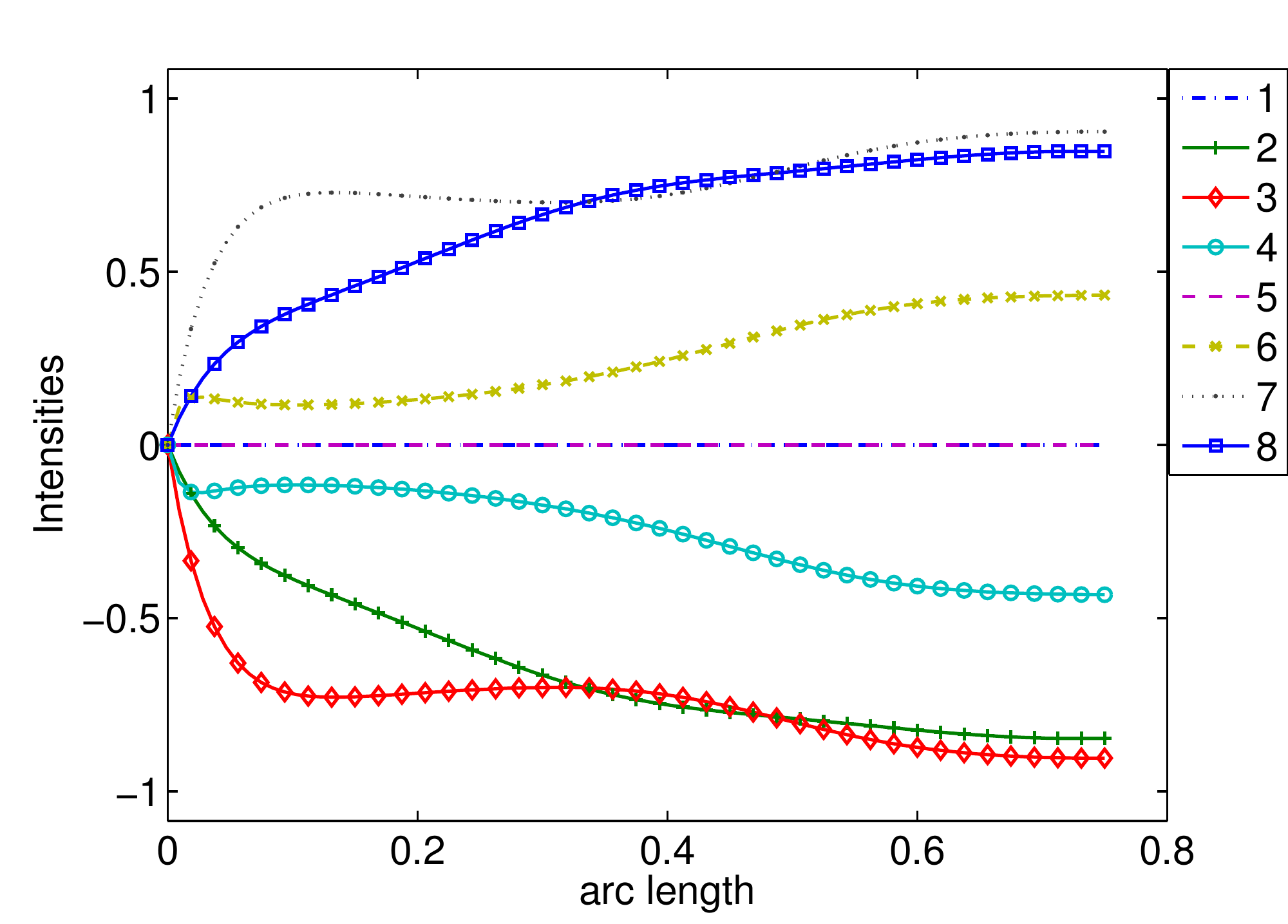}\!
\includegraphics[width=0.33\linewidth]{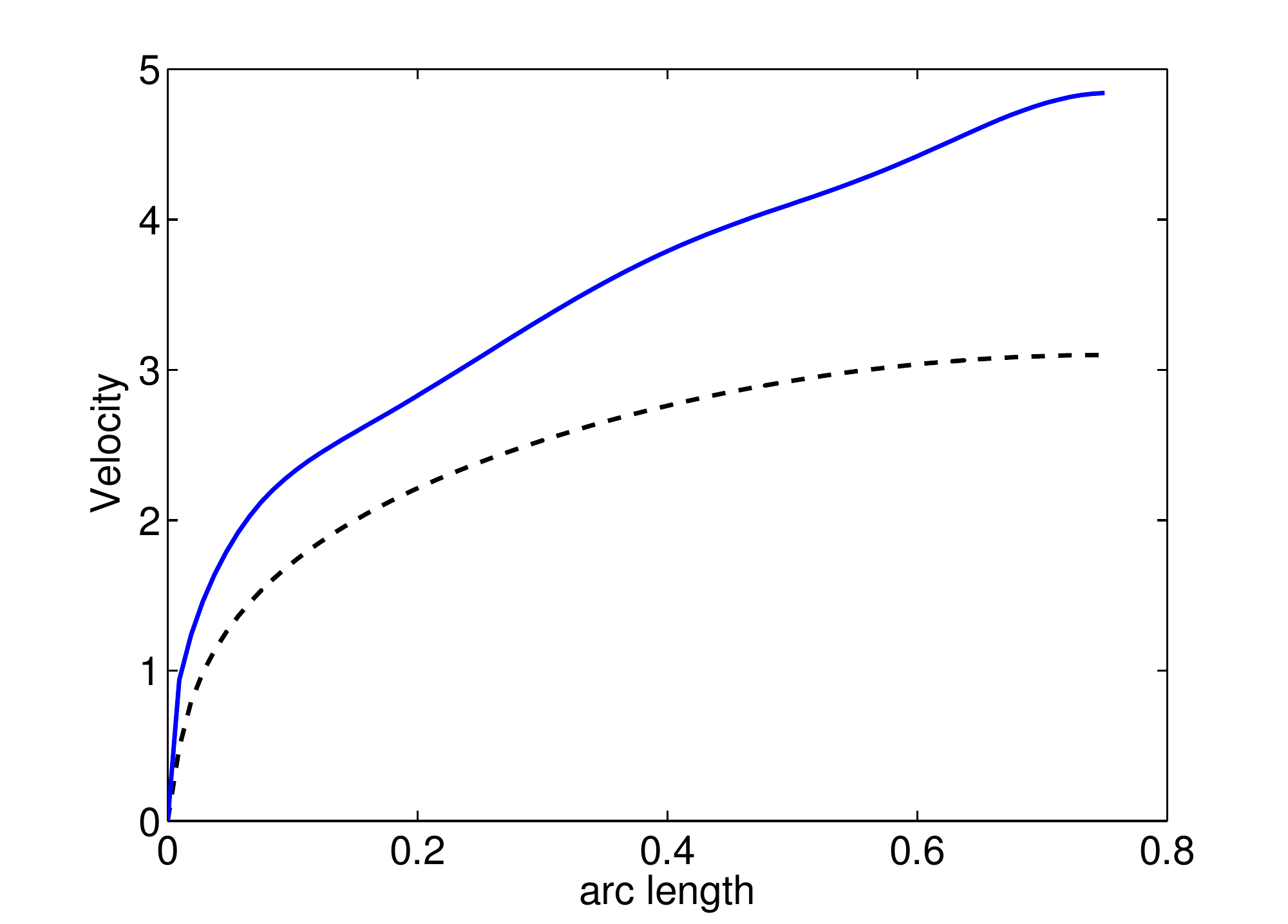}
\caption{Intensity and velocity solutions to problem \eqref{eq:control_problem_s_disc} with $\beta=10^{-1}$.
Evolution of the intensities $\{\balpha^i_\Ds\}_{i=1}^8$  for parameters $(\lambda_1,\eta_1)$ (left)
and $(\lambda_2,\eta_2)$ (center), and  velocity $\theta_\Ds$ (right) for both parameters.
The dotted line shows the velocity when
$(\lambda_1 , \eta_1)$ and the solid line corresponds to $(\lambda_2 , \eta_2)$.
The horizontal axis represents the arc length $s$ of the curve $\calC$ with $s_F=0.75$.}
\label{fig:finalT}
\end{figure}
Using the aforementioned formules  we deduce that $T_F = 0.485$
 when $(\lambda_1, \eta_1)$ and $T_F =0.337$ when $(\lambda_2,\eta_2)$.

In order to increase
the force in direction $\brho'=(1,0)^\top$ (cf.~\eqref{eq:F_s}), in principle, it is sufficient
to have only the dipole number one (see Figure~\ref{fig:domain})  with maximum intensity
$\bbalpha_1$ (cf. Figure~\ref{fig:push}, left). Indeed it is clear
from Figure~\ref{fig:finalT}~(left) that the constraint $\bbalpha_1 = \balpha^*$, is
active for certain time instances.
However, even though we can increase the magnetic force with only one dipole,
it is safe to conclude that, for this example, even if we set $\bbalpha = \balpha^*$
for all times, we do not achieve a uniform constant vector field (which is our goal).
Then the remaining dipoles, which have nonzero intensities as well, contribute to attain this.
On the contrary, Figure~\ref{fig:finalT}~(center) shows a different behavior.
Here the dipole $\bbalpha_1$ on the right has the same intensity as the dipole $\bbalpha_5$
on the left  (close to 0), whereas the dipoles with the
largest intensities are 3, 7 (at the center of $\O$) and 2, 8 (on the right of $\O$).
Notice that the penalization $\lambda_1=10^{-6}$ of the cost functional leads to fast increasing
values of dipoles intensities 1,2 and 8, which is not the case
when $\lambda_2=10^{-4}$.

\subsection{Application: Transport of a passive scalar}
Magnetic drug targeting is an important application of ferrofluids where drugs,
with ferromagnetic particles in suspension, are injected into the blood stream. The external
magnetic field thus concentrates the drug to the most relevant areas, for example, solid tumors
(see, for instance, \cite{Lubbe1996}).
We assume a concentration of magnetic nanoparticles confined in a domain $\widetilde{\Omega}\subset \R^\dd, \dd=2,3$.
Let $c$ be the drug concentration and $\bH$ the magnetic field, then the evolution
 of  $c$ by the applied magnetic field is given by the following
  advection-diffusion model \cite{GR2005}:
\begin{align}\label{eq:state_strong_1}
\dfrac{\partial c}{\partial t}+\di \left( -A\nabla c   +c \bv{u} +
  \gamma_1 cf(\bH)\right)=0 \quad \mbox{in } \widetilde{\Omega}\times(0,T)&\\
c=0\quad \mbox{on } \partial\widetilde{\Omega}\times(0,T)\qquad c(x, 0) = c_0
\quad \mbox{in } \widetilde{\Omega}&\label{eq:state_strong_2}\\
 \curl \bH=\bv{0} \quad \mbox{in } \widetilde{\Omega}\qquad   \di \del{\mu\bH}=0
 \quad \mbox{in } \widetilde{\Omega}&\label{eq:state_strong_max}
\end{align}
where  $A$ is a diffusion coefficient matrix,  $\bv{u}$ is a fixed velocity vector
and $f$ is the  \textit{Kelvin force} depending on $\bH$ (cf~\eqref{eq:force_1}).
If the  magnetic susceptibility $\chi$ is independent of $\bH$, then
$f(\bH)=\gamma_2 \mu_0  \chi \nabla|\bH|^2$ where $\gamma_1$ and
$\gamma_2$ are constitutive constants with different units and $\mu_0$
denotes the magnetic permeability.

Under the principle of magnetic drug delivery, we aim to move an initial concentration
$c_0$ of drugs from one subdomain to another (desired location) using the magnetic
force while minimizing the spreading. In this example we focus on ``magnetic injection"
of the concentration away from the boundary.
The two fundamental units that determine
the evolution of concentration $c$ are transport and diffusion (cf.~\eqref{eq:state_strong_1}).
Given the variability of the magnetic force in $\O$ (see, Figures~\ref{fig:app_ex3} (top)),
the major challenges are: to generate the appropriate magnetic
force
to move the concentration to a desired location and to control the
spreading due to the diffusion in \eqref{eq:state_strong_1}.

Indeed, we can overcome the first of these challenges by using the ``almost uniform" magnetic force
 generated using magnetic dipoles in the previous two examples. Recall that $D_t$ moves
 along a pre-specified curve $\calC$. In fact, in our computations we notice that the magnetic
 force in $D_t$ helps in minimizing the spread of $c$ as well.

To fix ideas, we set $\widetilde{\Omega}:=\Omega\subset \R^2$ be a ball of unit radius centered at $(0,0)$ and the dipole
configuration is the same as in the previous examples. We assume that $c_0$ is as in Figure~\ref{fig:app_ex4} (left),
and lies inside a ball centered at $(-0.75,0)$ with radius $0.2$ (see Figure~\ref{fig:app_ex4} (left)). We also set final time $T=1$.
For simplicity we assume $\gamma_1=\gamma_2=\mu_0 =\chi=1$, $\bv{u}=\bv{0}$. In order to
further reduce the spread we choose a small diffusion coefficient, in particular, we set
$A=\varepsilon\mathbb{I}$, with $\varepsilon=10^{-5}$.

We consider piecewise linear functions on simplicial meshes to approximate \eqref{eq:state_strong_1}--\eqref{eq:state_strong_2}. However, it is well--known that
the standard finite element method yields solution oscillations when $\varepsilon\ll |f(\bH)|$.
A possible remedy is to add an artificial term to stabilize the numerical scheme.
We use the so--called SUPG technique  (see, for instance, \cite{BH1982}).
Let $f(\bH) = \nabla|\bH|^2$ as computed  in Section~\ref{sec:example1} by solving \eqref{eq:control_problem_t_disc}
$\bvel = \bvel_1=(1,0)$. Then we solve \eqref{eq:state_strong_1}-\eqref{eq:state_strong_2} for $c$.

 Figure~\ref{fig:app_ex4} shows the evolution of the
concentration for three times instances.
\begin{figure}
\centering
\includegraphics[width=0.325\linewidth]{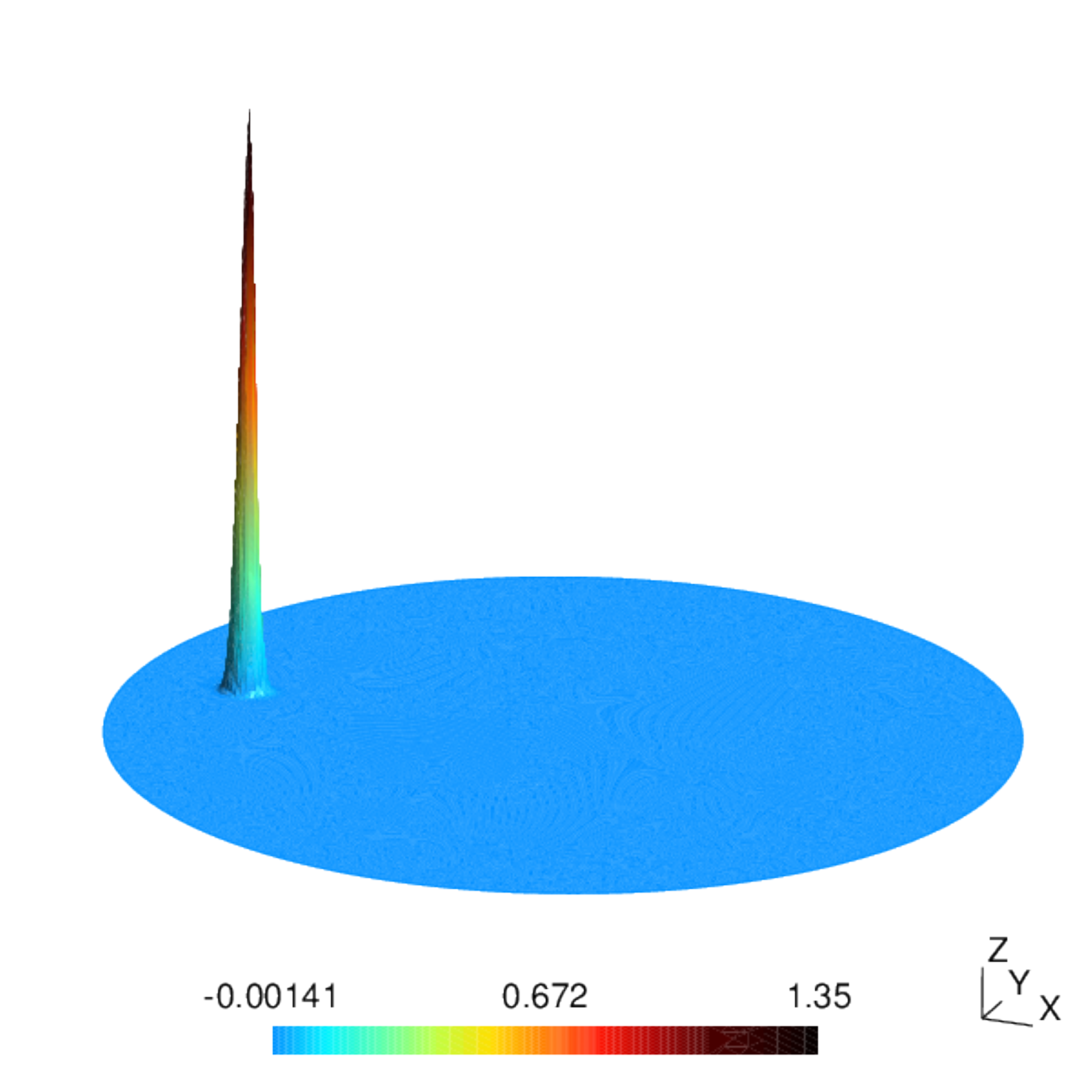}
\includegraphics[width=0.325\linewidth]{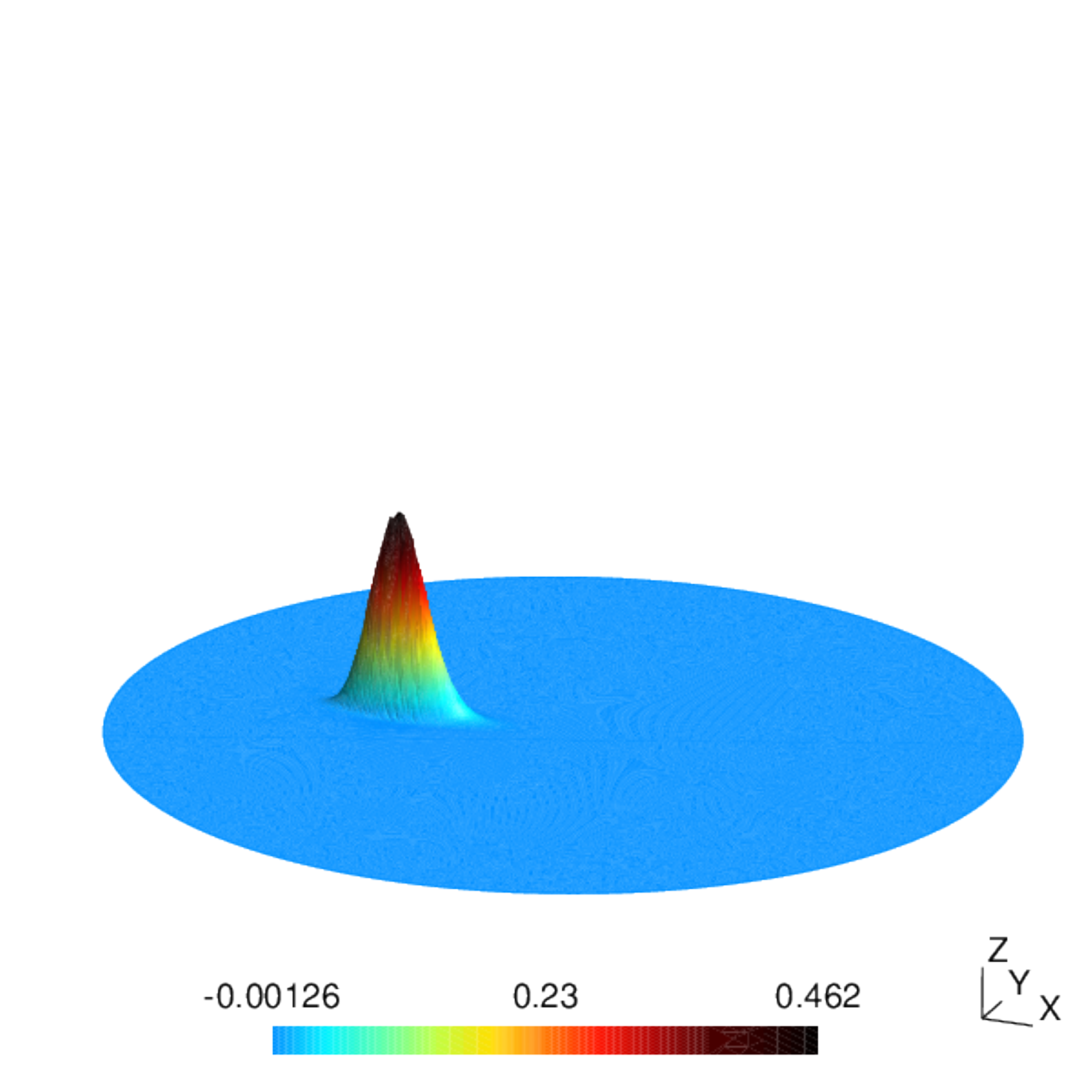}
\includegraphics[width=0.325\linewidth]{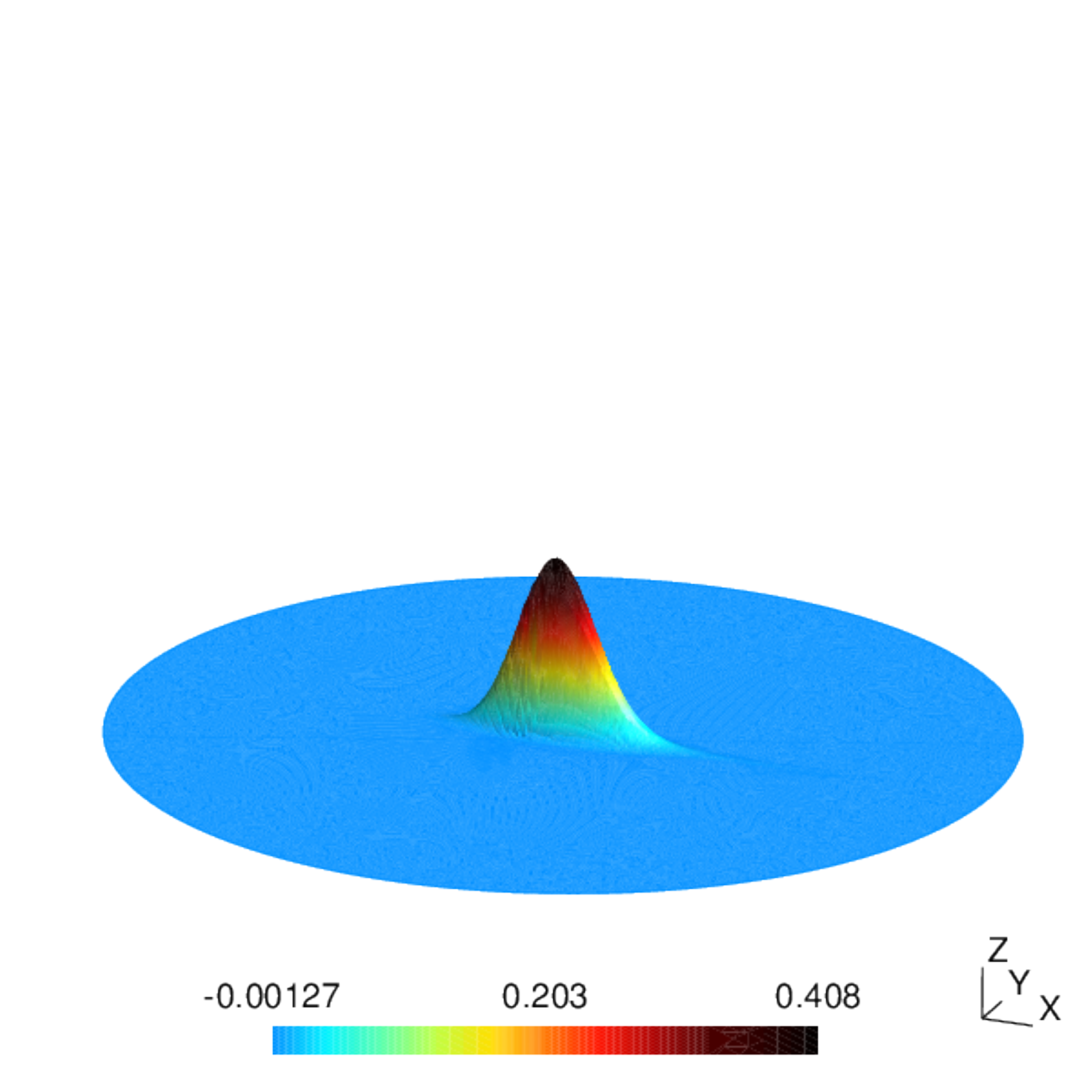}\\
\includegraphics[width=0.325\linewidth]{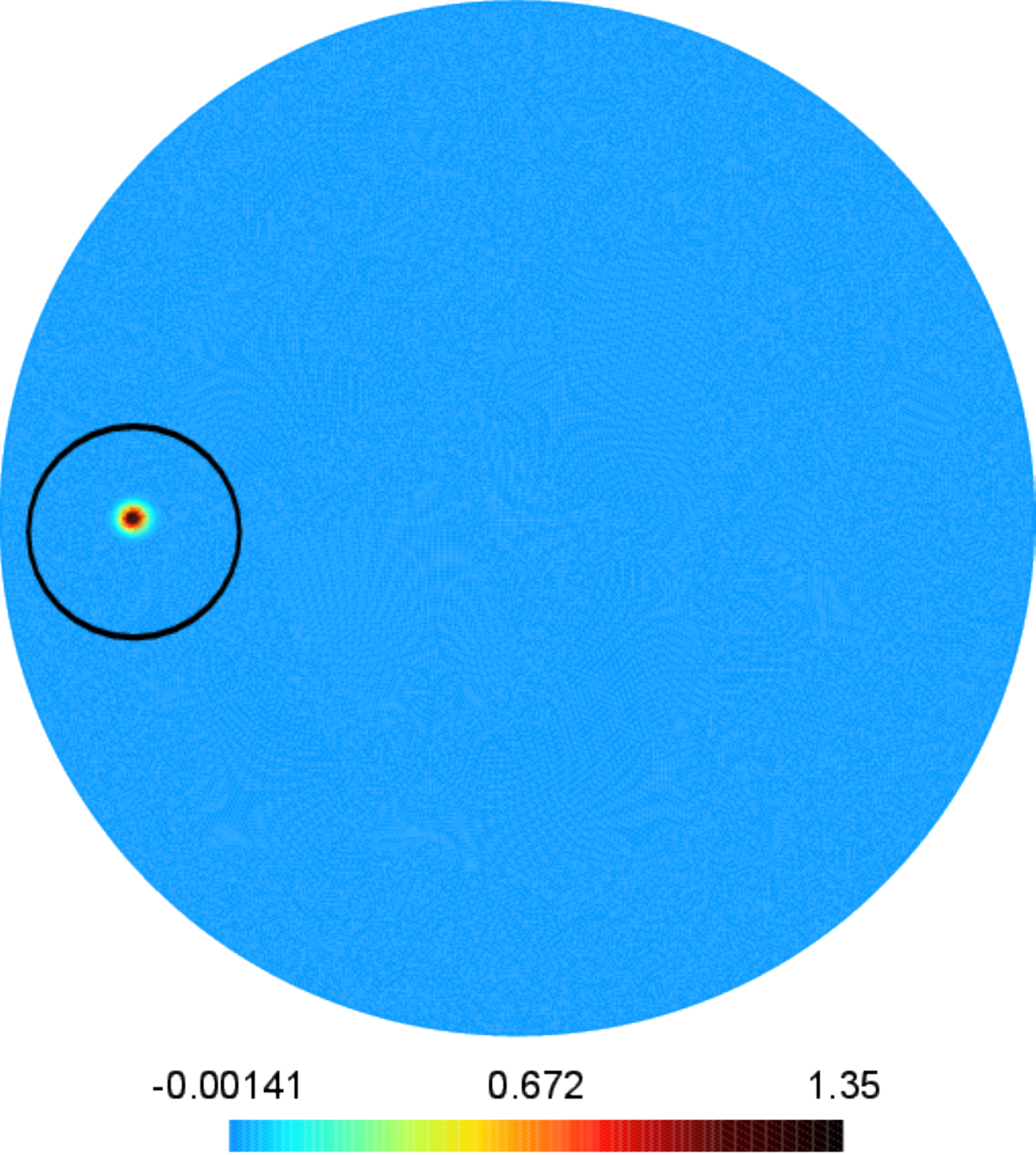}
\includegraphics[width=0.325\linewidth]{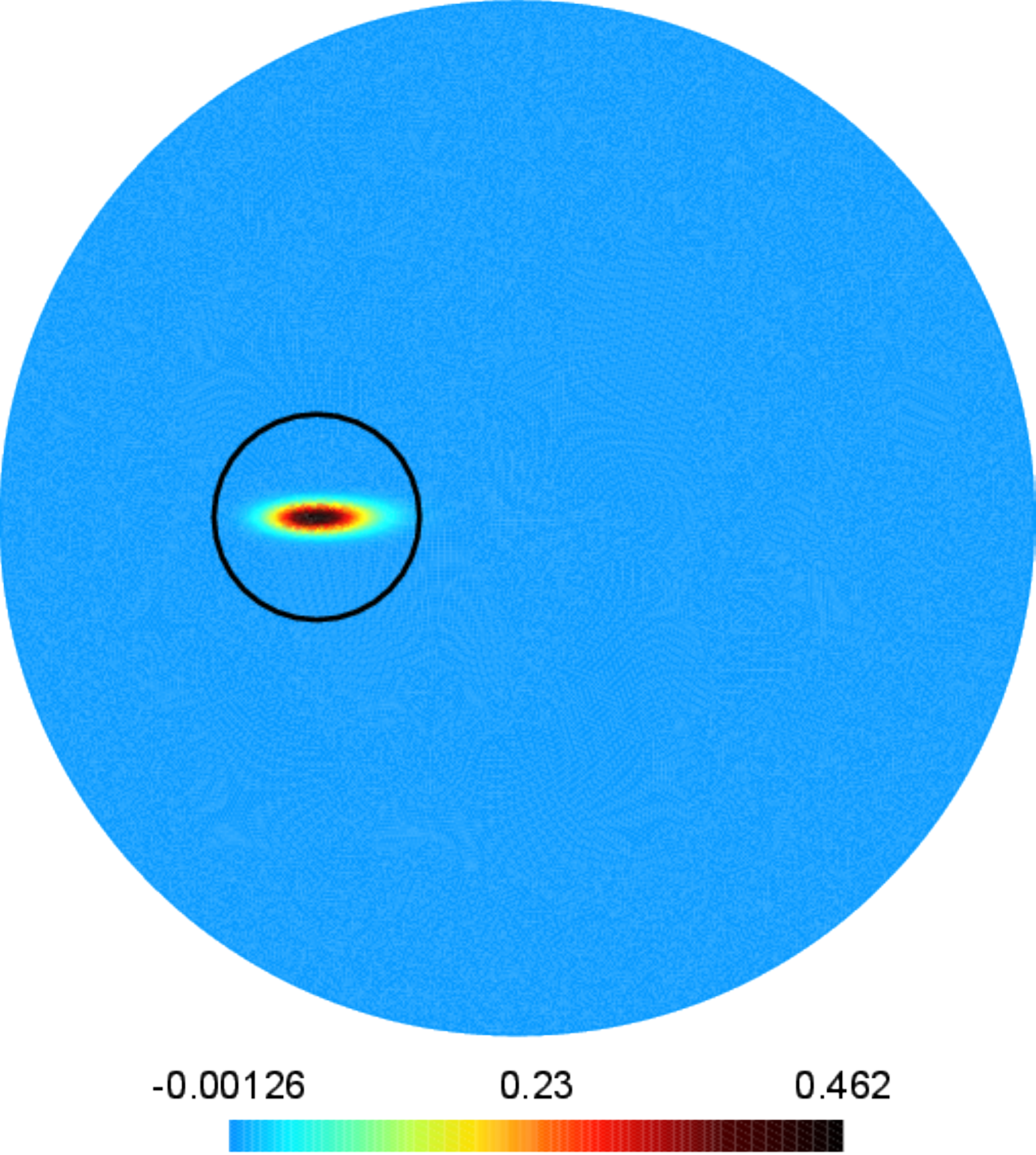}
\includegraphics[width=0.325\linewidth]{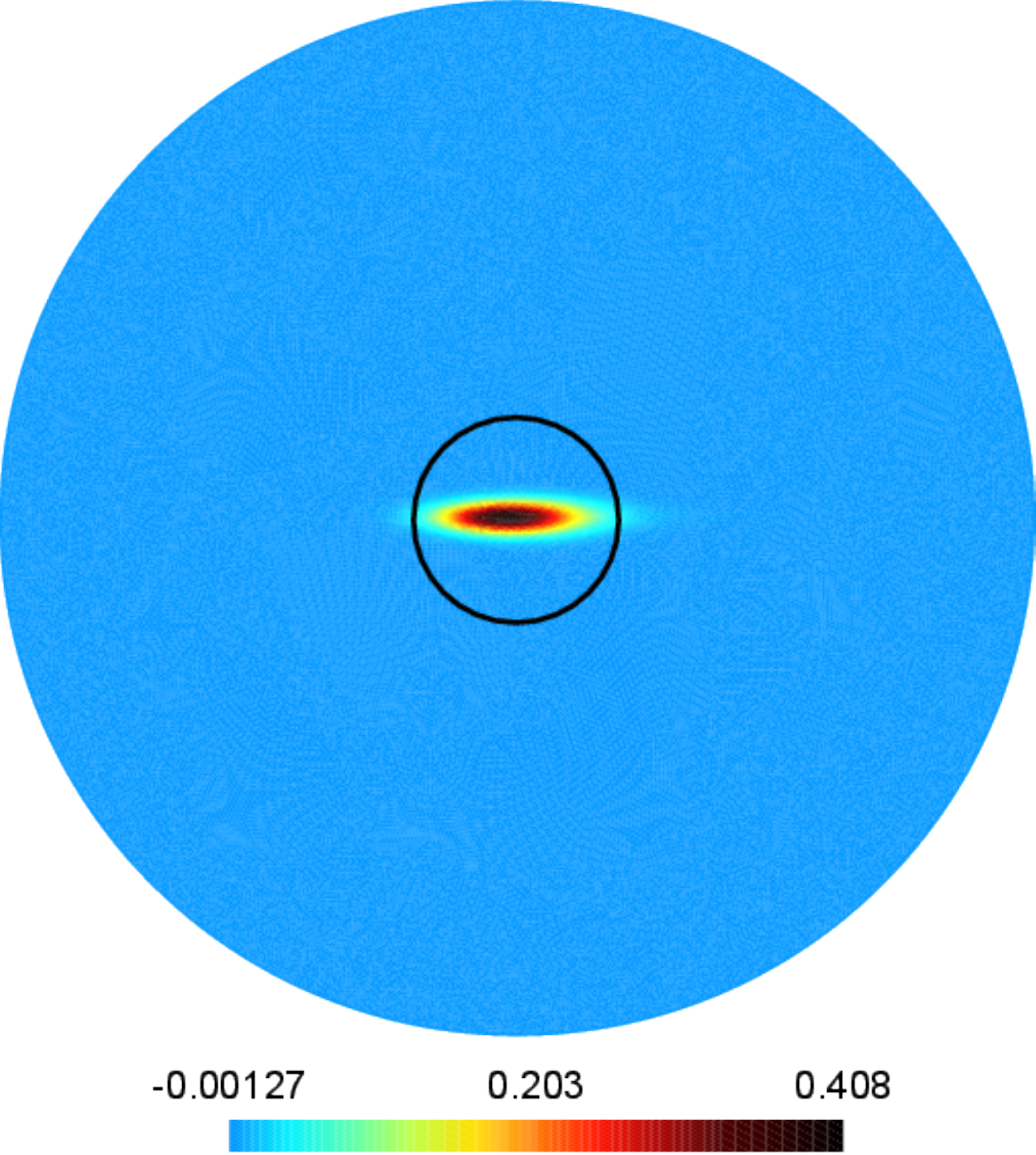}
\caption{ Evolution of the concentration in $\O$ and moving domain $D_{1,t}$ (circle) at times $t=0,0.5, 1$ s. $\eps=10^{-5}$ with two different views.}
\label{fig:app_ex4}
\end{figure}

From Figure~\ref{fig:app_ex4}, we notice that  most part of the concentration $c$ is confined in $D_{1,t}$
(denoted by the smaller circle) for all times. Indeed, our approach minimizes  spreading and prevents
concentration from reaching $\partial\Omega$.
 Otherwise, part of the concentration could  be transported to the boundary where the closest (active)
 dipole is positioned, which is not our goal. Figure~\ref{fig:app_ex4}
 illustrate that the concentration moves from the initial configuration and reaches at center of $\O$.
%

\bibliographystyle{plain}

\bibliography{Ref}

\end{document}